\title{Automorphism groups of countable algebraically closed graphs
  and endomorphisms of the random graph}
\author{Igor Dolinka\thanks{%
    Department of Mathematics and Informatics, University of Novi Sad,
    Trg Dositeja Obradovi\'{c}a~4, 21101 Novi Sad, Serbia},
  Robert~D. Gray\thanks{%
    School of Mathematics, University of East Anglia, Norwich NR4 7TJ,
    United Kingdom},
  Jillian~D. McPhee\thanks{%
    School of Mathematics \& Statistics, University of St Andrews, St
    Andrews, Fife KY16 9SS, United Kingdom},\\
  James~D. Mitchell$^{\ddagger}$ \& Martyn Quick$^{\ddagger}$}
\newtheorem{thm}{Theorem}[section]
\newtheorem{cor}[thm]{Corollary}
\newtheorem{lemma}[thm]{Lemma}
\newtheorem{prop}[thm]{Proposition}
\newtheorem{comment}[thm]{Comment}
\newcommand{\qed}{\hspace*{\fill}$\square$}
\newenvironment{proof}{%
  \begin{trivlist}\item\textsc{Proof:}}{\qed\end{trivlist}}
\newcommand{\1}{\mathbf{1}}
\newcommand{\Aut}{\operatorname{Aut}}
\newcommand{\End}{\operatorname{End}}
\newcommand{\disjunion}{\mathbin{\dot{\cup}}}
\newcommand{\disjsqunion}{\mathbin{\dot{\sqcup}}}
\newcommand{\im}{\operatorname{im}}
\newcommand{\N}{\mathbb{N}}
\newcommand{\Q}{\mathbb{Q}}
\newcommand{\Schutz}{Sch\"{u}tzen\-berger}
\newcommand{\scD}{\mathscr{D}}
\newcommand{\scH}{\mathscr{H}}
\newcommand{\scJ}{\mathscr{J}}
\newcommand{\scL}{\mathscr{L}}
\newcommand{\scR}{\mathscr{R}}
\newcommand{\Sym}{\operatorname{Sym}}
\newcommand{\nbd}{\nobreakdash-}
\newcommand{\mathbit}[1]{\mbox{\boldmath$#1$}}
\newcommand{\order}[1]{\mathopen{|}#1\mathclose{|}}
\newcommand{\set}[2]{\{\,#1\mid#2\,\}}
\newcommand{\spc}{\vspace{\baselineskip}}
\renewcommand{\geq}{\geqslant}
\renewcommand{\leq}{\leqslant}
\renewcommand{\emptyset}{\varnothing}
\newcommand{\blankfoot}[1]{%
  \xdef\@thefnmark{}\@footnotetext{#1}}
\begin{document}

\maketitle

\blankfoot{\emph{Keywords:} existentially closed graphs, algebraically
  closed graphs, random graph, endomorphism monoid, countable universal
  graph, countable universal bipartite graph}
\blankfoot{\emph{MSC:} 05C25, 03C50, 20M20, 20B27}

\begin{abstract}
  We establish links between countable algebraically closed graphs and
  the endomorphisms of the countable universal graph~$R$.  As a
  consequence we show that, for any countable graph~$\Gamma$, there
  are uncountably many maximal subgroups of the endomorphism monoid
  of~$R$ isomorphic to the automorphism group of~$\Gamma$.  Further
  structural information about~$\End R$ is established including that
  $\Aut \Gamma$~arises in uncountably many ways as a \Schutz\ group.
  Similar results are proved for the countable universal directed
  graph and the countable universal bipartite graph.
\end{abstract}

\section{Introduction}

Existentially closed relational structures have been widely considered
with the example of the \emph{countable universal homogeneous graph}
(also known as the \emph{random graph} or the \emph{Rado graph})
probably the most studied (see, for one example of a
survey,~\cite{CameronSurvey}).  It was established by
Truss~\cite{Truss} that the automorphism group of the countable
universal homogeneous graph is simple and this was placed in a general
setting by Macpherson and Tent~\cite{MacT}.  The work in the present paper
arose when attempting to establish what can be said about other
naturally arising groups acting (in some sense) upon the countable
universal graph~$R$.  To be more precise, we present information about
the maximal subgroups of the endomorphism monoid of~$R$.  We note that
this is not the first work to focus on endomorphisms in the context of
homogeneous structures.  For example, Cameron and
Ne\v{s}et\v{r}il~\cite{CamNes} consider homomorphism-homogeneous
structures and there are various links between their results and our
work, particularly \cite[Section~2]{CamNes}.  More recently, Lockett
and Truss~\cite{LocTru} examine generic endomorphisms of homogeneous
structures and in their concluding remarks propose that there should
be a counterpart to the literature on automorphism groups of such
structures applying to the monoids of endomorphisms.  This paper may
be thought of as part of the study suggested by Lockett and Truss.

A maximal subgroup of the endomorphism monoid of~$R$ is determined by
the idempotent endomorphism that plays the role of its identity
element.  Indeed, it is the $\scH$\nbd class of that endomorphism, as
we summarise in Section~\ref{sec:prelims} below.  Bonato and
Deli\'{c}~\cite[Proposition~4.2]{BonDel} show that the images of
idempotent endomorphisms of the countable universal graph are
characterised as being \emph{algebraically closed} (a property weaker
than existentially closed) and this is discussed in detail by
Dolinka~\cite[especially Theorem~3.2]{Dolinka1}.  We make the same
observation and also the corresponding result for images of idempotent
endomorphisms of the countable universal directed graph and countable
universal bipartite graph in the course of our work.  Indeed, we
observe that there are, except for one case,
$2^{\aleph_{0}}$~idempotent endomorphisms with image isomorphic to a
given algebraically closed graph, directed graph, or bipartite graph
(Theorems~\ref{thm:graph-idemp}, \ref{thm:directed-idemp}
and~\ref{thm:bipartite-idemp}, respectively).  These observations are,
however, merely the first steps in establishing the results herein.

We shall establish the same types of theorem for the classes of graphs
(that is, undirected graphs), directed graphs, and bipartite graphs.
These classes of relational structure are treated in turn in separate
sections below.  The proofs for (undirected) graphs are the archetypes
and so the sections relating to directed graphs and bipartite graphs
are concerned mostly with explaining what modifications are required
to establish the analogous results.  One needs particular care with
bipartite graphs, in the first instance to ensure that the correct
definition is chosen so that the class of bipartite graphs does indeed
have a Fra\"{\i}ss\'{e} limit, as noted in~\cite{Evans}.  However, a
second wrinkle occurs since there are examples of algebraically closed
bipartite graphs that are finite (for example, the complete bipartite
graph~$K_{m,n}$ on two parts of cardinality $m$~and~$n$ respectively),
unlike the situation for graphs and directed graphs where
algebraically closed structures are necessarily infinite, and this has
some surprising consequences for our results (compare
Theorems~\ref{thm:bipartite-idemp} and~\ref{thm:bipartite-regLR} with
their graph analogues).  We therefore need to introduce a stronger
condition, that we term \emph{strongly algebraically closed}, in order
to establish some of the analogues for the countable universal
bipartite graph.  These issues are discussed in detail in
Section~\ref{sec:bipartite}.  Homogeneous bipartite graphs were, for
example, also considered by Goldstern, Grossberg and
Kojman~\cite{GGK}, but they only permit what we term part-fixing
automorphisms whereas our automorphisms will be allowed to interchange
the parts.

In the summary of our results that follows, we use the term ``any
group'' to mean a group isomorphic to the automorphism group of a
countable graph.  The extension of Frucht's Theorem~\cite{Frucht} to
infinite groups established by de Groot~\cite{deGroot} and by
Sabidussi~\cite{Sabidussi} tells us this includes every countable
group.  We note in the course of our work that this class of groups is
the same as those arising as the automorphism group of countable
directed graphs (Proposition~\ref{prop:directed-automs}) and of
countable bipartite graphs (Theorem~\ref{thm:bipartite-automs}).

Let $\mathcal{C}$~denote either the class of countable graphs,
countable directed graphs, or countable bipartite graphs and let
$\Omega$~denote the universal homogeneous structure in~$\mathcal{C}$.
Then
\begin{itemize}
\item any group arises in $2^{\aleph_{0}}$~ways as the automorphism
  group of an algebraically closed structure in~$\mathcal{C}$
  (Theorems~\ref{thm:ac-graphs}, \ref{thm:ac-directed}
  and~\ref{thm:ac-bipartite});
\item any group arises in $2^{\aleph_{0}}$~ways as a maximal subgroup
  of the endomorphism monoid of~$\Omega$
  (Theorems~\ref{thm:R-reg-classes}, \ref{thm:D-reg-classes}
  and~\ref{thm:B-reg-classes});
\item any group arises in $2^{\aleph_{0}}$~ways as the \Schutz\ group
  of a non-regular $\scH$\nbd class in the endomorphism monoid
  of~$\Omega$ (Theorems~\ref{thm:R-uncountSchutz},
  \ref{thm:D-uncountSchutz} and~\ref{thm:B-uncountSchutz}).
\end{itemize}
The maximal subgroups of the endomorphism monoid of~$\Omega$ are the
group $\scH$\nbd classes of regular $\scD$\nbd classes of~$\End
\Omega$ (that is, $\scH$\nbd classes that inherit the structure of a
group from~$\End \Omega$).  For general $\scH$\nbd classes (including
all those in $\scD$\nbd classes that are not regular), there is an
alternative group that one can use instead.  This is the
\Schutz\ group referred to above (and which we expand upon in
Section~\ref{sec:prelims}) and generalises the concept of a group
$\scH$\nbd class (not least because the \Schutz\ group is isomorphic
to the $\scH$\nbd class when the latter happens to be a group).

Theorems~\ref{thm:R-reg-classes}, \ref{thm:D-reg-classes}
and~\ref{thm:B-reg-classes} say more about the structure of the
endomorphism monoid of~$\Omega$, namely every group arises as a group
$\scH$\nbd class in $2^{\aleph_{0}}$~many $\scD$\nbd classes and,
except for one case for the countable universal bipartite graph, every
regular $\scD$\nbd class contains $2^{\aleph_{0}}$ group $\scH$\nbd
classes.  From the first of these facts, it follows there are
$2^{\aleph_{0}}$ regular $\scD$\nbd classes in~$\End \Omega$.  We also
describe how many $\scL$- and $\scR$\nbd classes there are
(usually~$2^{\aleph_{0}}$) in each of these regular $\scD$\nbd classes
(see Theorems~\ref{thm:graph-regLR}, \ref{thm:directed-regLR}
and~\ref{thm:bipartite-regLR}, the latter containing the exceptions
and illustrating the surprising behaviour of the countable universal
bipartite graph).  For non-regular $\scD$\nbd classes, we observe in
Theorems~\ref{thm:graph-nonregLR}, \ref{thm:directed-nonregLR}
and~\ref{thm:bipartite-nonregLR} that there exist non-regular
injective endomorphisms with specified image and whose $\scD$\nbd
class contains both $2^{\aleph_{0}}$~many $\scL$- and $\scR$\nbd
classes.  By varying the image, we shall deduce there are
$2^{\aleph_{0}}$ non-regular $\scD$\nbd classes in the endomorphism
monoid of~$\Omega$.

A number of questions remain about the endomorphism monoid of each of
our universal structures.  For example, is it true that every
$\scD$\nbd class of the endomorphism monoid of the countable universal
graph contains $2^{\aleph_{0}}$~many $\scL$- and $\scR$\nbd classes?
This question has a positive answer for regular $\scD$\nbd classes
(Theorem~\ref{thm:graph-regLR}) and some of the non-regular $\scD$\nbd
classes (by Theorem~\ref{thm:graph-nonregLR}).  It is unclear whether
the latter can be extended to all non-regular $\scD$\nbd classes.  One
reason for the difficulty in making further progress is that we have a
necessary condition for endomorphisms to be $\scD$\nbd related in
terms of the isomorphism class of the images (in
Lemma~\ref{lem:class-facts}(iii) below) but only for regular
endomorphisms can we reverse the condition to be also sufficient.

One could also consider endomorphisms of the countable universal
linearly ordered set (that is, the rationals~$\Q$ under~$\leq$) or the
countable universal partially ordered set.  Indeed, the third author's
PhD thesis~\cite{JayThesis} contains information, including an
analogue of Theorem~\ref{thm:R-reg-classes}, about~$\End(\Q,\leq)$.
The methods are inevitably a little different and this will appear in
a subsequent publication.

\section{Preliminaries}
\label{sec:prelims}

In this section, we establish the terminology used throughout the
paper.  We summarise the basic facts about relational structures,
including what it means for them to be algebraically closed, and the
semigroup theory needed when discussing their endomorphism monoids.

\spc

A \emph{relational structure} is a pair $\Gamma = (V,\mathcal{E})$
consisting of a non-empty set~$V$ and a sequence $\mathcal{E} =
(E_{i})_{i \in I}$ of relations on~$V$.  In general, one permits
the~$E_{i}$ to have arbitrary arity, but as we are principally
concerned with (various types of) graphs it will be sufficient to deal
only with binary relations.  For convenience then we shall make this
assumption throughout.  When $\Gamma$~is a graph, we shall then also
call~$V$ the set of \emph{vertices} of~$\Gamma$.  The definitions of
graph, directed graph and bipartite graph with this viewpoint are
given at the beginning of
Sections~\ref{sec:graph}--\ref{sec:bipartite}, respectively.  A
\emph{relational substructure} of~$\Gamma$ is a relational structure
$\Delta = (U,\mathcal{D})$, where $U$~is a non-empty subset of~$V$ and
where $\mathcal{D} = (D_{i})_{i \in I}$ satisfies $D_{i} \subseteq
E_{i}$ for all~$i$.  If $U$~is a subset of~$V$, we write $\langle U
\rangle$ for the substructure~$(U,\mathcal{D})$ where $\mathcal{D} =
(D_{i})_{i \in I}$ is defined by $D_{i} = E_{i} \cap (U \times U)$ for
each~$i$.  We shall call~$\langle U \rangle$ the \emph{relational
  substructure induced by~$U$}.

If $\Gamma = (V,(E_{i})_{i \in I})$ and $\Delta = (W,(F_{i})_{i \in
  I})$ are relational structures (with relations indexed by the same
set~$I$), a \emph{homomorphism} $f \colon \Gamma \to \Delta$ is a map
$f \colon V \to W$ such that $(uf,vf) \in F_{i}$ whenever $(u,v) \in
E_{i}$.  The map $f \colon V \to W$ then induces $f \colon E_{i} \to
F_{i}$, for each $i \in I$, and we call the substructure $\im f =
(Vf,\mathcal{E}f)$, where $\mathcal{E}f = (E_{i}f)_{i \in I}$,
of~$\Delta$ the \emph{image} of~$f$.  We define the \emph{kernel}
of~$f$ to be the relation $\set{(u,v)}{uf = vf}$ on the vertex
set~$V$.  An \emph{embedding} is an injective homomorphism $f \colon
\Gamma \to \Delta$ such that, for each~$i$, \ $(u,v) \in E_{i}$ if and
only if $(uf,vf) \in F_{i}$.

\spc

In order to describe what it means for a relational structure to be
algebraically closed we shall need a little model theory.  We refer to
Hodges~\cite{Hodges} for the basic terminology.

Let $L$~be a signature and $\mathbf{K}$~be a class of $L$\nbd
structures.  A structure~$A$ in~$\mathbf{K}$ is called
\emph{algebraically closed} (in~$\mathbf{K}$) if given a
formula~$\Phi(\mathbit{x})$ of the form
\begin{equation}
  (\exists \mathbit{y}) \bigwedge_{i=1}^{k}
  \Psi_{i}(\mathbit{x},\mathbit{y}),
  \label{eq:ppf}
\end{equation}
where $k \in \N$ and each~$\Psi_{i}$ is an atomic formula, and a
finite sequence~$\mathbit{a}$ of elements of~$A$ such that there
exists an extension~$A'$ of~$A$ with $A' \models \Phi(\mathbit{a})$,
then already $A \models \Phi(\mathbit{a})$.  (As an aside, we mention
that the formula~$\Phi(\mathbit{x})$ given in~\eqref{eq:ppf} is called
a \emph{positive primitive formula}, see, for
example,~\cite[page~50]{Hodges}.)  In certain cases, this definition
of algebraic closure can often be simplified.  For example, it is easy
to see that a graph~$\Gamma$ is algebraically closed if and only if,
given any finite set~$A$ of vertices in~$\Gamma$, there exists some
vertex~$w$ that is adjacent to every one of the vertices in~$A$.  We
shall similarly interpret below what algebraically closed means for
directed and bipartite graphs in Sections~\ref{sec:directed}
and~\ref{sec:bipartite}.  The concept of an \emph{existentially
  closed} relational structure is defined similarly but for this we
permit each~$\Psi_{i}$ to be an atomic formula or its negation.

Algebraically closed structures for our classes of relational
structures can be characterised as follows.  Part~(i) of this
result is~\cite[Proposition~2.1(a)]{CamNes}, which is established
by a back-and-forth argument.  The proof is easily adjusted to cover
directed graphs and bipartite graphs, though one necessarily needs to
use the strongly algebraically closed condition for the latter.  This
condition is defined in Section~\ref{sec:bipartite} just before
Theorem~\ref{thm:ac-bipartite} where it is first used.

\begin{prop}
  \label{prop:characterise-ac}
  \begin{enumerate}
  \item Let $\Gamma = (V,E)$~be a countable graph or directed graph.
    Then $\Gamma$~is algebraically closed (in the class of graphs or
    directed graphs, respectively) if and only if there exists $F
    \subseteq E$ such that $(V,F)$~is existentially closed.
  \item Let $\Gamma = (V,E,P)$~be a countable bipartite graph.  Then
    $\Gamma$~is strongly algebraically closed if and only if there
    exists $F \subseteq E$ such that $(V,F,P)$ is existentially closed.
  \end{enumerate}
\end{prop}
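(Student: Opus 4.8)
The plan is to prove both directions by exploiting the fact, noted in the excerpt, that for graphs ``algebraically closed'' simplifies to: every finite set of vertices has a common neighbour. (The directed and bipartite cases will need the analogous simplifications promised for Sections~\ref{sec:directed} and~\ref{sec:bipartite}, together with the strongly algebraically closed condition in the bipartite case, but the shape of the argument is the same, so I would write out the graph case in full and remark that the others follow \emph{mutatis mutandis}.)

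For the direction ``$\exists F \subseteq E$ with $(V,F)$ existentially closed $\Rightarrow$ $(V,E)$ algebraically closed'': first I would observe that every existentially closed structure is algebraically closed (dropping the negated atomic formulae from a defining formula of the form~\eqref{eq:ppf} can only make witnesses easier to find; formally, an extension realising~$\Phi(\mathbit{a})$ together with existential closedness of $(V,F)$ forces $(V,F)\models\Phi(\mathbit{a})$). Then I would show algebraic closedness is inherited when we enlarge the edge set from $F$ to $E\supseteq F$: a positive primitive formula~$\Phi$ involves only atomic formulae, i.e.\ demands of the form ``$x_j$~is related to~$x_\ell$'' (and equalities), so if $(V,F)$ already satisfies $\Phi(\mathbit a)$ with witnesses $\mathbit b$ then the \emph{same} witnesses work in $(V,E)$ because each relation has only grown. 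Hence it suffices to check the ``extension exists'' hypothesis is not disturbed, which is immediate since $(V,E)$ and $(V,F)$ have the same vertex set; so $(V,E)$ is algebraically closed.

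For the converse, suppose $\Gamma=(V,E)$ is algebraically closed. I would build $F\subseteq E$ by a back-and-forth / step-by-step construction enumerating the ``deficiencies'': since $V$ is countable, enumerate all pairs $(A,B)$ of finite disjoint subsets of~$V$ for which we want a witness vertex adjacent to everything in~$A$ and to nothing in~$B$ (this is exactly what existential closedness of a graph requires, after the usual simplification). At stage~$n$, given the finite partial edge set decided so far, I use algebraic closedness of~$\Gamma$ applied to the positive primitive formula asserting the existence of a vertex adjacent to all of $A_n$: such a vertex~$w$ exists \emph{in~$\Gamma$} (here the point is that $\Gamma$ is algebraically closed and the extension demanded by the definition is witnessed, e.g., inside $\Gamma$ itself or in any existentially closed extension of $\Gamma$, which exists by Fra\"{\i}ss\'e-type amalgamation). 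I then declare the edges from~$w$ to~$A_n$ to lie in~$F$ and, crucially, declare the (finitely many) potential edges from~$w$ to~$B_n$ \emph{not} to lie in~$F$; finitely many other edges are left undecided for later stages, and no conflict arises because at each stage only finitely many pairs are constrained and $F$ is only ever being thinned relative to~$E$, never thickened. Taking the union over all stages yields $F\subseteq E$ with $(V,F)$ existentially closed.

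The main obstacle, and the step I would present most carefully, is the bookkeeping in the converse construction: one must simultaneously (a) realise, for each finite disjoint pair $(A,B)$, a genuine witness vertex, and (b) preserve the commitments made at earlier stages, all while ensuring $F\subseteq E$. The subtlety is that algebraic closedness only gives a vertex adjacent to \emph{all of~$A$} with \emph{no control} over its adjacencies to~$B$; one must choose the witness vertex to be ``fresh'' (not among the finitely many vertices touched by earlier commitments and not forced by them into an edge with some element of~$B$) so that the non-edges to~$B$ can be imposed in~$F$ without contradicting~$E$. For graphs and directed graphs this freshness is easy because algebraically closed such structures are infinite, so infinitely many candidate witnesses are available at each stage; for bipartite graphs this is precisely why the stronger ``strongly algebraically closed'' hypothesis is needed (the plain condition can be satisfied by finite graphs like~$K_{m,n}$, leaving no room for fresh witnesses), and I would flag that the part~(ii) argument differs from part~(i) only in invoking strong algebraic closedness at exactly this point.
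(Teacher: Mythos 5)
Your proof is correct and follows essentially the same route as the paper, which writes out no proof of its own but attributes part~(i) to \cite[Proposition~2.1(a)]{CamNes}, describes it as a back-and-forth argument, and notes that the adjustment for bipartite graphs consists precisely in invoking the strongly algebraically closed condition where you do. The one point worth tightening is your justification of ``infinitely many candidate witnesses'': this follows not from the structure merely being infinite but from applying algebraic closure to $A_n$ together with the finitely many already-committed vertices (irreflexivity then forces the new witness to lie outside that set), and the failure of exactly this trick across the two parts of a bipartite graph is why strong algebraic closure must be assumed there instead.
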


The classes of finite graphs, of finite directed graphs and of finite
bipartite graphs each possess what is known as the hereditary
property, the joint embedding property and the amalgamation property.
(Indeed, the reason for our particular way of defining the term
bipartite graph below is to ensure that the class of such graphs has
these properties.)  Consequently, each class has a unique
Fra\"{\i}ss\'{e} limit~\cite{Fraisse}, referred to as the
\emph{countable universal homogeneous structure} of the class (see,
for example,~\cite[Theorem~6.1.2]{Hodges}).  We shall follow
Truss~\cite{Truss} and others and abbreviate the terminology to refer
to the \emph{countable universal graph}, the \emph{countable universal
  directed graph}, and the \emph{countable universal bipartite
  graph}.  Furthermore, these Fra\"{\i}ss\'{e} limits are the unique
countable existentially closed structures in the classes of graphs, of
directed graphs, and of bipartite graphs
(see~\cite[page~185]{Hodges}).  The following is now an immediate
corollary of Proposition~\ref{prop:characterise-ac} and is used in the
proofs of Theorems~\ref{thm:graph-nonregLR},
\ref{thm:directed-nonregLR} and~\ref{thm:bipartite-nonregLR}.

\begin{cor}\label{cor:hom-from-random}
  Let $\Gamma$~be countable and either an algebraically closed graph,
  algebraically closed directed graph, or strongly algebraically
  closed bipartite graph.  Let $\Omega$~be, correspondingly, the
  countable universal graph, countable universal directed graph, or
  countable universal bipartite graph.  Then there is a homomorphism
  from~$\Omega$ into~$\Gamma$ given by a bijection between the
  vertices. \qed
\end{cor}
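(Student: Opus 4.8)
The plan is to combine Proposition~\ref{prop:characterise-ac} with the uniqueness of the countable existentially closed structure in each of the three classes, which was recorded in the paragraph immediately preceding this corollary. First I would take the countable algebraically closed graph or directed graph~$\Gamma = (V,E)$ (respectively, the strongly algebraically closed bipartite graph $\Gamma = (V,E,P)$) and apply the appropriate part of Proposition~\ref{prop:characterise-ac} to obtain $F \subseteq E$ such that the structure $\Gamma_{0} = (V,F)$ (respectively, $\Gamma_{0} = (V,F,P)$) is existentially closed. Since $\Gamma_{0}$ is a countable existentially closed structure lying in the relevant class (graphs, directed graphs or bipartite graphs), it must be isomorphic to the Fra\"{\i}ss\'{e} limit~$\Omega$ of that class. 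Fix an isomorphism $\theta \colon \Omega \to \Gamma_{0}$; in particular $\theta$ is a bijection between the two vertex sets.

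It then remains only to observe that $\theta$, now regarded as a map from~$\Omega$ into~$\Gamma$, is a homomorphism. This is immediate from the inclusion $F \subseteq E$: if $(a,b)$ is an edge of~$\Omega$ then $(a\theta,b\theta)$ is an edge of~$\Gamma_{0}$, hence belongs to~$F$, hence is an edge of~$\Gamma$. In the bipartite case $\theta$ additionally preserves the part structure, since it does so as an isomorphism onto~$\Gamma_{0}$ and $\Gamma_{0}$ carries the same relation~$P$ as~$\Gamma$. Thus $\theta \colon \Omega \to \Gamma$ is a homomorphism given by a bijection between the vertex sets, as required.

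I do not expect any real obstacle here: essentially all of the content is carried by Proposition~\ref{prop:characterise-ac} together with the standard fact that the countable universal homogeneous structure is the \emph{unique} countable existentially closed structure in its class. The only point needing a moment's care is the bipartite case, where one must be sure that the relation~$F$ supplied by Proposition~\ref{prop:characterise-ac}(ii) is extracted with the bipartition relation~$P$ held fixed, so that both the isomorphism $\theta$ with~$\Omega$ and the resulting homomorphism into~$\Gamma$ respect~$P$.
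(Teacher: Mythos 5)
Your argument is correct and is precisely the one the paper intends: the corollary is stated as an immediate consequence of Proposition~\ref{prop:characterise-ac} together with the uniqueness of the countable existentially closed structure in each class, and your write-up simply makes that deduction explicit, including the (correct) observation that the inclusion $F \subseteq E$ turns the isomorphism $\Omega \to (V,F)$ into a homomorphism $\Omega \to \Gamma$ on the same vertex bijection.
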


Since we shall be concerned with maximal subgroups (that is, the group
$\scH$\nbd classes) of endomorphism monoids, we need to recall Green's
relations and their properties.  We refer to Howie's
monograph~\cite{Howie} for a general background on semigroups.

Let $M = \End \Gamma$ be the endomorphism monoid of a relational
structure $\Gamma = (V,\mathcal{E})$.  Two elements $f$~and~$g$ of~$M$
are \emph{$\scL$\nbd related} if $f$~and~$g$ generate the same left
ideal (that is, $Mf = Mg$), while they are \emph{$\scR$\nbd related}
if $fM = gM$.  Green's \emph{$\scH$\nbd relation} is the intersection
of the binary relations $\scL$~and~$\scR$, while the \emph{$\scD$\nbd
  relation} is their composite~$\scL \circ \scR$ (which can be shown
also to be an equivalence relation).  Finally, but less central to our
work, $f$~and~$g$ are \emph{$\scJ$\nbd related} if $MfM = MgM$.  We
shall use the notation $f \scL g$ to denote that $f$~and~$g$ are
$\scL$\nbd related and similarly for the other relations.  If $f \in
M$, we write~$H_{f}$ for the $\scH$\nbd class of~$f$.  If $e$~is an
idempotent in~$M$ (that is, $e^{2} = e$), the $\scH$\nbd class~$H_{e}$
is a subgroup of~$M$ \cite[Corollary~2.2.6]{Howie} and the maximal
subgroups of our monoid~$M$ are precisely the $\scH$\nbd classes of
idempotents of~$M$.

The $\scL$-, $\scR$- and $\scD$\nbd classes in the full transformation
monoid~$\mathcal{T}_{V}$, of all maps~$V \to V$, are fully described
in terms of the images and kernels of the maps involved
(see~\cite[Exercise~2.6.16]{Howie}).  We may view the endomorphism
monoid~$M$ of~$\Gamma = (V,\mathcal{E})$ as a submonoid
of~$\mathcal{T}_{V}$ and if $f$~and~$g$ are, for example, $\scL$\nbd
related in~$\End \Gamma$, they are certainly $\scL$\nbd related
in~$\mathcal{T}_{V}$.  Consequently, parts (i)~and~(ii) of the
following lemma follow immediately.

\begin{lemma}
  \label{lem:class-facts}
  Let $f$~and~$g$ be endomorphisms of the relational structure $\Gamma
  = (V,\mathcal{E})$.
  \begin{enumerate}
  \item If $f$~and~$g$ are $\mathscr{L}$\nbd related, then $Vf = Vg$.
  \item If $f$~and~$g$ are $\mathscr{R}$\nbd related, then $\ker f =
    \ker g$.
  \item If $f$~and~$g$ are $\mathscr{D}$\nbd related, then the induced
    substructures $\langle Vf \rangle$~and~$\langle Vg \rangle$ are
    isomorphic.
  \end{enumerate}
\end{lemma}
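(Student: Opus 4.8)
The plan is to deduce (i) and~(ii) from the corresponding facts in the full transformation monoid~$\mathcal{T}_{V}$ recorded just before the statement, and then to derive~(iii) from these. For~(i): writing $M = \End \Gamma$, the equality $Mf = Mg$ yields $f = mg$ for some $m \in M$, so $Vf = (Vm)g \subseteq Vg$, and symmetrically $Vg \subseteq Vf$; equivalently, $f$~and~$g$ are $\scL$\nbd related in~$\mathcal{T}_{V}$, where $\scL$\nbd related maps are exactly those with equal image. Part~(ii) is the dual statement: $fM = gM$ gives $f = gn$ and $g = fn'$ with $n, n' \in M$, whence $\ker g \subseteq \ker f$ and $\ker f \subseteq \ker g$.

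For~(iii), I would use the identity $\scD = \scL \circ \scR$: if $f \,\scD\, g$ there is $h \in M$ with $f \,\scL\, h$ and $h \,\scR\, g$. By~(i) applied to $f$~and~$h$, we have $Vf = Vh$, and since the relational substructure induced by a given vertex set is uniquely determined this gives $\langle Vf \rangle = \langle Vh \rangle$. It therefore suffices to prove $\langle Vh \rangle \cong \langle Vg \rangle$. From $h \,\scR\, g$, that is $hM = gM$, choose $a, b \in M = \End \Gamma$ with $g = ha$ and $h = gb$.

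The isomorphism is then assembled from $a$~and~$b$. Reading composition on the right, $Vh = (Vg)b$ and $Vg = (Vh)a$, so $b$ restricts to a surjective homomorphism $\beta \colon \langle Vg \rangle \to \langle Vh \rangle$ and $a$ restricts to a surjective homomorphism $\alpha \colon \langle Vh \rangle \to \langle Vg \rangle$, restrictions of endomorphisms of~$\Gamma$ being homomorphisms between the induced substructures. For each $v \in V$, $(vg)\beta\alpha = ((vg)b)a = (vh)a = vg$ and $(vh)\alpha\beta = ((vh)a)b = (vg)b = vh$, using $h = gb$ and $g = ha$; hence $\beta\alpha$ and $\alpha\beta$ are the identity maps on $\langle Vg \rangle$ and $\langle Vh \rangle$ respectively, so $\beta$ is a bijection whose inverse~$\alpha$ is again a homomorphism, that is, an isomorphism. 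Together with $\langle Vf \rangle = \langle Vh \rangle$ this yields $\langle Vf \rangle \cong \langle Vg \rangle$. I do not expect a genuine obstacle here; the one point needing care is that a bijective homomorphism of relational structures need not be an isomorphism, which is precisely why the argument exhibits homomorphisms in both directions and verifies that they are mutually inverse, rather than appealing to bijectivity alone.
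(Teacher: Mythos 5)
Your proposal is correct and follows essentially the same route as the paper: parts (i) and (ii) by passing to the corresponding facts for images and kernels in $\mathcal{T}_{V}$, and part (iii) by factoring $\scD$ through an intermediate element~$h$ and exhibiting mutually inverse restrictions of endomorphisms between the induced substructures (the paper uses the decomposition $f\,\scR\,h$, $h\,\scL\,g$ rather than your $f\,\scL\,h$, $h\,\scR\,g$, but this is immaterial). Your explicit caution that a bijective homomorphism need not be an isomorphism is exactly the point the paper handles by noting that the induced edge bijections have inverses.
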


\begin{proof}
  (iii)~Write $\mathcal{E} = (E_{i})_{i \in I}$.  By assumption,
  there exists $h \in \End \Gamma$ such that $f \mathscr{R} h$
  and $h \mathscr{L} g$.  By~(i), it follows $Vh = Vg$.  As $f
  \mathscr{R} h$, there exist endomorphisms $s$~and~$t$ of~$\Gamma$
  with $h = fs$ and $f = ht$.  As $f = fst$ and $h = hts$, the
  map~$s$ induces a bijection from~$Vf$ to~$Vh$.  Moreover, as
  $s$~and~$t$ are endomorphisms, $s$~induces, for each $i \in I$, a
  bijection from $E_{i} \cap (Vf \times Vf)$ to $E_{i} \cap (Vh \times
  Vh)$ with inverse~$t$.  Hence $s$~induces an isomorphism
  from~$\langle Vf \rangle$ to~$\langle Vh \rangle = \langle Vg \rangle$.
\end{proof}

An element~$f$ of~$M$ is called \emph{regular} if there exists $g \in
M$ such that $fgf = f$.  An idempotent endomorphism~$e$ is regular
since $e^{3} = e$ and if $f$~is regular, then every element in the
$\scD$\nbd class of~$f$ is also
regular~\cite[Proposition~2.3.1]{Howie}.  We refer to such $\scD$\nbd
classes as \emph{regular $\scD$\nbd classes}.  We are particularly
concerned with idempotent endomorphisms and their $\scH$- and
$\scD$\nbd classes and so the observation in
Lemma~\ref{lem:reg-classes} below that the implications in
Lemma~\ref{lem:class-facts} reverse for regular elements is useful.

If $f$~is any endomorphism of $\Gamma = (V,\mathcal{E})$, where
$\mathcal{E} = (E_{i})_{i \in I}$, then immediately $E_{i}f \subseteq
E_{i} \cap (Vf \times Vf)$ for all~$i$.  On the other hand, if $f$~is
regular, say $fgf = f$ for $g \in M$, then $gf$~is idempotent and it
is easy to check that $Vgf = Vf$.   Hence if $(x,y) \in E_{i} \cap (Vf
\times Vf)$, then $x,y \in Vgf$ and $(x,y) = (x,y)gf \in E_{i}f$.
Consequently $E_{i} \cap (Vf \times Vf) = E_{i}f$, which establishes
that for regular endomorphisms our two possible definitions of image
coincide.

\begin{prop}
  \label{prop:reg-image}
  Let $f$~be a regular endomorphism of the relational structure
  $\Gamma = (V,\mathcal{E})$.  Then the image $\im f = (Vf,
  \mathcal{E}f)$ and the induced substructue~$\langle Vf \rangle$
  of\/~$\Gamma$ are equal. \qed
\end{prop}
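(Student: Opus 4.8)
The plan is to reduce the statement to checking, for each $i \in I$ (write $\mathcal{E} = (E_{i})_{i \in I}$), the single equality $E_{i}f = E_{i} \cap (Vf \times Vf)$. Indeed, $\im f = (Vf,(E_{i}f)_{i \in I})$ by definition, while $\langle Vf \rangle = (Vf,(E_{i} \cap (Vf \times Vf))_{i \in I})$, so once these relations agree for every~$i$ the two substructures coincide. The inclusion $E_{i}f \subseteq E_{i} \cap (Vf \times Vf)$ holds for any endomorphism and uses no regularity: since $f$~is a homomorphism, $E_{i}f \subseteq E_{i}$, and trivially $E_{i}f \subseteq Vf \times Vf$.

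For the reverse inclusion I would bring in regularity. Choose $g \in \End\Gamma$ with $fgf = f$ and set $e = gf$; then $e^{2} = g(fgf) = gf = e$, so $e$~is an idempotent endomorphism. The first routine step is to verify $Ve = Vf$: on one side $Ve = Vgf = (Vg)f \subseteq Vf$, and on the other, for any $v \in V$ we have $vf = v(fgf) = (vf)e \in Ve$, whence $Vf \subseteq Ve$. The second routine step is the standard observation that an idempotent map is the identity on its image: if $w \in Ve$, say $w = ue$, then $we = ue^{2} = ue = w$. Combining these, $e$~fixes every vertex lying in~$Vf$.

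Now take a pair $(x,y) \in E_{i}$ with $x,y \in Vf$. By the previous paragraph $x = xe$ and $y = ye$, so $(x,y) = (xe,ye)$. Applying the homomorphism~$g$ to $(x,y) \in E_{i}$ gives $(xg,yg) \in E_{i}$, and then applying~$f$ gives $(xgf,ygf) \in E_{i}f$; but $(xgf,ygf) = (xe,ye) = (x,y)$, so $(x,y) \in E_{i}f$. This yields $E_{i} \cap (Vf \times Vf) \subseteq E_{i}f$, and hence equality, for every~$i$, completing the proof.

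I do not anticipate any genuine obstacle: the argument is essentially the computation already carried out in the paragraph immediately preceding the statement, and the only points needing a little care are the bookkeeping identities $Ve = Vf$ and the pointwise-fixing property of idempotents, together with keeping the right action of endomorphisms on the relations~$E_{i}$ consistent. One could even simply assert the proposition as an immediate consequence of that preceding discussion.
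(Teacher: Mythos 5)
Your argument is correct and is essentially the same as the paper's: the paper proves this proposition in the paragraph immediately preceding its statement, by noting $E_{i}f \subseteq E_{i} \cap (Vf \times Vf)$ for any endomorphism and then using that $gf$ is an idempotent with $Vgf = Vf$ to write $(x,y) = (x,y)gf \in E_{i}f$ for $(x,y) \in E_{i} \cap (Vf \times Vf)$. Your write-up merely makes explicit the bookkeeping steps ($Ve = Vf$ and that an idempotent fixes its image pointwise) that the paper leaves as ``easy to check.''
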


\begin{lemma}
  \label{lem:reg-classes}
  Let $f$~and~$g$ be regular elements in the endomorphism monoid of
  the relational structure $\Gamma = (V,\mathcal{E})$.  Then
  \begin{enumerate}
  \item $f$~and~$g$ are $\mathscr{L}$\nbd related if and only if\/ $Vf
    = Vg$;
  \item $f$~and~$g$ are $\mathscr{R}$\nbd related if and only if\/
    $\ker f = \ker g$;
  \item $f$~and~$g$ are $\mathscr{D}$\nbd related if and only if the
    images of $f$~and~$g$ are isomorphic.
  \end{enumerate}
\end{lemma}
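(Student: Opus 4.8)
The plan is to establish each of the three equivalences by combining the corresponding implication already proved in Lemma~\ref{lem:class-facts} (the ``only if'' directions, which hold for arbitrary endomorphisms) with a converse argument that exploits regularity. The three cases are largely independent, so I would treat them in turn.

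For~(i), the forward direction is Lemma~\ref{lem:class-facts}(i). For the converse, suppose $Vf = Vg$ with $f,g$ regular. Pick $f',g' \in M$ with $ff'f = f$ and $gg'g = g$; then $ff'$ and $gg'$ are idempotents with $V(ff') = Vf = Vg = V(gg')$, and more importantly $ff'$ (resp.\ $gg'$) acts as the identity on its image $Vf$ (resp.\ $Vg$). The key observation is that $ff'$ restricted to $Vg = Vf$ is the identity, so $g = (ff')g$, whence $g \in Mf$; symmetrically $f \in Mg$, giving $Mf = Mg$ and hence $f \mathrel{\scL} g$. (One must check $g = (ff')g$ pointwise: for $v \in V$, $vg \in Vg = Vf = V(ff')$, and $ff'$ fixes each point of its image, so $v(ff')g$... wait — the composition order needs care: with right-action conventions $g\cdot(ff')$ is the relevant product; $v(g\,ff') = (vg)(ff') = vg$ since $vg\in Vf$ is fixed by $ff'$. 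So $g = g(ff') \in Mf$.)

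For~(ii), the forward direction is Lemma~\ref{lem:class-facts}(ii). For the converse, suppose $\ker f = \ker g$ with both regular. Using $f'$ with $ff'f=f$, the map $f'f$ is an idempotent, and one checks $\ker(f'f) = \ker f = \ker g$. Since $f$ and $g$ have the same kernel, the map sending $vf \mapsto vg$ is a well-defined bijection $Vf \to Vg$; the task is to realise it (and its inverse) by endomorphisms of $\Gamma$, so that $fM = gM$. The natural candidates are $h_1 = f'g$ and $h_2 = g'f$: then $f h_1 = ff'g$, and since $ff'$ is an idempotent fixing $Vf$ with kernel trivial on $Vf$, and $\ker f = \ker g$ forces $ff'$ to respect the fibres appropriately, one gets $f h_1 = g$ and $g h_2 = f$. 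Hence $fM = gM$ and $f \mathrel{\scR} g$.

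For~(iii), the forward direction is Lemma~\ref{lem:class-facts}(iii). For the converse, suppose the images of $f$ and $g$ are isomorphic; by Proposition~\ref{prop:reg-image} these images equal the induced substructures $\langle Vf\rangle$ and $\langle Vg\rangle$, so fix an isomorphism $\theta\colon \langle Vf\rangle \to \langle Vg\rangle$. I would produce an endomorphism $h$ of $\Gamma$ with $f \mathrel{\scR} h$ and $h \mathrel{\scL} g$, witnessing $f \mathrel{\scD} g$. Take $h = f\bar\theta$ where $\bar\theta$ is $\theta$ extended by $f'\!$ first: concretely, let $h = ff'\!\cdot(\text{a lift of }\theta)$. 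Since $\theta$ is a bijection $Vf\to Vg$, the map $h$ has image $Vg$, so $h \mathrel{\scL} g$ by part~(i). For $f \mathrel{\scR} h$: one exhibits $s,t$ with $h = fs$, $f = ht$, taking $s$ to encode $\theta$ on $Vf$ (extended arbitrarily, e.g.\ via $f'$) and $t$ to encode $\theta^{-1}$; the regularity of $f$ (via the idempotent $ff'$ that is the identity on $Vf$) guarantees these compose back correctly on the image. Then $\ker h = \ker f$, so $f \mathrel{\scR} h$ by part~(ii), and $f \mathrel{\scD} g$ follows.

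The main obstacle I expect is bookkeeping with the composition conventions and, in~(iii), verifying that the abstract isomorphism $\theta$ between the induced substructures genuinely extends to (or is induced by) honest endomorphisms of the ambient $\Gamma$ — this is where Proposition~\ref{prop:reg-image} is essential, since it lets us treat $\theta$ as preserving the actual relations $E_i \cap (Vf\times Vf)$ rather than just the possibly-smaller sets $E_i f$. The idempotents $ff'$ and $f'f$ furnished by regularity do all the real work: $ff'$ is an identity on the image from one side, $f'f$ controls the kernel, and splicing $\theta$ between copies of these produces the required factorisations.
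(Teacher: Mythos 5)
Your overall architecture is sound, and for parts (i) and (ii) it is more self-contained than the paper's proof, which simply notes that $Vf=Vg$ (resp.\ $\ker f=\ker g$) makes $f$ and $g$ $\scL$\nbd related (resp.\ $\scR$\nbd related) in the full transformation monoid $\mathcal{T}_{V}$ and then cites the general fact that Green's relations on regular elements are inherited by submonoids \cite[Proposition~A.1.16]{RhoSte}; your direct computation with the idempotents furnished by regularity is essentially the proof of that cited fact. However, as written the computation in~(i) is wrong: under the right-action convention you yourself adopt, if $ff'f=f$ then the idempotent whose image is $Vf$ and which fixes $Vf$ pointwise is $f'f$, not $ff'$ --- indeed for $w=vf$ one has $w(f'f)=v(ff'f)=vf=w$, whereas $V(ff')=(Vf)f'$ and there is no reason for $ff'$ to fix $Vf$. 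So the step ``$(vg)(ff')=vg$ since $vg\in Vf$ is fixed by $ff'$'' fails; what you want is $g=g(f'f)=(gf')f\in Mf$. The same confusion infects the justification in~(ii): there $ff'$ \emph{is} the relevant idempotent, but not because it fixes $Vf$; the correct reason is that $ff'f=f$ gives $(v,v(ff'))\in\ker f=\ker g$ for every $v$, whence $ff'g=g$ and $g=f\cdot(f'g)\in fM$.

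Part~(iii) follows the paper's strategy (produce $h$ with $f\,\scR\,h$ and $h\,\scL\,g$), but it has a genuine gap: both of your appeals to the earlier parts --- ``$h\,\scL\,g$ by part~(i)'' and ``$f\,\scR\,h$ by part~(ii)'' --- require $h$ to be \emph{regular}, and you never establish this, so the argument is circular as it stands. The paper avoids this by taking $h=f\theta$ outright (an endomorphism of $\Gamma$ precisely because, via Proposition~\ref{prop:reg-image}, $\theta$ is an isomorphism of the honest induced substructures --- the point you correctly identify), proving $f\,\scR\,h$ \emph{directly} from the factorisation $f=h\cdot(e\theta^{-1})$, where $e$ is an idempotent $\scL$\nbd related to $g$ and hence the identity on $\im g$, and only then invoking \cite[Proposition~2.3.1]{Howie} to conclude that $h$ is regular, so that part~(i) may be applied to the pair $h,g$. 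Your ``$h=ff'\cdot(\text{a lift of }\theta)$'' should also be replaced by something concrete: $h=f\theta$ itself already works and needs no lift.
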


\begin{proof}
  It suffices to establish the ``if'' versions of each part.  For
  (i)~and~(ii) this follows immediately since, for example, if $Vf =
  Vg$, then $f$~and~$g$ are $\mathscr{L}$\nbd related
  in~$\mathcal{T}_{V}$ (by \cite[Exercise~2.6.16]{Howie}) and hence
  are $\mathscr{L}$\nbd related in~$\End \Gamma$
  (see~\cite[Proposition~A.1.16]{RhoSte}).

  (iii)~A more general version of this result is Theorem~2.6
  in~\cite{MagSub}, but what we require can be completed easily.  If
  $\alpha$~is an isomorphism from~$\im f$ to~$\im g$, then
  $f\alpha$~is an endomorphism of~$\Gamma$ with image~$\im g$.  As
  $g$~is regular, there is an idempotent endomorphism~$e$ that is
  $\mathscr{L}$\nbd related to~$g$,
  by~\cite[Proposition~2.3.2]{Howie}.  By~(i), $Ve = Vg$ and therefore
  $\im g = \im e$, using Proposition~\ref{prop:reg-image}.  Hence the
  restriction $e|_{\im g}$~is the identity.  Then $e\alpha^{-1}$~is an
  endomorphism of~$\Gamma$ such that $f\alpha \cdot e\alpha^{-1} =
  f\alpha\alpha^{-1} = f$ and we conclude that $f \mathscr{R}
  f\alpha$.  It follows, by~\cite[Proposition~2.3.1]{Howie}, that
  $f\alpha$~is also a regular element and so $f\!\alpha\, \mathscr{L}
  g$ by~(i).  Hence $f \mathscr{D} g$, as required.
\end{proof}

One might ask what happens in the case that the $\scH$\nbd class $H =
H_{f}$, of some endomorphism~$f$ in $M = \End\Gamma$, is not a group.
In such a case, one can associate to~$H$ the \emph{\Schutz\ group},
which we shall denote~$\mathcal{S}_{H}$.  This consists of the
permutations of the $\scH$\nbd class~$H$ induced by certain elements
of~$M$.  Specifically, we define $T_{H} = \set{t \in M}{Ht \subseteq
  H}$ and $\mathcal{S}_{H} = \set{\gamma_{t}}{t \in T_{H}}$, where the
map $\gamma_{t} \colon H \to H$ is given by $h \mapsto ht$.  It is
known (see, for example, \cite[Theorem~2.22]{CP1}) that
$\mathcal{S}_{H}$~is a group and if $H$~is itself a group (for
example, when $f$~is an idempotent) then $\mathcal{S}_{H} \cong H$.
Moreover, two $\scH$\nbd classes in the same $\scD$\nbd class have
isomorphic \Schutz\ groups (see \cite[Theorem~2.25]{CP1}), which is
why we produce distinct $\scD$\nbd classes in
Theorems~\ref{thm:R-uncountSchutz}, \ref{thm:D-uncountSchutz}
and~\ref{thm:B-uncountSchutz}.  Amongst other things, we shall observe
that in our context this group can be expressed as a subgroup of the
automorphism group of the image of~$f$.

\begin{prop}
  \label{prop:Schutzgps}
  Let $f$~be an endomorphism of the relational structure $\Gamma =
  (V,\mathcal{E})$ and $H$~be the $\scH$\nbd class of~$f$ in~$\End
  \Gamma$.  Then
  \begin{enumerate}
  \item if $t \in T_{H}$, the restriction of~$t$ to the set~$Vf$
    induces an automorphism of both~$\langle Vf \rangle$ and\/~$\im
    f$;
  \item the mapping $\phi \colon \gamma_{t} \mapsto t|_{Vf}$ (for $t
    \in T_{H}$) defines an injective homomorphism from the
    \Schutz\ group~$\mathcal{S}_{H}$ into $\Aut \langle Vf \rangle
    \cap \Aut ( \im f )$;
  \item if $f$~is an idempotent endomorphism, then $H \cong \Aut(\im
    f)$ (as groups);
  \item if $f \colon V \to V$~is injective and defines an endomorphism
    of\/~$\Gamma$ and $g$~is an automorphism of\/~$\im f$, then
    $fg$~is $\scL$\nbd related to~$f$.
  \end{enumerate}
\end{prop}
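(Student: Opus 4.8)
The plan is to prove the four parts in sequence, each using its predecessors, writing $M=\End\Gamma$ throughout (a monoid, so $Mf$ is the principal left ideal generated by~$f$). The common input for parts (i)--(iii) is that, for $t\in T_H$, the element $ft$ again lies in~$H$ (since $f\in H$ and $Ht\subseteq H$), so $ft \scL f$ and $ft \scR f$; these two memberships control the restriction $t|_{Vf}$. For part~(i) I would argue first that $t|_{Vf}$ is a permutation of~$Vf$: applying Lemma~\ref{lem:class-facts}(i) to $ft \scL f$ gives $(Vf)t=V(ft)=Vf$ (surjectivity onto~$Vf$), and applying Lemma~\ref{lem:class-facts}(ii) to $ft \scR f$ gives $\ker(ft)=\ker f$, which forces $t|_{Vf}$ to be injective. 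Since $\mathcal{S}_H$ is a group by \cite[Theorem~2.22]{CP1}, there is $t'\in T_H$ with $\gamma_{t'}=\gamma_t^{-1}$, and evaluating the relations $\gamma_t\gamma_{t'}=\gamma_{t'}\gamma_t=\mathrm{id}_H$ at~$f$ yields $ftt'=f=ft't$; together with $(Vf)t=(Vf)t'=Vf$ this shows $t'|_{Vf}$ is the inverse permutation of~$t|_{Vf}$. As $t$ and~$t'$ restrict to endomorphisms of~$\langle Vf\rangle$, and a permutation of~$Vf$ whose inverse also preserves each $E_i\cap(Vf\times Vf)$ is an automorphism, we get $t|_{Vf}\in\Aut\langle Vf\rangle$. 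To upgrade this to~$\im f$ one exploits $\scL$-relatedness again: write $ft=u'f$ with $u'\in M$ (possible since $ft\in Mf$); then for an edge $(uf,vf)\in E_if$ one computes $((uf)t,(vf)t)=(u(ft),v(ft))=((uu')f,(vu')f)\in E_if$, because $u'$ is an endomorphism, so $t|_{Vf}$ maps $E_if$ into itself, and the same applied to~$t'$ gives the reverse inclusion. I expect this passage from $\langle Vf\rangle$ to~$\im f$ --- which genuinely uses both that $\mathcal{S}_H$ is a group and the endomorphism witness~$u'$ of $\scL$-relatedness --- to be the crux of the proposition; the rest builds routinely on~(i).

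For part~(ii): $\phi$ is well defined because $\gamma_t=\gamma_s$ forces $ft=fs$ and hence $t|_{Vf}=s|_{Vf}$ (as $Vf=\set{vf}{v\in V}$); its image lies in $\Aut\langle Vf\rangle\cap\Aut(\im f)$ by~(i); and it is a homomorphism since $\gamma_t\gamma_s=\gamma_{ts}$ and $(ts)|_{Vf}=(t|_{Vf})(s|_{Vf})$, using $(Vf)t=Vf$ from~(i). For injectivity, suppose $\phi(\gamma_t)$ is the identity of~$Vf$, so $ft=f$; writing an arbitrary $h\in H$ as $h=rf$ with $r\in M$ (possible as $h \scL f$), we obtain $ht=rft=rf=h$ for every $h\in H$, so $\gamma_t$ is the identity of~$\mathcal{S}_H$.

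For part~(iii): now $f$ is idempotent, so $H$ is a group with identity~$f$ (\cite[Corollary~2.2.6]{Howie}), $f$ is regular and hence $\im f=\langle Vf\rangle$ by Proposition~\ref{prop:reg-image}, while $f|_{Vf}=\mathrm{id}_{Vf}$ and $fh=h$ for all $h\in H$. I would examine $\Theta\colon H\to\Aut(\im f)$, $h\mapsto h|_{Vf}$, which is well defined (as $H\subseteq T_H$, $H$ being a subsemigroup of~$M$, and by~(i)) and a homomorphism by the argument in~(ii). It is injective, since $h|_{Vf}=\mathrm{id}_{Vf}$ gives $vh=v(fh)=(vf)h=vf$ for all~$v$, that is, $h=f$. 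For surjectivity, given $\alpha\in\Aut(\im f)$ define $h\colon V\to V$ by $vh=(vf)\alpha$; since $\alpha$ preserves each $E_if\subseteq E_i$, the map $h$ is an endomorphism of~$\Gamma$, and using $f|_{Vf}=\mathrm{id}_{Vf}$ one checks $fh=hf=h$ and $h\bar h=\bar h h=f$, where $\bar h$ is built the same way from~$\alpha^{-1}$; hence $h\in H_f=H$ and $h|_{Vf}=\alpha$. Thus $\Theta$ is an isomorphism. (Equivalently one could compose the standard isomorphism $\mathcal{S}_H\cong H$ of a group $\scH$-class with~$\phi$, noting that here every $\gamma_t$ agrees on~$H$ with $\gamma_{ft}$ and $ft\in H$, so $\phi$ is onto.)

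For part~(iv): as $f$ is injective it is a bijection $V\to Vf$; let $f^{-1}\colon Vf\to V$ be its inverse. Given $g\in\Aut(\im f)$, set $k=fgf^{-1}$ and $k'=fg^{-1}f^{-1}$, both maps $V\to V$; since the pairs of each $E_if$ are precisely the $f$-images of pairs of~$E_i$ and $g,g^{-1}$ permute $E_if$, both $k$ and~$k'$ are endomorphisms of~$\Gamma$ (and $fg\in\End\Gamma$ as well). A short computation, turning on the fact that $f^{-1}f$ is the identity on~$Vf$ while $ff^{-1}$ is the identity on~$V$, gives $kf=fg$ and $k'(fg)=f$, so $fg\in Mf$ and $f\in M(fg)$; therefore $Mf=M(fg)$, that is, $fg \scL f$.
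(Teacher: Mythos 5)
Your proof is correct and follows essentially the same route as the paper's: part (i) via the $\scL$/$\scR$\nbd relatedness of $ft$ to $f$ and the resulting witnesses, (ii) routinely from (i), and (iv) by exhibiting explicit left multipliers taking $f$ to $fg$ and back. The only cosmetic differences are that in (i) you obtain the inverse witness $t'$ from the group structure of $\mathcal{S}_{H}$ where the paper takes $s$ with $fts=f$ directly from $\scR$\nbd relatedness, in (iii) you build the isomorphism $H\to\Aut(\im f)$ directly rather than showing $\phi$ surjective and invoking $\mathcal{S}_{H}\cong H$ (a variant the paper itself acknowledges), and in (iv) you construct a second endomorphism $k'$ from $g^{-1}$ instead of showing the single conjugating map is an automorphism of $\Gamma$.
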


Part~(iii) of this lemma can also be shown directly without
reference to the \Schutz\ group; see \cite[Theorem~2.7]{JayThesis}.
When $\Gamma$~is one of the existentially closed structures that we
are interested in and $f$~arises in a specific way, we shall extend
part~(iv) to show that $fg$~is actually $\scH$\nbd related to~$f$ and
hence that the image of~$\phi$ in part~(ii) is $\Aut \langle Vf
\rangle \cap \Aut(\im f)$ \ (see Propositions~\ref{prop:R-Schutz},
\ref{prop:D-Schutz} and~\ref{prop:B-Schutz} below).

\begin{proof}
  (i)~If $t \in T_{H}$, then $ft$~is $\scH$\nbd related to~$f$ and so
  $Vft = Vf$ and $\ker ft = \ker f$, by Lemma~\ref{lem:class-facts}.
  It follows that $t$~induces a bijection on the set~$Vf$ and hence an
  endomorphism of the substructure~$\langle Vf \rangle$.  Also there
  exists some endomorphism~$s$ such that $fts = f$.  Hence if
  $(uft,vft) \in E_{i}$ for some relation~$E_{i} \in \mathcal{E}$,
  then so is $(uf,vf) = (ufts,vfts)$ and we conclude that $t$~induces
  an automorphism of~$\langle Vf \rangle$.

  Now $ft$~is, in particular, $\scL$\nbd related to~$f$ and so there
  exist endomorphisms $g$~and~$h$ of~$\Gamma$ such that $ft = gf$ and
  $hft = f$.  Let $(v_{1},v_{2}) \in E_{i}f$ for some $E_{i} \in
  \mathcal{E}$, so $v_{j} = u_{j}f$ for some points~$u_{j} \in V$,
  for~$j = 1$,~$2$, with $(u_{1},u_{2}) \in E_{i}$.  Then $v_{j}t =
  u_{j}ft = u_{j}gf$ and we conclude $(v_{1}t,v_{2}t)$~is the image
  of~$(u_{1},u_{2})$ under the endomorphism~$gf$ and so
  $(v_{1}t,v_{2}t) \in E_{i}f$.  Thus $t$~induces an endomorphism
  of~$\im f$.  In addition, the endomorphism~$s$ in the previous
  paragraph is the inverse of~$t$ on the set~$Vf$ and, since $fs =
  hfts = hf$, we similarly conclude $s$~induces an endomorphism
  of~$\im f$.  Hence $t$~induces an automorphism of the image.

  (ii)~Note that if $\gamma_{s} = \gamma_{t}$ for some~$s,t \in
  T_{H}$, then in particular $fs = ft$ and so the restrictions of
  $s$~and~$t$ to~$Vf$ coincide.  Conversely, if these restrictions
  coincide then $fs = ft$ and we conclude that $hs = ht$ for all~$h$
  that are $\mathscr{L}$\nbd related to~$f$.  Hence $\gamma_{s} =
  \gamma_{t}$.  Therefore, using~(i), we observe that $\phi$~is a
  injective map and it is straightforward to see that it is also a
  homomorphism.

  (iii)~Since $f$~is an idempotent endomorphism, $\langle Vf \rangle =
  \im f$, by Proposition~\ref{prop:reg-image}.  Then, given an
  automorphism~$g$ of~$\im f$, note that $fg$~is $\scH$\nbd related
  to~$f$, because $(fg)(fg^{-1}) = f$ since $f$~acts as the identity
  on its image.  We then see that $\gamma_{fg}\phi$~has the same
  effect on points in~$Vf$ as $g$~does.  Hence $\phi \colon
  \mathcal{S} \to \Aut ( \im f )$~is surjective, as required to
  establish the isomorphism.

  (iv)~If $g$~is an automorphism of~$\im f$, define $h \colon V \to V$
  by setting~$vh$ to be the unique point satisfying $vhf = vfg$.
  Since $f$~is injective and $g$~is a bijection on the set~$Vf$, we
  conclude that $h$~is a permutation of~$V$.  As both $g$~and~$g^{-1}$
  are automorphisms of~$\im f$, we deduce that $h$~is an automorphism
  of~$\Gamma$.  Then, from $f = h^{-1}fg$, we conclude that
  $fg$~and~$f$ are $\scL$\nbd related.
\end{proof}

\section{Graphs}
\label{sec:graph}

In this paper, a \emph{graph} will have its usual definition; that is,
a relational structure $\Gamma = (V,E)$ where $V$~is the set of
vertices and $E$~is an irreflexive symmetric binary relation on~$V$.
Thus the term graph refers to an undirected graph without loops or
multiple edges.  If $(u,v)$~is an edge in~$E$, we then say that the
vertices $u$~and~$v$ are \emph{adjacent} in~$\Gamma$.  Recall that a
graph~$\Gamma$ is \emph{algebraically closed} if for every finite
subset~$A$ of its vertices, there exists some vertex~$v$ such that
$v$~is adjacent to every member of~$A$.

In order to establish our results, we introduce a number of
constructions.  If $\Gamma = (V,E)$ is any graph, we define the
\emph{complement} of~$\Gamma$ to be the graph~$\Gamma^{\dagger}$ with
vertex set~$V$ and edge set $( V \times V ) \setminus (E \cup \set{
  (v,v) }{ v \in V } )$.  Thus $\Gamma^{\dagger}$~is the graph
containing precisely all the edges that are not present in~$\Gamma$.
We observe immediately:

\begin{lemma}
  \label{lem:complement}
  Let $\Gamma$~and~$\Delta$ be any graphs.  Then
  {\normalfont(i)}~$\Aut \Gamma^{\dagger} = \Aut \Gamma$;
  {\normalfont(ii)}~$\Gamma \cong \Delta$ if and only if\/
  $\Gamma^{\dagger} \cong \Delta^{\dagger}$. \qed
\end{lemma}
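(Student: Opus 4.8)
The plan is to verify both parts directly from the definition of the complement, since the statement is genuinely elementary once one observes that complementation fixes the vertex set and merely toggles which pairs are edges.

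For part~(i), I would argue that a permutation $\sigma$ of the common vertex set $V$ is an automorphism of $\Gamma$ if and only if it is an automorphism of $\Gamma^{\dagger}$. The key point is that, for any permutation $\sigma$ of $V$ and any pair $(u,v)$ with $u \neq v$, we have $(u,v) \in E$ if and only if $(u\sigma, v\sigma) \in E$ (when $\sigma$ preserves $E$), and since $\sigma$ also maps the diagonal $\set{(v,v)}{v \in V}$ bijectively to itself, $\sigma$ likewise preserves the complementary edge set $(V \times V) \setminus (E \cup \set{(v,v)}{v \in V})$. The converse is identical on swapping the roles of $\Gamma$ and $\Gamma^{\dagger}$, using $(\Gamma^{\dagger})^{\dagger} = \Gamma$. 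Hence $\Aut \Gamma$ and $\Aut \Gamma^{\dagger}$ consist of exactly the same permutations of $V$, so they are equal (not merely isomorphic).

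For part~(ii), the ``only if'' direction is the substantive one: suppose $\theta \colon \Gamma \to \Delta$ is an isomorphism, where $\Gamma = (V,E)$ and $\Delta = (W,F)$. Then $\theta$ is a bijection $V \to W$ with $(u,v) \in E \iff (u\theta, v\theta) \in F$ for all $u,v$. I would check that the same $\theta$ is an isomorphism $\Gamma^{\dagger} \to \Delta^{\dagger}$: for $u \neq v$ we have $(u,v) \notin E \iff (u\theta, v\theta) \notin F$, and $\theta$ carries the diagonal of $V$ onto the diagonal of $W$, so $\theta$ sends the edge set of $\Gamma^{\dagger}$ bijectively onto that of $\Delta^{\dagger}$. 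The ``if'' direction then follows by applying the ``only if'' direction to $\Gamma^{\dagger}$ and $\Delta^{\dagger}$ together with $(\Gamma^{\dagger})^{\dagger} = \Gamma$, $(\Delta^{\dagger})^{\dagger} = \Delta$.

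I do not anticipate any real obstacle here; the only thing to be slightly careful about is bookkeeping with the diagonal, i.e.\ remembering that the complement is taken relative to $(V \times V) \setminus \set{(v,v)}{v \in V}$ rather than all of $V \times V$, so that irreflexivity is preserved and the diagonal plays no role in determining adjacency. That is precisely why it is harmless: any permutation or bijection respecting the vertex sets automatically respects the diagonals, so toggling edges commutes with applying the map. This is why the lemma is stated as an immediate observation.
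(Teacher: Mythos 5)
Your proof is correct; the paper offers no argument at all for this lemma (it is stated with ``We observe immediately'' and a \qed), and your direct verification --- that any vertex bijection respecting the diagonal toggles edges and non-edges compatibly, together with $(\Gamma^{\dagger})^{\dagger} = \Gamma$ for the converse directions --- is exactly the routine check the authors are leaving implicit. Nothing is missing.
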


Recall that a graph is \emph{locally finite} if every vertex is
adjacent to a finite number of vertices.  If $\Gamma = (V,E)$ and
$\Delta = (W,F)$ are two graphs (where the vertex sets $V$~and~$W$ are
assumed disjoint), then the \emph{disjoint union}~$\Gamma
\disjunion \Delta$ is the graph with vertex set~$V \cup W$ and edge
set~$E \cup F$.  These two concepts may be used to construct an
algebraically closed graph as follows:

\begin{lemma}
  \label{lem:ac-construct}
  Let $\Gamma$~be any graph and $\Lambda$~be an infinite locally
  finite graph.  Then $( \Gamma \disjunion \Lambda)^{\dagger}$~is
  algebraically closed.
\end{lemma}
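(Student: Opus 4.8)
The plan is to verify the algebraic closure condition for $(\Gamma \disjunion \Lambda)^\dagger$ directly from the definition: given a finite set $A$ of vertices, I must find a vertex $w$ adjacent in $(\Gamma \disjunion \Lambda)^\dagger$ to every member of $A$. Passing to the complement, adjacency in $(\Gamma \disjunion \Lambda)^\dagger$ means non-adjacency in $\Gamma \disjunion \Lambda$ (and distinctness), so it suffices to find a vertex $w \notin A$ that, in the graph $\Gamma \disjunion \Lambda$, is adjacent to none of the vertices in $A$. I would look for such a $w$ inside the $\Lambda$ part.

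First I would write $A = A_\Gamma \cup A_\Lambda$, the partition of $A$ into the vertices lying in $\Gamma$ and those lying in $\Lambda$. In $\Gamma \disjunion \Lambda$ there are no edges between the two parts, so any vertex of $\Lambda$ is automatically non-adjacent (in $\Gamma \disjunion \Lambda$) to every vertex of $A_\Gamma$. Thus I only need a vertex $w$ of $\Lambda$, distinct from the finitely many vertices of $A_\Lambda$, that is non-adjacent in $\Lambda$ to each vertex of $A_\Lambda$. Here is where local finiteness enters: each vertex in the finite set $A_\Lambda$ has only finitely many neighbours in $\Lambda$, so the set $N$ consisting of $A_\Lambda$ together with all $\Lambda$-neighbours of vertices in $A_\Lambda$ is finite. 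Since $\Lambda$ is infinite, I may pick $w \in V(\Lambda) \setminus N$. By construction $w \notin A$ and $w$ is non-adjacent in $\Gamma \disjunion \Lambda$ to every vertex of $A$, hence adjacent to every vertex of $A$ in $(\Gamma \disjunion \Lambda)^\dagger$. As $A$ was an arbitrary finite set of vertices, $(\Gamma \disjunion \Lambda)^\dagger$ is algebraically closed.

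I do not expect any genuine obstacle; the argument is a short counting argument. The only point requiring a little care is the bookkeeping with the complement — making sure that "adjacent in $\Gamma^\dagger$" has been correctly translated into "distinct and non-adjacent in $\Gamma$" — and the observation that $w$ can always be chosen outside $A$ itself, which is immediate since $A$ is finite and $\Lambda$ is infinite. The role of each hypothesis is transparent: infiniteness of $\Lambda$ guarantees a vertex survives the finite deletion, local finiteness guarantees the deleted set is finite, and the disjoint union together with the complement operation is what neutralises the (arbitrary, possibly badly-connected) graph $\Gamma$.
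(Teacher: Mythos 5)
Your proof is correct and follows essentially the same route as the paper's: use local finiteness to find a vertex of $\Lambda$ with no neighbours in $A\cap V(\Lambda)$, note that the disjoint union gives it no neighbours in $A\cap V(\Gamma)$ either, and pass to the complement. Your explicit care in choosing $w\notin A$ (needed since the complement is irreflexive) is a small point the paper's proof leaves implicit, but otherwise the two arguments coincide.
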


\begin{proof}
  Let $V$~and~$W$ denote the vertex sets of $\Gamma$~and~$\Lambda$
  respectively and let $\Delta = (\Gamma \disjunion
  \Lambda)^{\dagger}$.  If $A$~is a finite subset of~$V \cup W$ then,
  since $\Lambda$~is locally finite, there exists some vertex~$v \in
  W$ that is not adjacent in~$\Lambda$ to any vertex in~$A \cap W$.
  Consequently, $v$~is not adjacent in~$\Gamma \disjunion \Lambda$ to
  any vertex in~$A$ and so by construction $v$~is adjacent in~$\Delta$
  to every vertex in~$A$.
\end{proof}

We shall use here, and also in later sections, the locally finite
graphs~$L_{S}$ defined as follows.  Let $S$~be any subset of~$\N
\setminus \{0,1\}$.  The set of vertices of~$L_{S}$ is $\set{ \ell_{n}
}{n \in \N} \cup \set{ v_{n} }{ n \in S }$.  For every $n \in \N$,
vertex~$\ell_{n}$ is adjacent to~$\ell_{n+1}$, while for every $n \in
S$, vertex~$\ell_{n}$ is also adjacent to~$v_{n}$.  See
Figure~\ref{fig:L-graph} for a diagram of an example of~$L_{S}$.  The
following presents the basic information we need about the
graphs~$L_{S}$.  If $S \subseteq \N \setminus \{ 0,1 \}$, then we
write~$S + k$ for the set $\set{n+k}{n \in S}$.

\begin{figure}
  \begin{center}
    \begin{pspicture}(0,.5)(7,1)
      \multips(0,0)(1,0){7}{\qdisk(0,0){.05}}
      \qdisk(2,1){.05}
      \qdisk(4,1){.05}
      \qdisk(5,1){.05}
      \put(6.3,0){\dots}
      \psline(0,0)(6.2,0)
      \psline(2,0)(2,1)
      \psline(4,0)(4,1)
      \psline(5,0)(5,1)
    \end{pspicture}
  \end{center}
  \caption{The graph~$L_{\{2,4,5,\dots\}}$}
  \label{fig:L-graph}
\end{figure}
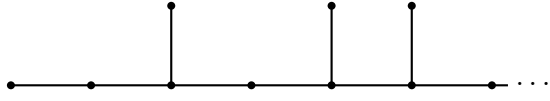

\begin{lemma}
  \label{lem:L-props}
  Let $S,T \subseteq \N \setminus \{0,1\}$.  Then
  \begin{enumerate}
  \item $\Aut L_{S} = \1$;
  \item there exists a graph homomorphism $f \colon L_{S} \to L_{T}$
    defined by an injective map on the sets of vertices if and only if
    there exists some $k \in \N$ such that $S + k \subseteq T$;
  \item $L_{S} \cong L_{T}$ if and only if $S = T$.
  \end{enumerate}
\end{lemma}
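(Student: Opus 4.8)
The plan is to exploit the rigidity of the ray $\ell_0,\ell_1,\ell_2,\dots$ forming the backbone of each graph~$L_S$, together with a close analysis of where an injective homomorphism can send this ray inside~$L_T$. For part~(i), I would first record the degree-$1$ vertices of~$L_S$: these are exactly $\ell_0$ and the pendants~$v_n$ with $n\in S$, and they are distinguished by the degree of their unique neighbour. Indeed $\ell_0$~is adjacent only to~$\ell_1$, which has degree~$2$ because $1\notin S$; whereas $v_n$~is adjacent only to~$\ell_n$, and $n\in S$ forces $n\geq2$, so $\ell_n$~is adjacent to $\ell_{n-1},\ell_{n+1},v_n$ and has degree~$3$. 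Hence $\ell_0$~is the unique degree-$1$ vertex of~$L_S$ whose neighbour has degree~$2$, so every automorphism~$\phi$ fixes~$\ell_0$, and therefore fixes~$\ell_1$ as the sole neighbour of~$\ell_0$. An induction then gives $\phi(\ell_n)=\ell_n$ for all~$n$: if $\phi(\ell_{n-1})=\ell_{n-1}$ and $\phi(\ell_n)=\ell_n$ with $n\geq1$, then $\phi$~permutes the neighbours of~$\ell_n$ other than~$\ell_{n-1}$, a set equal to~$\{\ell_{n+1}\}$ or to~$\{\ell_{n+1},v_n\}$; since $\deg v_n=1<2\leq\deg\ell_{n+1}$ and $\phi$~preserves degrees, $\phi$~must fix~$\ell_{n+1}$ (and~$v_n$, when $n\in S$). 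Thus $\phi$~fixes every~$\ell_n$ and every~$v_n$, so $\phi=\1$.

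For part~(ii), the implication $(\Leftarrow)$ is immediate: if $S+k\subseteq T$, then $\ell_n\mapsto\ell_{n+k}$ together with $v_n\mapsto v_{n+k}$ for $n\in S$ (legitimate, since then $n+k\in T$) is an injective homomorphism $L_S\to L_T$. For $(\Rightarrow)$, suppose $f\colon L_S\to L_T$ is a homomorphism injective on vertices and set $w_n=f(\ell_n)$, so the $w_n$ are pairwise distinct with $w_n$~adjacent to~$w_{n+1}$. A pendant of~$L_T$ has only one neighbour, so $w_n$~being a pendant for some $n\geq1$ would give $w_{n-1}=w_{n+1}$, a contradiction; hence every~$w_n$ with $n\geq1$ lies on the backbone ray of~$L_T$, and $w_0$~either lies on that ray or equals a pendant~$v_m$ (and then $w_1=\ell_m$). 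Writing these ray-vertices as $w_n=\ell_{a_n}$, the indices~$a_n$ form a non-repeating sequence in~$\N$ whose consecutive terms differ by~$1$; such a sequence cannot reverse direction without repeating a value, so it is strictly monotone, and being bounded below it is strictly increasing. Therefore $f(\ell_n)=\ell_{n+k}$ for all~$n$ (if $w_0$~is on the ray), or $f(\ell_0)=v_m$ and $f(\ell_n)=\ell_{n+k}$ for all $n\geq1$ with $k=m-1$; either way $k\in\N$. Now, for $s\in S$ (so $s\geq2$) the vertex $f(v_s)$~is adjacent to $f(\ell_s)=\ell_{s+k}$ and, by injectivity, differs from $f(\ell_{s-1})=\ell_{s+k-1}$ and from $f(\ell_{s+1})=\ell_{s+k+1}$; since the only remaining neighbour of~$\ell_{s+k}$ in~$L_T$ is~$v_{s+k}$, which exists only if $s+k\in T$, we conclude $s+k\in T$. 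Hence $S+k\subseteq T$.

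For part~(iii), $(\Leftarrow)$ is trivial, and for $(\Rightarrow)$ I would feed an isomorphism $\phi\colon L_S\to L_T$ into the analysis of~(ii), since $\phi$~is in particular injective. The case $\phi(\ell_0)=v_m$ cannot occur, for then the vertex~$\ell_0$ of~$L_T$ would lie outside the image of~$\phi$, contradicting surjectivity; so $\phi(\ell_n)=\ell_{n+k}$ for all~$n$, and applying surjectivity to~$\ell_0$ again forces $k=0$. Then $\phi(\ell_n)=\ell_n$ for all~$n$ and $\phi(v_s)=v_s$ for $s\in S$, so the image of~$\phi$ is $\{\ell_n:n\in\N\}\cup\{v_s:s\in S\}$; equating this with the vertex set $\{\ell_n:n\in\N\}\cup\{v_t:t\in T\}$ of~$L_T$ gives $\{v_s:s\in S\}=\{v_t:t\in T\}$, i.e.\ $S=T$. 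I expect the crux to be the $(\Rightarrow)$ direction of~(ii): the essential point is that an injective homomorphism can neither double back along the ray of~$L_T$ nor slip onto a pendant, which is exactly the non-repeating $\pm1$-walk observation, after which everything is bookkeeping with indices.
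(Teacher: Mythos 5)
Your proof is correct and follows essentially the same route as the paper: degree counting to pin down $\ell_0$ and hence rigidity for~(i), forcing an injective homomorphism to shift the backbone ray by some $k$ for~(ii), and deducing~(iii) from that analysis. You are in fact slightly more careful than the paper's own proof, which asserts outright that $\ell_n f=\ell_{n+k}$ for all $n$ and does not separately treat the possibility $f(\ell_0)=v_m$ that you handle explicitly.
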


\begin{proof}
  (i)~By construction, $\ell_{0}$~is the only vertex of degree~$1$
  in~$L_{S}$ that is adjacent to a vertex of degree~$2$.  All other
  vertices~$\ell_{n}$ (for $n \geq 1$) have degree at least~$2$.  All
  vertices~$v_{n}$ (for $n \in S$) have degree~$1$ and are adjacent to
  vertices~$\ell_{n}$ of degree~$3$.  It follows that $\Aut L_{S} =
  \1$.

  (ii)~Suppose $f \colon L_{S} \to L_{T}$ is a graph homomorphism
  given by an injective map on the sets of vertices.  Then $f$~must
  map the infinite path $\{ (\ell_{0},\ell_{1}), (\ell_{1},\ell_{2}),
  \dots \}$ in~$L_{S}$ to an infinite path of distinct vertices
  in~$L_{T}$.  Hence there exists $k \in \N$ such that $\ell_{n}f =
  \ell_{n+k}$ for all $n \in \N$.  In order that edges of the
  form~$(\ell_{n},v_{n})$ in~$L_{S}$ are mapped to edges in~$L_{T}$,
  it follows that $S + k \subseteq T$.

  Conversely, if $S + k \subseteq T$, then the map $f \colon L_{S} \to
  L_{T}$ given by $\ell_{n}f = \ell_{n+k}$ for $n \in \N$ and $v_{n}f
  = v_{n+k}$ for $n \in S$ is a graph homomorphism.

  (iii)~follows immediately from~(ii).
\end{proof}

We now establish the first of our main theorems for graphs.

\begin{thm}
  \label{thm:ac-graphs}
  Let $\Gamma$~be a countable graph.  Then there exist
  $2^{\aleph_{0}}$~pairwise non-isomorphic countable algebraically
  closed graphs whose automorphism group is isomorphic to that
  of\/~$\Gamma$.
\end{thm}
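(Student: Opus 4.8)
The plan is to use Lemma~\ref{lem:ac-construct} together with the family of locally finite graphs~$L_{S}$ to manufacture $2^{\aleph_{0}}$ non-isomorphic algebraically closed graphs, all with the required automorphism group. Concretely, for each infinite subset $S \subseteq \N \setminus \{0,1\}$ I would consider the graph $\Delta_{S} = (\Gamma \disjunion L_{S})^{\dagger}$. By Lemma~\ref{lem:ac-construct} each $\Delta_{S}$ is algebraically closed (here we use that $L_{S}$ is infinite and locally finite). Since there are $2^{\aleph_{0}}$ infinite subsets of $\N \setminus \{0,1\}$, this gives $2^{\aleph_{0}}$ candidate graphs, and the two things left to check are that they all have the correct automorphism group and that enough of them are pairwise non-isomorphic.

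For the automorphism group, Lemma~\ref{lem:complement}(i) reduces the problem to computing $\Aut(\Gamma \disjunion L_{S})$. The key observation is that $\Gamma$ is a fixed countable graph whereas $L_{S}$ is connected with all vertices of finite degree along an infinite ray, so no automorphism can mix a component of $\Gamma$ with $L_{S}$ — one needs a little care if $\Gamma$ itself has components that happen to look like $L_{S}$, but this can be arranged away, for instance by first replacing $\Gamma$ by a graph with the same automorphism group in which no such coincidence occurs, or by noting the degree-sequence obstruction along the ray. Granting that, an automorphism of $\Gamma \disjunion L_{S}$ restricts to an automorphism of $L_{S}$ and of $\Gamma$; by Lemma~\ref{lem:L-props}(i), $\Aut L_{S} = \1$, so $\Aut(\Gamma \disjunion L_{S}) \cong \Aut \Gamma$, and hence $\Aut \Delta_{S} \cong \Aut \Gamma$ by Lemma~\ref{lem:complement}(i).

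For non-isomorphism, by Lemma~\ref{lem:complement}(ii) it suffices to distinguish the graphs $\Gamma \disjunion L_{S}$. If $\Gamma \disjunion L_{S} \cong \Gamma \disjunion L_{T}$, then comparing connected components — again using that the $L_{S}$-component is not isomorphic to any component of $\Gamma$ — forces $L_{S} \cong L_{T}$, whence $S = T$ by Lemma~\ref{lem:L-props}(iii). So distinct $S$ give non-isomorphic $\Delta_{S}$, except possibly for a bounded amount of collapsing caused by components of $\Gamma$; since $\Gamma$ is countable it has only countably many isomorphism types of component, so at most countably many values of $S$ are "bad", leaving $2^{\aleph_{0}}$ genuinely distinct isomorphism types among the $\Delta_{S}$.

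The main obstacle is the interaction between $\Gamma$ and the $L_{S}$: all of the clean statements above ("no automorphism mixes components", "$L_{S}$ is not a component of $\Gamma$") can fail for pathological $\Gamma$, so the real work is in handling these coincidences cleanly. I expect the neatest route is a preprocessing step — replace $\Gamma$ by an isomorphic-automorphism-group graph, e.g.\ a suitable connected graph or one built so that its components have degrees incompatible with the infinite ray of $L_{S}$ (Frucht-type constructions give such latitude) — after which the component analysis becomes routine and the two Lemmas~\ref{lem:complement} and~\ref{lem:L-props} do the rest.
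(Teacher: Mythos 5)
Your proposal is correct and follows essentially the same route as the paper: form $\Delta_{S} = (\Gamma \disjunion L_{S})^{\dagger}$, apply Lemma~\ref{lem:ac-construct} for algebraic closure, and use Lemmas~\ref{lem:complement} and~\ref{lem:L-props} for the automorphism group and non-isomorphism. The ``preprocessing'' you contemplate is unnecessary --- the paper simply selects, from the outset, the $2^{\aleph_{0}}$ subsets $S$ for which $L_{S}$ is isomorphic to no component of~$\Gamma$ (possible since $\Gamma$ has only countably many components), which is exactly the discarding of countably many bad~$S$ that you describe at the end of your third paragraph.
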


\begin{proof}
  Fix the countable graph~$\Gamma$.  This has at most countably many
  connected components and so, by Lemma~\ref{lem:L-props}(iii),
  there are $2^{\aleph_{0}}$~choices of subsets~$S$ of~$\N \setminus
  \{ 0,1 \}$ such that $L_{S}$~is isomorphic to no component
  of~$\Gamma$.  For such a choice of~$S$,
  \[
  \Aut (\Gamma \disjunion L_{S})^{\dagger} = \Aut (\Gamma \disjunion
  L_{S}) \cong \Aut \Gamma \times \Aut L_{S} \cong \Aut \Gamma.
  \]
  Hence there are $2^{\aleph_{0}}$~choices of~$S$ such that
  $\Delta_{S} = (\Gamma \disjunion L_{S})^{\dagger}$ has automorphism
  group isomorphic to that of~$\Gamma$.  Lemma~\ref{lem:ac-construct}
  tells us that each~$\Delta_{S}$ is algebraically closed.

  Finally, if $S$~and~$T$ are distinct subsets of~$\N \setminus
  \{0,1\}$ such that neither $L_{S}$~nor~$L_{T}$ are isomorphic to a
  connected component of~$\Gamma$, then $L_{S} \not\cong L_{T}$ by
  Lemma~\ref{lem:L-props}(iii).  It then follows that $\Gamma
  \disjunion L_{S} \not\cong \Gamma \disjunion L_{T}$ and hence
  $\Delta_{S} \not\cong \Delta_{T}$ by
  Lemma~\ref{lem:complement}(ii).  This completes the proof.
\end{proof}

To establish the required information about images of endomorphisms of
the countable universal graph~$R$, we shall remind the reader of a
standard way to construct~$R$ by building it around any countable
graph.  For a countable graph $\Gamma = (V,E)$, construct a new
graph~$\mathcal{G}(\Gamma)$ as follows.  Enumerate the finite subsets
of~$V$ as~$(A_{i})_{i \in I}$ where $I \subseteq \N$.  For each~$i \in
I$, let $v_{i}$~be a new vertex.  Define~$\mathcal{G}(\Gamma)$ to be
the graph with vertex set $V \cup \set{v_{i}}{i \in I}$ and edge set
\[
E \cup \set{ (v_{i},a), (a,v_{i}) }{ a \in A_{i}, \, i \in I };
\]
thus, we have, for each~$i$, added a new vertex~$v_{i}$ that is
adjacent to every vertex in~$A_{i}$ but to no other vertex
in~$\mathcal{G}(\Gamma)$.  Now construct a sequence of
graphs~$\Gamma_{n}$ by defining $\Gamma_{0} = \Gamma$ and
$\Gamma_{n+1} = \mathcal{G}(\Gamma_{n})$ for each~$n \geq 0$.  Since
each~$\Gamma_{n}$ is naturally a subgraph of~$\Gamma_{n+1}$, we can
define~$\Gamma_{\infty}$ to be the limit of this sequence of graphs.
The resulting graph is countable and is, by construction,
existentially closed and so isomorphic to the countable universal
graph~$R$.

Suppose now that we also have a graph homomorphism $f \colon \Gamma
\to \Gamma_{\infty}$.  As before, enumerate the finite subsets of~$V$
as~$(A_{i})_{i \in \N}$.  We shall define an extension $\tilde{f}
\colon \mathcal{G}(\Gamma) \to \Gamma_{\infty}$.  Indeed, suppose that
a graph homomorphism~$f_{n}$ has been defined with domain equal to the
subgraph of~$\mathcal{G}(\Gamma)$ induced by~$V \cup \{
v_{1},v_{2},\dots,v_{n} \}$ and such that the restriction of~$f_{n}$
to~$\Gamma$ equals~$f$.  As $\Gamma_{\infty}$~is in particular
algebraically closed, there exists a vertex~$w$ within it that is
adjacent to every vertex in $(A_{n+1} \cup \{ v_{1},v_{2},\dots,v_{n}
\} )f_{n}$.  We extend to a function~$f_{n+1}$ with domain equal to
the subgraph of~$\mathcal{G}(\Gamma)$ induced by $V \cup
\{v_{1},v_{2},\dots,v_{n+1} \}$ by defining $v_{n+1}f_{n+1} = w$.  The
choice of~$w$ ensures that $f_{n+1}$~is a graph homomorphism.  Note
that, if $\im f$~was originally an algebraically closed subgraph
of~$\Gamma_{\infty}$, then we could at every stage choose $w_{n+1} \in
\im f$.  Consequently, in this case we can arrange for $\im f_{n} =
\im f$ for all~$n$.

Since each~$f_{n+1}$ extends~$f_{n}$, we may define $\tilde{f} =
\lim_{n\to\infty} f_{n} = \bigcup_{n=0}^{\infty} f_{n}$.  Then
$\tilde{f}$~is a graph homomorphism $\mathcal{G}(\Gamma) \to
\Gamma_{\infty}$ whose restriction to~$\Gamma$ equals~$f$.  Moreover,
if $\im f$~is algebraically closed, we can arrange that $\im \tilde{f}
= \im f$.  We use this contruction to establish the first two parts of
the following result (a variant of which appears in a more general
setting as~\cite[Theorem~4.1]{Dolinka2}):

\begin{lemma}
  \label{lem:map-extend}
  Let $\Gamma$~be a countable graph, let $\Gamma_{\infty}$~be the copy
  of the countable universal graph constructed around\/~$\Gamma$ as
  described above, and let $f \colon \Gamma \to \Gamma_{\infty}$ be a
  graph homomorphism.
  \begin{enumerate}
  \item There exist $2^{\aleph_{0}}$~endomorphisms $\hat{f} \colon
    \Gamma_{\infty} \to \Gamma_{\infty}$ such that the restriction
    of~$\hat{f}$ to~$\Gamma$ equals~$f$.
  \item If\/ $\im f$~is algebraically closed, then there are
    $2^{\aleph_{0}}$~such extensions~$\hat{f}$ of~$f$ with $\im
    \hat{f} = \im f$.
  \item If $f$~is an automorphism of\/~$\Gamma$, then there is an
    automorphism~$\hat{f}$ of\/~$\Gamma_{\infty}$ such that the
    restriction of~$\hat{f}$ to~$\Gamma$ equals~$f$.
  \end{enumerate}
\end{lemma}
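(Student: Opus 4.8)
The plan is to exploit the $\mathcal G(\Gamma)$-construction together with the "one-step extension" argument described just above Lemma~\ref{lem:map-extend}, applying it simultaneously to $f$ and to its inverse $f^{-1}$ so that the limit map is a bijection.  First I would observe that parts (i) and (ii) follow almost immediately from the construction already set up: at each stage $n$, when we choose the image vertex $w$ for $v_{n+1}$, there are in fact $2^{\aleph_0}$ valid choices (since $\Gamma_\infty$, being existentially closed, contains infinitely many vertices adjacent to every vertex of a given finite set and non-adjacent to any prescribed finite set, so at each step we may branch), and distinct branch sequences yield distinct extensions $\hat f$.  For part (ii) one additionally notes, as the construction already remarks, that when $\im f$ is algebraically closed the witnessing vertex $w$ may always be taken inside $\im f$, so $\im f_n = \im f$ for all $n$ and hence $\im\hat f=\im f$; the branching still has $2^{\aleph_0}$ options because an algebraically closed graph is infinite and itself existentially closed in the relevant weak sense — more carefully, we branch on the choice of $w$ among the infinitely many suitable vertices, using that $\im f$ contains, for each finite $A\subseteq\im f$, infinitely many common neighbours of $A$.

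For part (iii), the key idea is a back-and-forth between the two copies.  Let $f\colon\Gamma\to\Gamma$ be an automorphism; regard $f$ as a homomorphism $\Gamma\to\Gamma_\infty$ with image $\Gamma$ (an induced subgraph, not assumed algebraically closed).  I would build $\hat f\colon\Gamma_\infty\to\Gamma_\infty$ as an increasing union of partial isomorphisms $g_n$ defined on $\Gamma_n$ (the $n$-th stage of the tower), with $g_0=f$.  Given $g_n$, a graph-isomorphism from $\Gamma_n$ onto an induced subgraph of $\Gamma_\infty$, I extend to $\Gamma_{n+1}=\mathcal G(\Gamma_n)$: each new vertex $v_i$ of $\Gamma_{n+1}$ is adjacent exactly to the finite set $A_i\subseteq V(\Gamma_n)$, so using that $\Gamma_\infty$ is existentially closed, pick a vertex $w_i$ adjacent to every vertex of $A_i g_n$ and non-adjacent to every vertex of $(V(\Gamma_n)\setminus A_i)g_n$ and to every previously chosen $w_j$ whose defining set $A_j$ does not contain (the preimage of) $v_i$; doing this one $v_i$ at a time, and keeping track of the adjacencies among the finitely many new vertices processed so far, produces an embedding $g_{n+1}$ of $\Gamma_{n+1}$.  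This handles the "forth" direction and gives an embedding $\hat f=\bigcup_n g_n$ of $\Gamma_\infty$ into itself.  To get surjectivity I interleave the "back" direction: since $\Gamma_\infty\cong R$ is homogeneous, I also enumerate the vertices of the target copy and, at alternate stages, ensure the $n$-th target vertex lies in the image, extending the (finite) partial isomorphism built so far by homogeneity of $R$; the union is then an automorphism of $\Gamma_\infty$ restricting to $f$ on $\Gamma=\Gamma_0$.

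The main obstacle I anticipate is bookkeeping in the "forth" step: when several new vertices $v_i,v_j$ of $\mathcal G(\Gamma_n)$ are added, they are mutually non-adjacent (no edges between the new vertices by construction), so the chosen witnesses $w_i,w_j$ must be non-adjacent, and each $w_i$ must moreover realize the correct adjacency pattern to all of $V(\Gamma_n)g_n$ and to the $w_j$ already chosen — this is a single finite constraint at each substep, satisfiable because $\Gamma_\infty$ is existentially closed, but one must order the new vertices and carry the constraints correctly.  Marrying this with the "back" half to obtain a genuine automorphism is the other point needing care; the cleanest route is to note that the existentially closed property of $R$ already yields, by the standard back-and-forth, that any isomorphism between finite induced subgraphs of $R$ extends to an automorphism, and to set things up so that $f$ restricted to any finite subset of $\Gamma$ is such an isomorphism and the tower stages only ever impose finitely many further constraints at a time.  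Everything else is routine, and I would relegate the detailed verification of the adjacency conditions to a remark that the construction is the familiar one for extending partial isomorphisms of the random graph.
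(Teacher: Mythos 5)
Parts (i) and (ii) of your proposal are essentially the paper's argument and are fine: extend stage by stage, note that algebraic closure supplies infinitely many witnesses at each stage (for (ii), witnesses chosen inside $\im f$, using that an algebraically closed graph automatically has infinitely many common neighbours of any finite set), and branch to get $2^{\aleph_{0}}$ extensions. (Your passing claim of ``$2^{\aleph_0}$ valid choices'' at a single stage is off --- the graph is countable --- but you correct this yourself; the continuum arises from branching over $\omega$ stages.)

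Part (iii) has a genuine gap. Your ``forth'' step asks for a vertex $w_i$ adjacent to every vertex of $A_i g_n$ and \emph{non-adjacent} to every vertex of $(V(\Gamma_n)\setminus A_i)g_n$. When $\Gamma$ is infinite (which it may be), the latter set is infinite, and existential closure of $\Gamma_\infty$ only guarantees witnesses for \emph{finite} adjacency/non-adjacency patterns; there is in general no vertex of $R$ realising a prescribed adjacency pattern over an infinite set (e.g.\ if $V(\Gamma_n)g_n$ is cofinite in $V(\Gamma_\infty)$ there may be no candidates at all). The same problem infects your ``back'' step: homogeneity of $R$ extends isomorphisms between \emph{finite} induced subgraphs, but the partial map you have built at each stage is defined on the infinite set $V(\Gamma_n)$, and your suggested fallback --- extending $f$ on each finite subset separately --- does not produce a single coherent automorphism extending all of $f$ (indeed, isomorphisms between infinite induced subgraphs of $R$ need not extend to automorphisms at all). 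The point you are missing is that no back-and-forth is needed: in $\mathcal{G}(\Gamma)$ each new vertex $v_i$ is adjacent to exactly the finite set $A_i$ and to nothing else, and the automorphism $f$ permutes the finite subsets of $V$; so setting $v_i\tilde f = v_j$ whenever $A_i f = A_j$ extends $f$ \emph{canonically} to an automorphism of $\mathcal{G}(\Gamma)$, and iterating through the tower $\Gamma_{n+1}=\mathcal{G}(\Gamma_n)$ gives the required automorphism of $\Gamma_\infty$. This is the paper's (one-line) proof and it avoids every infinite constraint your construction runs into.
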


\begin{proof}
  (i),~(ii): Our recipe above describes how to extend a graph
  homomorphism $f \colon \Gamma \to \Gamma_{\infty}$ to $\tilde{f}
  \colon \mathcal{G}(\Gamma) \to \Gamma_{\infty}$.  As
  $\Gamma_{\infty}$~is defined as the limit of the sequence given by
  $\Gamma_{0} = \Gamma$, \ $\Gamma_{n+1} = \mathcal{G}(\Gamma_{n})$,
  repeated use of this recipe constructs one example of the required
  endomorphism~$\hat{f}$.  Now observe that algebraic closure ensures
  there are infinitely many suitable vertices~$w$ adjacent to every
  vertex in~$(A_{n+1} \cup \{ v_{1},v_{2},\dots,v_{n} \} )f_{n}$.
  This freedom at each stage ensures that there are uncountably many
  possible extensions~$\hat{f}$.

  (iii)~This is achieved by a variant construction.  The
  automorphism~$f$ induces a permutation of the finite
  subsets~$A_{i}$.  If $A_{i}f = A_{j}$, then define the extension
  $\tilde{f} \colon \mathcal{G}(\Gamma) \to \mathcal{G}(\Gamma)$ by
  setting $v_{i}\tilde{f} = v_{j}$.  This is an automorphism
  of~$\mathcal{G}(\Gamma)$ and repeated use of this construction
  yields the required extension to~$\Gamma_{\infty}$.
\end{proof}

We can now observe that images of idempotent endomorphisms of
the countable universal graph~$R$ are characterized by being
algebraically closed.  This was established by Bonato and
Deli\'{c}~\cite[Proposition~4.2]{BonDel} and is part~(i) of the
following theorem.  However, our proof shows there are in fact
uncountably many idempotents with specified (algebraically closed)
image.

\begin{thm}
  \label{thm:graph-idemp}
  Let $\Gamma$~be a countable graph.  Then
  \begin{enumerate}
  \item there exists an idempotent endomorphism~$f$ of the countable
    universal graph~$R$ such that $\im f \cong \Gamma$ if and only
    if\/ $\Gamma$~is algebraically closed;
  \item if\/ $\Gamma$~is algebraically closed, there are
    $2^{\aleph_{0}}$~idempotent endomorphisms~$f$ of~$R$ such that
    $\im f \cong \Gamma$.
  \end{enumerate}
\end{thm}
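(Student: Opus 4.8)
The plan is to prove both directions of~(i) and then obtain~(ii) from the construction used in the harder direction, via Lemma~\ref{lem:map-extend}.

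For the forward direction of~(i), suppose $f$~is an idempotent endomorphism of~$R$ with $\im f \cong \Gamma$. Since $f$~is idempotent it is regular, so by Proposition~\ref{prop:reg-image} we have $\im f = \langle Vf \rangle$, the induced substructure on~$Vf$. I would then argue directly that $\langle Vf \rangle$ is algebraically closed: given a finite set~$A$ of vertices of~$Vf$, algebraic closure of~$R$ yields a vertex~$w$ of~$R$ adjacent to every vertex in~$A$; applying~$f$, the vertex~$wf$ lies in~$Vf$ and, because $f$~fixes~$A$ pointwise (as $f$~acts as the identity on its image) and $f$~is a homomorphism, $wf$~is adjacent in~$\langle Vf\rangle$ to every vertex of~$A$. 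Hence $\Gamma \cong \langle Vf\rangle$ is algebraically closed. This direction is routine.

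For the converse and for~(ii), I would use the ``build $R$ around a graph'' construction preceding Lemma~\ref{lem:map-extend}. Let $\Gamma$~be countable and algebraically closed. Take a copy of~$\Gamma$ inside a copy of~$R$; concretely, set $\Gamma_{0} = \Gamma$ and form $\Gamma_{\infty}$ as described, which is isomorphic to~$R$. Apply Lemma~\ref{lem:map-extend}(ii) to the inclusion homomorphism $f = \mathrm{id}_{\Gamma} \colon \Gamma \to \Gamma_{\infty}$, whose image is~$\Gamma$ itself and hence algebraically closed: this yields $2^{\aleph_{0}}$~endomorphisms $\hat f \colon \Gamma_{\infty} \to \Gamma_{\infty}$ extending the identity on~$\Gamma$ with $\im \hat f = \Gamma$. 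Each such~$\hat f$ restricts to the identity on~$\Gamma = \im\hat f$, so $\hat f \cdot \hat f = \hat f$, i.e.\ $\hat f$~is idempotent, and $\im \hat f = \Gamma \cong \Gamma$. Transporting along an isomorphism $\Gamma_{\infty} \cong R$ gives $2^{\aleph_{0}}$~idempotent endomorphisms of~$R$ with image isomorphic to~$\Gamma$, establishing both the ``if'' direction of~(i) and the whole of~(ii) simultaneously.

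The only point needing a little care is checking that the extensions produced by Lemma~\ref{lem:map-extend}(ii) really are idempotent: one must be sure that when $\im f = \im(\mathrm{id}_\Gamma) = \Gamma$ the construction can and does fix every vertex of~$\Gamma$ (which it does, since the restriction to~$\Gamma$ equals~$f = \mathrm{id}_\Gamma$) and sends every new vertex into~$\Gamma$ (which is exactly the $\im\hat f = \im f$ assertion of part~(ii)); together these force $\hat f|_{\im\hat f}$ to be the identity and hence $\hat f^2 = \hat f$. I do not expect a genuine obstacle here — the heavy lifting is already done in Lemma~\ref{lem:map-extend} — so the main subtlety is simply the bookkeeping that links ``$\im\hat f = \im f$ together with $\hat f|_\Gamma = \mathrm{id}$'' to idempotency, plus noting that distinct extensions remain distinct after transport to~$R$, so that the cardinality~$2^{\aleph_0}$ is preserved.
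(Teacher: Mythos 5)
Your proposal is correct and follows essentially the same route as the paper: the forward direction is the routine observation that algebraic closure passes to the image of an endomorphism (you use idempotency to fix $A$ pointwise, the paper notes it for arbitrary endomorphisms, but the argument is the same), and the converse together with part~(ii) is obtained exactly as in the paper by applying Lemma~\ref{lem:map-extend}(ii) to the identity map on~$\Gamma$ inside $\Gamma_{\infty}\cong R$ and noting that an extension fixing $\Gamma$ pointwise with image~$\Gamma$ is idempotent.
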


\begin{proof}
  An existentially closed graph is certainly also algebraically
  closed and it is easy to see that this latter property is inherited
  by images of an endomorphism~$f$.

  Conversely, if $\Gamma$~is algebraically closed, let $f$~be one of
  the extensions to~$\Gamma_{\infty}$, given by
  Lemma~\ref{lem:map-extend}(ii), of the identity map $\Gamma \to
  \Gamma$.  We identify~$\Gamma_{\infty}$ with~$R$.  Then the
  restriction of~$f$ to $\im f = \Gamma$ is the identity and so $f$~is
  an idempotent endomorphism of~$R$ with image isomorphic to~$\Gamma$.
  Moreover, as observed in Lemma~\ref{lem:map-extend}, there are
  actually $2^{\aleph_{0}}$~many such idempotent endomorphisms.
  Consequently, part~(ii) also follows.
\end{proof}

We may now establish that any suitable group arises in $2^{\aleph_{0}}$~ways
as a maximal subgroup of~$\End R$.

\begin{thm}
  \label{thm:R-reg-classes}
  Let $R$~denote the countable universal graph.
  \begin{enumerate}
  \item Let\/ $\Gamma$~be a countable graph.  Then there exist
    $2^{\aleph_{0}}$~distinct regular $\scD$\nbd classes of\/~$\End R$
    whose group $\scH$\nbd classes are isomorphic to~$\Aut \Gamma$.
  \item Every regular $\scD$\nbd class of\/~$\End R$ contains
    $2^{\aleph_{0}}$~distinct group $\scH$\nbd classes.
  \end{enumerate}
\end{thm}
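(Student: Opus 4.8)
For part~(i), the plan is to combine Theorem~\ref{thm:ac-graphs} with the idempotent construction of Theorem~\ref{thm:graph-idemp} and the Green's-relations machinery of Lemma~\ref{lem:reg-classes} and Proposition~\ref{prop:Schutzgps}(iii). First I would invoke Theorem~\ref{thm:ac-graphs} to produce $2^{\aleph_0}$ pairwise non-isomorphic countable algebraically closed graphs $\Delta_S$, each with $\Aut \Delta_S \cong \Aut \Gamma$. For each such $\Delta_S$, Theorem~\ref{thm:graph-idemp}(i) gives an idempotent endomorphism $e_S$ of $R$ with $\im e_S \cong \Delta_S$, and Proposition~\ref{prop:Schutzgps}(iii) tells us the group $\scH$\nbd class $H_{e_S}$ satisfies $H_{e_S} \cong \Aut(\im e_S) = \Aut \Delta_S \cong \Aut \Gamma$. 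It remains to check that as $S$ varies we land in $2^{\aleph_0}$ \emph{distinct} $\scD$\nbd classes: since $e_S$ is regular (idempotent), Lemma~\ref{lem:reg-classes}(iii) says $e_S \mathrel{\scD} e_T$ iff $\im e_S \cong \im e_T$, i.e.\ iff $\Delta_S \cong \Delta_T$, which fails for $S \neq T$ by construction. Hence the $\scD$\nbd classes are pairwise distinct, giving the required $2^{\aleph_0}$ regular $\scD$\nbd classes, each with group $\scH$\nbd classes isomorphic to $\Aut \Gamma$.

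For part~(ii), fix a regular $\scD$\nbd class $D$; I want $2^{\aleph_0}$ distinct group $\scH$\nbd classes inside it. Pick a group $\scH$\nbd class in $D$ — equivalently (by \cite[Proposition~2.3.2]{Howie}) an idempotent $e \in D$ — and set $\Delta = \im e = \langle Ve \rangle$, which is algebraically closed by Theorem~\ref{thm:graph-idemp}(i). By Lemma~\ref{lem:reg-classes}(iii), the $\scH$\nbd classes of idempotents in $D$ are exactly those $H_{e'}$ with $\im e' \cong \Delta$. The plan is to exhibit $2^{\aleph_0}$ idempotents $e'$ of $R$ with $\im e' \cong \Delta$ but lying in pairwise distinct $\scH$\nbd classes. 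For this I would use Lemma~\ref{lem:map-extend}(ii): realise $R$ as $\Delta_\infty$, the copy of $R$ built around $\Delta$, and extend the identity map $\Delta \to \Delta$ in $2^{\aleph_0}$ distinct ways to idempotents $e'$ fixing $\Delta$ pointwise with $\im e' = \Delta$ (as a literal subgraph, not just up to isomorphism). Two such idempotents $e'$, $e''$ with the same image $\Delta$ and with $e'|_\Delta = e''|_\Delta = \mathrm{id}_\Delta$ are $\scL$\nbd related (same image, Lemma~\ref{lem:reg-classes}(i)), so they lie in the same $\scL$\nbd class; to separate their $\scH$\nbd classes I need $\ker e' \neq \ker e''$ (Lemma~\ref{lem:reg-classes}(ii)). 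The freedom in Lemma~\ref{lem:map-extend}(ii) is precisely a choice, at each stage, of which vertex $w \in \Delta$ to send each newly adjoined vertex $v_{n+1}$ to; different choices give different kernels. So I would argue that one can make $2^{\aleph_0}$ choices pairwise differing on the image of at least one adjoined vertex, hence with pairwise distinct kernels, hence pairwise distinct $\scH$\nbd classes, all inside the fixed $D$.

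The main obstacle I anticipate is the last bookkeeping step in part~(ii): ensuring that the uncountably many extensions really do have pairwise distinct kernels rather than merely being "uncountably many functions." One must check that at some fixed stage $n$ there are at least two (in fact, since $\Delta$ is algebraically closed and hence infinite, infinitely many) legitimate choices of target vertex $w$ for $v_{n+1}$, and that differing choices there force $(v_{n+1}, u) \in \ker e'$ for different $u$ — so the kernels genuinely differ. Packaging this into a clean $2^{\aleph_0}$ count (e.g.\ by encoding an arbitrary binary sequence into the choices at a fixed infinite set of stages where at least two options are always available) is the delicate point; everything else reduces to citing Lemma~\ref{lem:reg-classes}, Theorem~\ref{thm:graph-idemp}, Theorem~\ref{thm:ac-graphs}, and Lemma~\ref{lem:map-extend}. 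A minor secondary point is confirming that each $e'$ produced this way is genuinely an \emph{idempotent} endomorphism of $R$ (which holds because it restricts to the identity on its image $\Delta$), so that each $H_{e'}$ is a group $\scH$\nbd class.
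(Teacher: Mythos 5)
Your proposal is correct and follows essentially the same route as the paper: part~(i) is the paper's argument verbatim (Theorem~\ref{thm:ac-graphs} $+$ Theorem~\ref{thm:graph-idemp}(i) $+$ Proposition~\ref{prop:Schutzgps}(iii), with distinctness of $\scD$\nbd classes from Lemma~\ref{lem:reg-classes}(iii)), and part~(ii) is the paper's argument with Theorem~\ref{thm:graph-idemp}(ii) unwound into its underlying use of Lemma~\ref{lem:map-extend}(ii). The ``main obstacle'' you flag in part~(ii) is in fact automatic: two distinct idempotents are never $\scH$\nbd related (if both act as the identity on the common image $\Delta$ and $ve' \neq ve''$ for some vertex~$v$, then $(v,ve') \in \ker e' \setminus \ker e''$), so the $2^{\aleph_{0}}$ distinct idempotents with image isomorphic to~$\im f$ immediately give $2^{\aleph_{0}}$ distinct group $\scH$\nbd classes.
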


\begin{proof}
  (i)~By Theorem~\ref{thm:ac-graphs}, there are
  $2^{\aleph_{0}}$~pairwise non-isomorphic countable algebraically
  closed graphs with automorphism group isomorphic to that
  of~$\Gamma$.  For each such graph~$\Delta$, there is an idempotent
  endomorphism~$f_{\Delta}$ of~$R$ with $\im f_{\Delta} \cong \Delta$
  by Theorem~\ref{thm:graph-idemp}(i).  The idempotents~$f_{\Delta}$
  belong to distinct $\scD$\nbd classes, by
  Lemma~\ref{lem:reg-classes}(iii).  By
  Proposition~\ref{prop:Schutzgps}(iii), the corresponding group
  $\scH$\nbd class satisfies $H_{f_{\Delta}} \cong \Aut \Delta \cong
  \Aut \Gamma$.  This establishes part~(i).

  (ii)~Let $D_{f}$~be a regular $\scD$\nbd class in~$\End R$ with
  $f$~an idempotent endomorphism belonging to this class.  Let $\Gamma
  = \im f$.  Then by Theorem~\ref{thm:graph-idemp}, there exist
  $2^{\aleph_{0}}$~idempotent endomorphisms of~$R$ with image
  isomorphic to~$\Gamma$.  Each such endomorphism is $\scD$\nbd
  related to~$f$ by Lemma~\ref{lem:reg-classes}(iii) but lies in a
  distinct $\scH$\nbd class by parts (i)~and~(ii) of that lemma.
\end{proof}

We now turn to the $\scL$- and $\scR$\nbd classes in the endomorphism
monoid of~$R$.  We use the graph~$\Gamma^{\sharp}$ constructed from a
countably infinite graph~$\Gamma$ by, loosely speaking, replacing
every edge in~$\Gamma$ by a copy of the complete bipartite
graph~$K_{2,2}$.  More precisely, if $\Gamma = (V,E)$ and $V =
\set{v_{i}}{i \in \N}$, then define $\Gamma^{\sharp} =
(V^{\sharp},E^{\sharp})$ where
\[
V^{\sharp} = \set{ v_{i,r} }{ i \in \N, \; r\in\{0,1\} }
\]
and
\[
E^{\sharp} = \set{ (v_{i,r},v_{j,s}) }{ (v_i,v_j)\in E, \; r,s \in
  \{0,1\}}.
\]
The following observations are straightforward.

\begin{lemma}
  \label{lem:sharp}
  Let\/ $\Gamma$~be any countably infinite graph.
  \begin{enumerate}
  \item If\/ $\Gamma$~is algebraically closed, then so
    is~$\Gamma^{\sharp}$.
  \item For any sequence $(b_{i})_{i\in\N}$ with $b_{i} \in \{0,1\}$
    for all~$i$, the subgraph of $\Gamma^{\sharp}$ induced by the
    vertices $\set{v_{i,b_{i}}}{i\in\N}$ is isomorphic to~$\Gamma$. \qed
  \end{enumerate}
\end{lemma}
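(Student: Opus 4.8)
The two statements in Lemma~\ref{lem:sharp} are both quite elementary, so the plan is short. For part~(i), I would argue as follows. Suppose $\Gamma$ is algebraically closed and let $A$ be a finite subset of $V^{\sharp}$; I want a vertex of $\Gamma^{\sharp}$ adjacent to every member of $A$. Project $A$ down to the finite set $A' = \set{v_{i}}{v_{i,r} \in A \text{ for some } r}$ of vertices of $\Gamma$. Algebraic closure of $\Gamma$ gives a vertex $v_{j}$ of $\Gamma$ adjacent in $\Gamma$ to every member of $A'$; here I should note that, since $\Gamma$ is infinite and algebraically closed, I may pick such a $v_{j}$ not lying in $A'$ itself (there are in fact infinitely many candidates, of which only finitely many are excluded). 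Then by the definition of $E^{\sharp}$, the vertex $v_{j,0}$ (say) is adjacent in $\Gamma^{\sharp}$ to $v_{i,r}$ whenever $v_{i}$ is adjacent to $v_{j}$ in $\Gamma$ and $r \in \{0,1\}$; in particular $v_{j,0}$ is adjacent to every vertex of $A$. Hence $\Gamma^{\sharp}$ is algebraically closed. (The only mild subtlety is checking that the chosen witness vertex of $\Gamma^{\sharp}$ is genuinely \emph{outside} $A$, which is why one wants $v_{j} \notin A'$; alternatively one observes $\Gamma^{\sharp}$ has no loops, so a vertex adjacent to every member of $A$ cannot itself lie in $A$.)

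For part~(ii), fix a sequence $(b_{i})_{i \in \N}$ with $b_{i} \in \{0,1\}$ and let $U = \set{v_{i,b_{i}}}{i \in \N}$. Define $\theta \colon V \to U$ by $v_{i}\theta = v_{i,b_{i}}$; this is a bijection onto $U$. I claim $\theta$ is an isomorphism from $\Gamma$ onto the induced subgraph $\langle U \rangle$ of $\Gamma^{\sharp}$. Indeed, $(v_{i},v_{j}) \in E$ implies $(v_{i,b_{i}}, v_{j,b_{j}}) \in E^{\sharp}$ directly from the definition of $E^{\sharp}$ (take $r = b_{i}$, $s = b_{j}$); conversely, if $(v_{i,b_{i}}, v_{j,b_{j}}) \in E^{\sharp}$, then by definition of $E^{\sharp}$ this forces $(v_{i}, v_{j}) \in E$. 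So $\theta$ and $\theta^{-1}$ are both homomorphisms, i.e.\ $\theta$ is an embedding that is onto $\langle U \rangle$, hence an isomorphism. This proves part~(ii).

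I do not anticipate any real obstacle here; both parts are direct unravelling of the definitions of $V^{\sharp}$ and $E^{\sharp}$. The only point requiring the smallest amount of care is the one flagged above in part~(i): the witness vertex produced must be a vertex of $\Gamma^{\sharp}$ distinct from the members of $A$, which is immediate from irreflexivity of the edge relation (no loops), so in fact no special choice of $v_{j}$ is needed. Since the lemma is only used downstream to feed $\Gamma^{\sharp}$ into later constructions, no sharper statement (e.g.\ about automorphisms of $\Gamma^{\sharp}$) is needed at this point, and the short argument above suffices.
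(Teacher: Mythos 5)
Your proof is correct: the paper states Lemma~\ref{lem:sharp} without proof (``the following observations are straightforward''), and your direct unravelling of the definitions of $V^{\sharp}$ and $E^{\sharp}$ is exactly the intended argument. Your observation that irreflexivity automatically forces the witness $v_{j}$ to lie outside $A'$ correctly disposes of the only point needing care in part~(i).
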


\begin{thm}
  \label{thm:graph-regLR}
  Every regular $\scD$\nbd class of the endomorphism monoid of the
  countable universal graph~$R$ contains $2^{\aleph_{0}}$~many
  $\scL$- and $\scR$\nbd classes.
\end{thm}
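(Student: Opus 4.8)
The plan is to realise the given regular $\scD$-class as the $\scD$-class of all idempotents whose image is isomorphic to a fixed countably infinite algebraically closed graph~$\Gamma$, and then to produce $2^{\aleph_{0}}$ such idempotents with pairwise distinct vertex-images (for the $\scL$-classes) and $2^{\aleph_{0}}$ with pairwise distinct kernels (for the $\scR$-classes), using Lemma~\ref{lem:reg-classes}. Concretely, fix a regular $\scD$-class $D$ of $\End R$. It contains an idempotent~$e$, and $\Gamma := \im e$ is algebraically closed (algebraic closure passes to images of endomorphisms of the existentially closed graph~$R$) and hence countably infinite. By Lemma~\ref{lem:reg-classes}(iii), $D$~is precisely the $\scD$-class of the idempotents of $\End R$ with image isomorphic to~$\Gamma$, and the numbers of $\scL$- and $\scR$-classes inside~$D$ are isomorphism invariants of~$R$, so we may freely replace~$R$ by any convenient countable existentially closed graph. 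The upper bound $2^{\aleph_{0}}$ is immediate since $|\End R| \leq \aleph_{0}^{\aleph_{0}} = 2^{\aleph_{0}}$, so it remains to build the required idempotents.

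For the $\scL$-classes, take $R$ to be $(\Gamma^{\sharp})_{\infty}$, built around~$\Gamma^{\sharp}$ as in the construction preceding Lemma~\ref{lem:map-extend} (it is countable and existentially closed, hence a copy of~$R$), so that $\Gamma^{\sharp}$~is an algebraically closed subgraph of~$R$ by Lemma~\ref{lem:sharp}(i). For each sequence $(b_{i})_{i\in\N}$ with $b_{i} \in \{0,1\}$, put $W = \set{v_{i,b_{i}}}{i \in \N}$; then $\langle W \rangle \cong \Gamma$ by Lemma~\ref{lem:sharp}(ii), hence is algebraically closed, and the map $\rho \colon \Gamma^{\sharp} \to \langle W \rangle$ sending $v_{i,r} \mapsto v_{i,b_{i}}$ is a graph homomorphism fixing~$W$ pointwise (no edge lies inside a fibre $\{v_{i,0},v_{i,1}\}$ as $\Gamma$~is irreflexive). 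Feeding~$\rho$ into the extension recipe before Lemma~\ref{lem:map-extend}, and using that $\im\rho = \langle W\rangle$ is an algebraically closed subgraph of~$R$ so that at every stage the new vertices may be sent into~$\langle W\rangle$, yields an endomorphism~$\hat\rho$ of~$R$ restricting to~$\rho$ on~$\Gamma^{\sharp}$ with $\im\hat\rho = \langle W\rangle$; since $\hat\rho$ fixes~$W$ pointwise it is idempotent. Distinct sequences give distinct sets~$W$, all inducing copies of~$\Gamma$, so by Lemma~\ref{lem:reg-classes}(i),(iii) the $2^{\aleph_{0}}$ idempotents~$\hat\rho$ lie in~$D$ but in pairwise distinct $\scL$-classes.

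For the $\scR$-classes, instead take $R = \Gamma_{\infty}$ built around~$\Gamma$ itself and examine the first layer $\Gamma_{1} = \mathcal{G}(\Gamma)$, whose new vertices $v_{1},v_{2},\dots$ satisfy: $v_{i}$~is adjacent precisely to the members of a finite set $A_{i} \subseteq V(\Gamma)$. By algebraic closure of~$\Gamma$, the set $C_{i}$ of vertices of~$\Gamma$ adjacent to every element of~$A_{i}$ is infinite, for each~$i$. For a choice function $\sigma \in \prod_{i} C_{i}$, the map $r_{\sigma} \colon \Gamma_{1} \to \Gamma$ fixing~$\Gamma$ and sending $v_{i} \mapsto \sigma(i)$ is a graph homomorphism, so applying the extension recipe repeatedly through $\Gamma_{2} = \mathcal{G}(\Gamma_{1}), \Gamma_{3}, \dots$, keeping images inside~$\Gamma$, gives an idempotent endomorphism~$\hat r_{\sigma}$ of~$R$ with $\im\hat r_{\sigma} = \Gamma$ and $v_{i}\hat r_{\sigma} = \sigma(i)$ for all~$i$. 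If $\sigma \neq \sigma'$, choose~$i$ with $\sigma(i) \neq \sigma'(i)$; then $(v_{i},\sigma(i)) \in \ker\hat r_{\sigma}$ (both entries map to~$\sigma(i)$) while $(v_{i},\sigma(i)) \notin \ker\hat r_{\sigma'}$ (the entries map to $\sigma'(i)$ and $\sigma(i)$, which differ), so $\ker\hat r_{\sigma} \neq \ker\hat r_{\sigma'}$. Since $\prod_{i}C_{i}$ has cardinality $2^{\aleph_{0}}$, Lemma~\ref{lem:reg-classes}(ii),(iii) supplies $2^{\aleph_{0}}$ pairwise distinct $\scR$-classes inside~$D$, completing the proof.

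The one step requiring genuine care is the iterated use of the extension recipe preceding Lemma~\ref{lem:map-extend}: one must check it applies with a retraction onto an algebraically closed subgraph as its seed, and that it can be run so as to preserve the image \emph{exactly} at every layer, since this is what makes $\hat\rho$ and $\hat r_{\sigma}$ idempotent with the prescribed image, the hypothesis under which Lemma~\ref{lem:reg-classes} characterises $\scL$-, $\scR$- and $\scD$-equivalence. The remaining bookkeeping (that each $C_{i}$ is infinite, that there are infinitely many~$i$, and hence that the relevant product has size $2^{\aleph_{0}}$) is routine.
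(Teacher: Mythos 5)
Your proposal is correct and follows essentially the same route as the paper: for the $\scL$\nbd classes both arguments build $R$ around $\Gamma^{\sharp}$ and use the retractions $v_{i,r}\mapsto v_{i,b_{i}}$ to produce $2^{\aleph_{0}}$ idempotents with distinct vertex-images all isomorphic to~$\Gamma$, and for the $\scR$\nbd classes both produce $2^{\aleph_{0}}$ idempotents with image exactly~$\Gamma$ but distinct kernels (you unpack the extension recipe of Lemma~\ref{lem:map-extend}(ii) explicitly where the paper simply cites it). The only cosmetic difference is that the paper realises the $\scL$\nbd class witnesses as composites $g\phi_{\mathbf{b}}$ with a fixed idempotent~$g$ of image~$\Gamma^{\sharp}$, whereas you extend each retraction directly; the two constructions yield the same endomorphisms.
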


\begin{proof}
  Fix an idempotent endomorphism~$f$ of~$R$ and let $\Gamma$~be the
  image of~$f$, which is algebraically closed by
  Theorem~\ref{thm:graph-idemp}.  First assume that $R$~is constructed
  as~$\Gamma_{\infty}$ as described above by taking $\Gamma_{0} =
  \Gamma$.  Lemma~\ref{lem:map-extend}(ii) tells us that there
  $2^{\aleph_{0}}$~extensions to~$R$ of the identity map on~$\Gamma$
  with the same image and all such extensions of $\scD$\nbd related
  to~$f$ by Lemma~\ref{lem:reg-classes}(iii).  However, as idempotents
  with the same image, they have distinct kernels and so are not
  $\scR$\nbd related.

  On the other hand, we may start with the same graph~$\Gamma$,
  form~$\Gamma^{\sharp}$ as described above and then
  construct~$\Gamma_{\infty} \cong R$ now taking $\Gamma_{0} =
  \Gamma^{\sharp}$.  As $\Gamma^{\sharp}$~is also algebraically
  closed, we can extend the identity map on~$\Gamma^{\sharp}$ to an
  idempotent endomorphism~$g$ of~$R$ with image equal
  to~$\Gamma^{\sharp}$.  Now let $\mathbf{b} = (b_{i})_{i \in I}$ be
  an arbitrary sequence with $b_{i} \in \{ 0,1 \}$ for each~$i$ and
  define $\phi_{\mathbf{b}} \colon \Gamma^{\sharp} \to
  \Gamma^{\sharp}$ by
  \[
  v_{i,r}\phi_{\mathbf{b}} = v_{i,b_{i}}.
  \]
  Then $\phi_{\mathbf{b}}$~is an endomorphism of~$\Gamma^{\sharp}$
  with image equal to the subgraph~$\Lambda_{\mathbf{b}} \cong \Gamma$
  induced by the set of vertices $\set{v_{i,b_{i}}}{i \in \N}$.  Note
  that~$g\phi_{\mathbf{b}}$ is an idempotent endomorphism of~$R$ with
  image equal to~$\Lambda_{\mathbf{b}}$ and hence is $\scD$\nbd
  related to~$f$ by Lemma~\ref{lem:reg-classes}(iii).  As we
  permit~$\mathbf{b}$ to vary, we produce endomorphisms that are not
  $\scL$\nbd related, by Lemma~\ref{lem:reg-classes}(i), since if
  $\mathbf{b} \neq \mathbf{c}$ then $\Lambda_{\mathbf{b}} \neq
  \Lambda_{\mathbf{c}}$.

  This establishes that the $\scD$\nbd class of~$f$ has both
  $2^{\aleph_{0}}$~many $\scL$- and $\scR$\nbd classes within it.
\end{proof}

\begin{comment}
  It is possible to construct a collection~$\mathcal{P}$ of
  $2^{\aleph_{0}}$~subsets of~$\N$ such that for all distinct pairs
  $S,T \in \mathcal{P}$ and all positive integers~$k$, the
  translate~$S+k$ is not contained in~$T$.  Consequently, the
  graph~$L_{S}$ cannot be embedded in any~$L_{T}$ for $S,T \in
  \mathcal{P}$.  Let us write $f_{S}$~for an idempotent with image
  isomorphic to the graph~$\Delta_{S}$ as defined in the proof of
  Theorem~\ref{thm:ac-graphs}.  It then follows that $\im
  f_{S}$~cannot be embedded in~$\im f_{T}$ for distinct $S,T \in
  \mathcal{P}$ and this is sufficient to establish that $\End R$~has
  $2^{\aleph_{0}}$ many $\mathscr{J}$\nbd classes.
  See~\cite[Theorem~3.32]{JayThesis} for more details.
\end{comment}

\begin{thm}
  \label{thm:graph-nonregLR}
  Let $\Gamma$~be any countable algebraically closed graph that is not
  isomorphic to the countable universal graph~$R$.  Then there exists
  a non-regular injective endomorphism~$f$ of~$R$ such that the
  subgraph induced by the images of the vertices under~$f$ is
  isomorphic to~$\Gamma$ and such that the $\scD$\nbd class of~$f$
  contains $2^{\aleph_{0}}$ many $\mathscr{R}$- and $\mathscr{L}$\nbd
  classes.
\end{thm}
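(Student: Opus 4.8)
The plan is to construct~$f$ explicitly using the realisation of~$R$ as $(\Gamma^{\sharp})_{\infty}$, so that the induced copies of~$\Gamma$ inside~$\Gamma^{\sharp}$ supplied by Lemma~\ref{lem:sharp} are at hand, and then to populate the $\scD$\nbd class of~$f$ with $2^{\aleph_{0}}$~many $\scL$\nbd classes by moving the image around with automorphisms of~$R$, and with $2^{\aleph_{0}}$~many $\scR$\nbd classes by pre\nbd composing~$f$ with suitable non\nbd injective endomorphisms of~$R$.  To begin, note that an algebraically closed graph is infinite, so~$\Gamma$ is countably infinite, and hence (Lemma~\ref{lem:sharp}(i)) so is the algebraically closed graph~$\Gamma^{\sharp}$; take $R = (\Gamma^{\sharp})_{\infty}$, with~$\Gamma^{\sharp}$ as an induced subgraph, and let~$\Lambda$ be the induced copy of~$\Gamma$ on the vertices $\set{v_{i,0}}{i \in \N}$ of~$\Gamma^{\sharp} \subseteq R$ (Lemma~\ref{lem:sharp}(ii)).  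By Corollary~\ref{cor:hom-from-random} there is a homomorphism $f \colon R \to \Lambda$ which is a bijection on vertices; regarding~$\Lambda$ as a subgraph of~$R$, this is an injective endomorphism of~$R$ and the subgraph induced by $Vf = V\Lambda$ is $\Lambda \cong \Gamma$, as required.  If~$f$ were regular then $\im f = \langle Vf \rangle = \Lambda$ by Proposition~\ref{prop:reg-image}, which, since~$f$ is a vertex bijection and a homomorphism onto~$\Lambda$, would force~$f$ to be an isomorphism from~$R$ to $\Lambda \cong \Gamma$ --- contradicting $\Gamma \not\cong R$.  Hence~$f$ is non\nbd regular.  (This is the only point where the hypothesis $\Gamma \not\cong R$ is used.)

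For the $\scL$\nbd classes, fix $T \subseteq \N$.  Since adjacency in~$\Gamma^{\sharp}$ depends only on the first subscript, the permutation interchanging~$v_{i,0}$ and~$v_{i,1}$ for each $i \in T$ is an automorphism of~$\Gamma^{\sharp}$, so by Lemma~\ref{lem:map-extend}(iii) it extends to an automorphism~$\alpha_{T}$ of~$R$.  As~$\alpha_{T}$ is invertible in~$\End R$, the endomorphism~$f\alpha_{T}$ is $\scR$\nbd related, and hence $\scD$\nbd related, to~$f$, and its image is the vertex set $(V\Lambda)\alpha_{T} = \set{v_{i,0}}{i \notin T} \cup \set{v_{i,1}}{i \in T}$.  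As~$T$ ranges over all subsets of~$\N$ these vertex sets are pairwise distinct, so by Lemma~\ref{lem:class-facts}(i) the $2^{\aleph_{0}}$ endomorphisms~$f\alpha_{T}$ lie in pairwise distinct $\scL$\nbd classes, all of which lie in the $\scD$\nbd class of~$f$.

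The $\scR$\nbd classes are the harder half.  Since~$f$ is injective, every endomorphism $\scR$\nbd related to~$f$ is also injective by Lemma~\ref{lem:class-facts}(ii), so the remaining $\scR$\nbd classes in the $\scD$\nbd class of~$f$ can only be reached through non\nbd injective maps.  The key observation is this: if $u \in \End R$ is surjective and admits a section $v \in \End R$ (that is, $vu = \1$), then~$uf$ and~$f$ generate the same left ideal of~$\End R$ (as $uf = u\cdot f$ and $f = v\cdot(uf)$), so $uf \scL f$ and in particular $uf \scD f$, while $\ker(uf) = \ker u$ because~$f$ is injective.  It therefore suffices to produce $2^{\aleph_{0}}$ surjective endomorphisms of~$R$ that admit sections and have pairwise distinct kernels.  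Since~$R$ is itself algebraically closed, Theorem~\ref{thm:graph-idemp} provides an idempotent endomorphism of~$R$ with image isomorphic to~$R$; let~$D$ be its $\scD$\nbd class, which is regular.  By Theorem~\ref{thm:graph-regLR}, $D$~contains $2^{\aleph_{0}}$~many $\scR$\nbd classes, each of which contains an idempotent (\cite[Proposition~2.3.2]{Howie} and its dual); these idempotents~$e_{j}$ (for~$j$ in an index set of cardinality~$2^{\aleph_{0}}$) have pairwise distinct kernels by Lemma~\ref{lem:class-facts}(ii), and each satisfies $\langle \im e_{j} \rangle \cong R$ by Lemma~\ref{lem:reg-classes}(iii) and Proposition~\ref{prop:reg-image}.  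Choosing an isomorphism $\beta_{j} \colon R \to \langle \im e_{j} \rangle$ for each~$j$ --- an injective endomorphism of~$R$ --- the composite $u_{j} = e_{j}\beta_{j}^{-1}$ is an endomorphism of~$R$ (a composite of homomorphisms); it is surjective with section~$\beta_{j}$ (using that~$e_{j}$ is the identity on its image $\im e_{j} = \langle\im e_{j}\rangle$, one checks $\beta_{j}u_{j} = \1$), and $\ker u_{j} = \ker e_{j}$.  Thus the $2^{\aleph_{0}}$ endomorphisms~$u_{j}f$ are $\scD$\nbd related to~$f$ with pairwise distinct kernels, so by Lemma~\ref{lem:class-facts}(ii) they lie in pairwise distinct $\scR$\nbd classes inside the $\scD$\nbd class of~$f$.

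Putting the two halves together shows that the $\scD$\nbd class of~$f$ contains at least, and hence (as $\order{\End R} = 2^{\aleph_{0}}$) exactly, $2^{\aleph_{0}}$~many $\scR$- and $\scL$\nbd classes.  I expect the main obstacle to be the treatment of the $\scR$\nbd classes: the rigidity of an injective map forces trivial kernels all along its $\scR$\nbd class, so one genuinely has to construct non\nbd injective surjective endomorphisms of~$R$ admitting sections, and it is precisely here that the structural input of Theorem~\ref{thm:graph-regLR} (together with the correspondence, via idempotents with image isomorphic to~$R$, between such endomorphisms and regular $\scD$\nbd classes) is required.
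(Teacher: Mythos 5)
Your proposal is correct.  The construction of~$f$ (building $R$ around~$\Gamma^{\sharp}$, using Lemma~\ref{lem:sharp} and Corollary~\ref{cor:hom-from-random}), the non\nbd regularity argument via Proposition~\ref{prop:reg-image} and $\Gamma \not\cong R$, and the production of $2^{\aleph_{0}}$~many $\scL$\nbd classes by extending the part\nbd swapping automorphisms of~$\Gamma^{\sharp}$ to automorphisms of~$R$ all coincide with the paper's proof (your~$\alpha_{T}$ are exactly the paper's~$\hat{\psi}_{\mathbf{b}}$).  Where you genuinely diverge is the count of $\scR$\nbd classes.  The paper builds~$R$ around a disjoint union $R' \disjunion E$ of a copy of~$R$ and a countable independent set, extends a fixed isomorphism $g \colon R' \to R$ together with an arbitrary map $h \colon E \to R$ to an endomorphism~$\xi_{h}$ via Lemma~\ref{lem:map-extend}(i), and uses $g^{-1}\xi_{h}f = f$ to place~$\xi_{h}f$ in the $\scL$\nbd class of~$f$ while $\ker \xi_{h}f = \ker \xi_{h}$ ranges over $2^{\aleph_{0}}$~kernels.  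You instead harvest the required maps from Theorem~\ref{thm:graph-regLR}: $2^{\aleph_{0}}$~idempotents~$e_{j}$ with image isomorphic to~$R$ lying in distinct $\scR$\nbd classes, each converted into a surjection $u_{j} = e_{j}\beta_{j}^{-1}$ admitting the section~$\beta_{j}$.  Both arguments rest on the same mechanism --- a surjective endomorphism with a section, precomposed with the injective~$f$, stays in the $\scL$\nbd class of~$f$ but realises the kernel of the surjection --- and your version buys economy by reusing already\nbd established structure of~$\End R$ rather than making a fresh construction, at the modest cost of invoking the standard fact that every $\scR$\nbd class of a regular $\scD$\nbd class contains an idempotent.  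One small correction: to conclude that idempotents lying in distinct $\scR$\nbd classes have pairwise distinct kernels you need Lemma~\ref{lem:reg-classes}(ii) (applicable since idempotents are regular), not Lemma~\ref{lem:class-facts}(ii), which yields only the converse implication --- the one you correctly use afterwards to separate the~$u_{j}f$ into distinct $\scR$\nbd classes.
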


\begin{proof}
  From the graph~$\Gamma$, first build~$\Gamma^{\sharp}$ as above and
  take $\Gamma_{0} = \Gamma^{\sharp}$ when building~$\Gamma_{\infty}
  \cong R$ as described earlier.  We may thus assume that the
  countable universal graph~$R$ contains amongst its vertices
  the~$v_{i,r}$.  Write $V$~for the set of vertices of~$R$.  Let
  $\Lambda_{0}$~denote the subgraph of~$\Gamma^{\sharp}$ induced by
  the set of vertices $V_{0} = \set{ v_{i,0} }{i \in \N}$.  Then, by
  Lemma~\ref{lem:sharp}(ii), $\Lambda_{0} \cong \Gamma \not\cong R$.
  There is therefore, by Corollary~\ref{cor:hom-from-random}, a
  bijection $V \to V_{0}$ defining a graph homomorphism~$f \colon R
  \to \Lambda_{0}$.  By construction, $\langle Vf \rangle = \langle
  V_{0} \rangle \cong \Gamma$.  We shall view~$f$ as an endomorphism
  of~$R$ via the constructed embedding of~$\Lambda_{0}$ in~$R$.  Since
  $\Lambda_{0} \not\cong R$, there must exist a pair of vertices
  $u$~and~$v$ in~$R$ that are not adjacent but such that $(uf,vf)$~is
  an edge in~$\Lambda_{0}$.  Consequently, $f$~is not regular by
  Proposition~\ref{prop:reg-image}.

  A variant of the argument used in Theorem~\ref{thm:graph-regLR}
  shows that the $\mathscr{R}$\nbd class of~$f$ contains
  $2^{\aleph_{0}}$ many $\scH$\nbd classes, i.e., that it intersects
  $2^{\aleph_{0}}$ $\mathscr{L}$\nbd classes in the $\scD$\nbd class
  of~$f$.  Let $\mathbf{b}=(b_{i})_{i\in\N}$ be an arbitrary sequence
  with $b_{i} \in \{0,1\}$ for each~$i$ and define
  $\psi_{\mathbf{b}} \colon \Gamma^{\sharp}\to \Gamma^{\sharp}$ by
  \[
  v_{i,j}\psi_{\mathbf{b}} = v_{i,j+b_{i}}
  \]
  (where, in the subscript, we perform addition in~$\{0,1\}$
  modulo~$2$).  It follows from the definition of~$\Gamma^{\sharp}$
  that $\psi_{\mathbf{b}}$~is an automorphism of this graph.  By
  Lemma~\ref{lem:map-extend}(iii), $\psi_{\mathbf{b}}$~can be
  extended to an automorphism~$\hat{\psi}_{\mathbf{b}}$ of~$R$.
  Certainly $f\hat{\psi}_{\mathbf{b}}$~is $\mathscr{R}$\nbd related
  to~$f$.  Now $Vf\hat{\psi}_{\mathbf{b}} = V_{0}\psi_{\mathbf{b}}$
  and so, using Lemma~\ref{lem:class-facts}(i),
  $f\hat{\psi}_{\mathbf{b}}$~and~$f\hat{\psi}_{\mathbf{c}}$ are not
  $\mathscr{L}$\nbd related if $\mathbf{b}$~and~$\mathbf{c}$ are
  different sequences, since $V_{0}\psi_{\mathbf{b}} \neq
  V_{0}\psi_{\mathbf{c}}$.  It follows that the $\mathscr{R}$-class
  of~$f$ contains $2^{\aleph_{0}}$ non-$\mathscr{L}$\nbd related
  elements, as required.

  Now let $\Delta$~denote the graph that is the disjoint union of a
  copy~$R'$ of the countable universal graph and the empty graph~$E$
  (i.e., with no edges) on a countably infinite set of vertices.  By
  taking $\Gamma_{0} = \Delta$ in the initial step of our standard
  construction, we may assume that $\Delta = R' \disjunion E$ occurs
  as a subgraph of~$R$.  Let $g \colon R' \to R$ be a fixed
  isomorphism and let $h \colon E \to R$ be any map.  Then using
  Lemma~\ref{lem:map-extend}(i) we find an endomorphism $\xi_{h}
  \colon R \to R$ that simultaneously extends both $g$~and~$h$.  We
  continue to use the endomorphism~$f$ constructed above.  Note that
  $\xi_{h}f$~and~$f$ are $\mathscr{L}$\nbd related, for any choice
  of~$h$, since $g^{-1}\xi_{h}f = f$ (where by~$g^{-1}$ we mean the
  endomorphism of~$R$ corresponding to the inverse~$R \to R'$ of~$g$).

  Observe $\ker \xi_{h}f = \ker \xi_{h}$ since $f$~is injective.
  Therefore, if $h,k \colon E \to R$ are chosen with $\ker h \neq \ker
  k$, then $\xi_{h}f$~and~$\xi_{k}f$ are not $\mathscr{R}$\nbd
  related, by Lemma~\ref{lem:class-facts}(ii).  As there are
  $2^{\aleph_{0}}$~possible kernels for the map~$h$, we conclude the
  $\scD$\nbd class of~$f$ indeed contains $2^{\aleph_{0}}$~many
  $\mathscr{R}$\nbd classes.
\end{proof}

\begin{cor}
  \label{cor:graph-Duncount}
  There are $2^{\aleph_{0}}$~non-regular $\scD$\nbd classes in~$\End
  R$.
\end{cor}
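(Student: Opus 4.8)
The plan is to combine Theorem~\ref{thm:ac-graphs}, Theorem~\ref{thm:graph-nonregLR} and Lemma~\ref{lem:class-facts}(iii). First I would apply Theorem~\ref{thm:ac-graphs} to an arbitrary countable graph to obtain a family of $2^{\aleph_{0}}$~pairwise non-isomorphic countable algebraically closed graphs. Since these are pairwise non-isomorphic, at most one of them is isomorphic to the countable universal graph~$R$; discarding it if necessary still leaves a family $\{\Delta_{S}\}$ of $2^{\aleph_{0}}$~pairwise non-isomorphic countable algebraically closed graphs, none isomorphic to~$R$.

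For each such~$\Delta_{S}$ I would then invoke Theorem~\ref{thm:graph-nonregLR} with $\Gamma = \Delta_{S}$.  This produces a non-regular injective endomorphism~$f_{S}$ of~$R$ whose induced substructure on the image vertices, $\langle Vf_{S} \rangle$, is isomorphic to~$\Delta_{S}$.  As~$f_{S}$ is not regular, the $\scD$\nbd class containing~$f_{S}$ contains no regular element (by \cite[Proposition~2.3.1]{Howie}) and so is a non-regular $\scD$\nbd class of~$\End R$.

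It remains to separate these $\scD$\nbd classes.  If $f_{S}$~and~$f_{T}$ were $\scD$\nbd related, then $\langle Vf_{S} \rangle \cong \langle Vf_{T} \rangle$ by Lemma~\ref{lem:class-facts}(iii), whence $\Delta_{S} \cong \Delta_{T}$ and so $S = T$.  Thus the $2^{\aleph_{0}}$ endomorphisms~$f_{S}$ lie in pairwise distinct non-regular $\scD$\nbd classes, which is exactly the assertion.  The one place requiring a moment's care is the matching of invariants: Theorem~\ref{thm:graph-nonregLR} controls the substructure~$\langle Vf \rangle$ induced by the image vertices rather than the (possibly smaller) image structure~$\im f$, and this is precisely the isomorphism invariant of the $\scD$\nbd class furnished by Lemma~\ref{lem:class-facts}(iii); so the two results dovetail and there is no real obstacle here.
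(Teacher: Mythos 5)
Your proposal is correct and follows essentially the same route as the paper's own proof: apply Theorem~\ref{thm:ac-graphs} to get $2^{\aleph_{0}}$ pairwise non-isomorphic countable algebraically closed graphs, feed each (other than any copy of~$R$) into Theorem~\ref{thm:graph-nonregLR}, and separate the resulting $\scD$\nbd classes via Lemma~\ref{lem:class-facts}(iii). Your extra remarks --- discarding the at most one graph isomorphic to~$R$, and checking that the invariant controlled by Theorem~\ref{thm:graph-nonregLR} is exactly the induced substructure~$\langle Vf \rangle$ appearing in Lemma~\ref{lem:class-facts}(iii) --- are details the paper leaves implicit, and they are handled correctly.
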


\begin{proof}
  By Theorem~\ref{thm:ac-graphs} there are
  $2^{\aleph_{0}}$~isomorphism types of countable algebraically closed
  graphs.  By the previous theorem, for each such graph~$\Gamma$, with
  $\Gamma \not\cong R$, there is a non-regular injective
  endomorphism~$f$ with $\langle Vf \rangle \cong \Gamma$ and
  each~$\Gamma$ determines a distinct $\scD$\nbd class of~$f$ by
  Lemma~\ref{lem:class-facts}(iii).
\end{proof}

Finally, we turn to the \Schutz\ groups of $\scH$\nbd classes of
non-regular endomorphisms.  As mentioned in Section~\ref{sec:prelims},
for specific injective endomorphisms of the countable universal graph
we are able to make Proposition~\ref{prop:Schutzgps} more precise.

Let $\Gamma_{0} = (V_{0},E_{0})$ be a countable algebraically closed
graph.  Then, by Proposition~\ref{prop:characterise-ac}(i), there
exists some $F_{0} \subseteq E_{0}$ such that $(V_{0},F_{0})$~is
isomorphic to the countable universal graph~$R$.  Use~$\Gamma_{0}$ in
the initial step of the construction of~$R$.  Hence we can assume that
$R = (V,E)$ contains $\Gamma_{0} = (V_{0},E_{0})$ as a subgraph.  Let
$f \colon R \to R$ be the endomorphism that realises the isomorphism
$(V_{0},F_{0}) \cong R$; that is, $f$~is given by a bijection from~$V$
to~$V_{0}$ and from~$E$ to~$F_{0}$.

Consider a bijection $g \colon V_{0} \to V_{0}$ such that $g$~is an
automorphism both of $\im f = (V_{0},F_{0})$ and of $\langle Vf
\rangle = \langle V_{0} \rangle = (V_{0},E_{0})$.  By
Proposition~\ref{prop:Schutzgps}(iv), $fg$~is $\scL$\nbd related
to~$f$.  However, since $g$~is an automorphism of~$\Gamma_{0}$, we
can, by Lemma~\ref{lem:map-extend}(iii), extend it to an
automorphism~$\hat{g}$ of~$R$.  Then we observe that $fg$~and~$f$ are
also $\scR$\nbd related since $fg = f\hat{g}$ and $(fg)\hat{g}^{-1} =
f$.  We can now similarly establish that if $h$~is an element of the
$\scH$\nbd class~$H$ of~$f$, then $h\hat{g}$~is also $\scH$\nbd
related to~$h$.  Hence $\hat{g} \in T_{H}$ (in the notation introduced
in Section~\ref{sec:prelims}).  Now returning to
Proposition~\ref{prop:Schutzgps} we see that
\[
\gamma_{\hat{g}}\phi = \hat{g}|_{\langle V_{0} \rangle} = g.
\]
Hence we conclude that the image of~$\phi$ is $\Aut(V_{0},F_{0}) \cap
\Aut(V_{0},E_{0})$; that is:

\begin{prop}
  \label{prop:R-Schutz}
  Let $f$~be an injective endomorphism of the countable universal
  graph~$R$ of the form specified above and let $H = H_{f}$.  Then
  $\mathcal{S}_{H} \cong \Aut \langle Vf \rangle \cap \Aut(\im
  f)$. \qed
\end{prop}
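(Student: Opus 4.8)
The plan is to upgrade the injective homomorphism $\phi \colon \mathcal{S}_{H} \to \Aut \langle Vf \rangle \cap \Aut(\im f)$ supplied by Proposition~\ref{prop:Schutzgps}(ii) to an isomorphism by showing that it is surjective. So, given an arbitrary $g \in \Aut \langle Vf \rangle \cap \Aut(\im f)$, the goal is to exhibit some $t \in T_{H}$ with $\gamma_{t}\phi = t|_{Vf} = g$; then $g$ lies in the image of~$\phi$ and the statement follows.

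The candidate for $t$ comes from the special shape of~$f$. Writing $\Gamma_{0} = (V_{0},E_{0})$ for the algebraically closed graph around which $R$ was built, and $(V_{0},F_{0}) \cong R$ for the existentially closed subgraph furnished by Proposition~\ref{prop:characterise-ac}(i), we have $\langle Vf \rangle = \langle V_{0}\rangle = (V_{0},E_{0})$ and $\im f = (V_{0},F_{0})$. Hence $g$ is in particular an automorphism of~$\Gamma_{0}$, so Lemma~\ref{lem:map-extend}(iii) extends it to an automorphism~$\hat{g}$ of~$R$; this is exactly the point at which building $R$ around~$\Gamma_{0}$ pays off, since a general automorphism of the induced subgraph~$\langle Vf\rangle$ need not lift to~$\Aut R$. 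Because $f$ maps into~$V_{0}$ and $\hat{g}$ agrees with $g$ there, $f\hat{g} = fg$, and since $f$ is injective and $g$ is an automorphism of~$\im f$, Proposition~\ref{prop:Schutzgps}(iv) gives $fg \scL f$.

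It then remains to verify $\hat{g} \in T_{H}$, that is, $H\hat{g} \subseteq H$; write $M = \End R$. Since $\hat{g}$ is a unit of~$M$, right multiplication by~$\hat{g}$ preserves $\scR$\nbd classes, so $h\hat{g} \scR h$ for every $h \in M$. For the $\scL$\nbd relation, take $h \in H$; from $h \scL f$ write $h = pf$ and $f = qh$, and note $q(h\hat{g}) = f\hat{g} = fg$, whence $Mf = M(fg) \subseteq M(h\hat{g})$ using $fg \scL f$, while $h\hat{g} = p(fg) \in M(fg) = Mf$ gives the reverse inclusion; hence $h\hat{g} \scL h$. Thus $h\hat{g} \scH h$ for all $h \in H$, so $\hat{g} \in T_{H}$, and finally $\gamma_{\hat{g}}\phi = \hat{g}|_{V_{0}} = g$, establishing surjectivity.

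I expect the one genuinely delicate point to be confirming $H\hat{g} \subseteq H$ --- that right translation by~$\hat{g}$ stabilises the whole $\scH$\nbd class rather than merely returning $f$ into~$H$, which is what legitimises regarding $\gamma_{\hat{g}}$ as an element of~$\mathcal{S}_{H}$ in the first place; everything else is bookkeeping with Green's relations and with the facts already recorded in Proposition~\ref{prop:Schutzgps}.
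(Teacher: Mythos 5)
Your proposal is correct and follows essentially the same route as the paper: both arguments show $\phi$ is surjective by taking $g \in \Aut\langle Vf\rangle \cap \Aut(\im f)$, invoking Proposition~\ref{prop:Schutzgps}(iv) for $fg\,\scL\,f$, lifting $g$ to $\hat{g} \in \Aut R$ via Lemma~\ref{lem:map-extend}(iii), and checking $\hat{g} \in T_{H}$ so that $\gamma_{\hat{g}}\phi = g$. The only difference is that you spell out the Green's-relations bookkeeping for $h\hat{g}\,\scH\,h$ with $h$ ranging over all of~$H$, which the paper dismisses with ``similarly''; your computation there is correct.
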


To construct $\scH$\nbd classes with \Schutz\ group isomorphic to a
particular group, we need to specify the particular graph to select
as~$\Gamma_{0}$ in the above argument.  We shall again make use of the
graphs~$L_{S}$, for $S \subseteq \N \setminus \{0,1\}$, defined
earlier.  For such a subset~$S$, define~$M_{S}$ to be the graph whose
vertices are those of~$L_{S}$ together with new vertices~$x_{n}$
(for~$n \in \N$) and whose edges are those of~$L_{S}$ together with
additional edges
\[
\set{(y,x_{n}),(x_{n},y),(x_{m},x_{n}),(x_{n},x_{m})}{y \in V(L_{S}),
  \; m,n \in \N, \; m \neq n}.
\]
Note that the~$x_{n}$ are joined to every other vertex in~$M_{S}$,
while no other vertex has this property.  It follows that any
automorphism of~$M_{S}$ must induce an automorphism of~$L_{S}$ and
permute the vertices~$x_{n}$.  As $\Aut L_{S} = \1$, we conclude $\Aut
M_{S}$~is isomorphic to the symmetric group on a countably infinite
set.  Similarly, using Lemma~\ref{lem:L-props}(iii), if $S$~and~$T$
are subsets of~$\N \setminus \{0,1\}$ then $M_{S} \cong M_{T}$ if and
only if $S = T$.

Now let $\Gamma$~be an arbitrary countable graph and let $S_{n}$, for
$n \in \N$, be a sequence of distinct subsets of~$\N \setminus
\{0,1\}$ such that the graph~$M_{S_{n}}$ is not isomorphic to any connected
component of~$\Gamma$.  We perform the following construction: Define
$\Gamma_{0}^{\ast} = \Gamma^{\dagger}$ (the complement of~$\Gamma$, as
previously).  Then, assuming that $\Gamma_{n}^{\ast}$~has been
defined, enumerate the finite subsets of vertices
of~$\Gamma_{n}^{\ast}$ as~$(A_{i})_{i \in \N}$.  Let the vertices
of~$\Gamma_{n+1}^{\ast}$ be the union of the vertices
of~$\Gamma_{n}^{\ast}$, the vertices of~$L_{S_{n}}$ and new
vertices $\set{x^{(n)}_{i}}{i \in \N}$.  Define the edges
of~$\Gamma_{n+1}^{\ast}$ to be the edges of~$\Gamma_{n}^{\ast}$
together with edges between $a$~and~$x^{(n)}_{i}$ for all $a \in
A_{i}$ and all~$i$.  Having constructed the
graphs~$\Gamma_{n}^{\ast}$, we let $\Gamma^{\ast} =
(V^{\ast},E^{\ast})$~be the limit of this sequence of graphs.  By
construction, $\Gamma^{\ast}$~is existentially closed and therefore
isomorphic to the countable universal graph~$R$.

Now let $\Gamma_{0} = (V^{\ast},E_{0})$ be the graph whose edges are
all possible edges between pairs of vertices except the following are
\emph{not} included:
\begin{enumerate}
\item the edges in~$\Gamma$;
\item for each~$n \in \N$, all edges between distinct vertices of
  $\set{x^{(n)}_{i}}{i \in \N}$;
\item for each~$n \in \N$, the edges in~$L_{S_{n}}$;
\item for each~$n \in \N$, all edges between a vertex in~$L_{S_{n}}$
  and a vertex~$x^{(n)}_{i}$.
\end{enumerate}
Note then that $E^{\ast} \subseteq E_{0}$.  Therefore
$\Gamma_{0}$~is algebraically closed and it is this graph that we use
in the argument employed above to establish
Proposition~\ref{prop:R-Schutz}.  Let $f \colon R \to R$ be the
endomorphism given by an injective map on the set~$V$ of vertices
of~$R$ and whose image is~$(V^{\ast},E^{\ast})$.  Note that $f$~is
necessarily not regular by Proposition~\ref{prop:reg-image}, since
$\im f = (V^{\ast},E^{\ast}) \neq \langle Vf \rangle =
(V^{\ast},E_{0})$.  Then the \Schutz\ group of the $\scH$\nbd class
of~$f$ is as specified by Proposition~\ref{prop:R-Schutz}, namely
$\mathcal{S}_{H_{f}} \cong \Aut (V^{\ast},E_{0}) \cap \Aut
(V^{\ast},E^{\ast})$.

To apply this result, we first determine the automorphism group
of~$\Gamma_{0}$.  Note that $\Gamma_{0}^{\dagger}$~is the disjoint
union of the graphs~$\Gamma$ and~$M_{S_{n}}$ for $n \in \N$.  Hence
\[
\Aut \Gamma_{0} \cong \Aut \Gamma_{0}^{\dagger} \cong \Aut \Gamma
\times \prod_{n \in \N} \Aut M_{S_{n}} \cong \Aut \Gamma \times (\Sym
\N)^{\aleph_{0}}
\]
by our earlier observations.  It follows that if $g$~is a bijection $V
\to V$ that is simultaneously an automorphism of
both~$(V^{\ast},E_{0})$ and~$(V^{\ast},E^{\ast})$, then $g$~induces an
automorphism of~$\Gamma$, fixes all vertices of~$L_{S_{n}}$ (for all
$n \in \N$) and, for each~$n \in \N$, permutes the vertices in
$\set{x^{(n)}_{i}}{i \in \N}$.  However, given an automorphism
of~$\Gamma$, there is precisely one choice for these permutations that
defines an automorphism of~$(V^{\ast},E^{\ast})$ since, at each
stage~$n$, the vertex~$x^{(n)}_{i}$ must be mapped to the vertex
adjoined to the finite set~$A_{i}g$.  We conclude that mapping~$g$ to
its restriction to the vertices of~$\Gamma$ yields an isomorphism from
$\Aut(V^{\ast},E_{0}) \cap \Aut(V^{\ast},E^{\ast})$ to~$\Aut \Gamma$.

Finally, note that this method also constructs for us
$2^{\aleph_{0}}$~many $\scD$\nbd classes where the \Schutz\ group is
isomorphic to the automorphism group of~$\Gamma$.  First fix the
subsets~$S_{n}$ for~$n \geq 2$ as above.  There remain
$2^{\aleph_{0}}$~possible choices now for~$S_{1}$ in order to follow
the above construction.  Each such~$S_{1}$ determines an (injective)
non-regular endomorphism $f = f_{S_{1}}$ of~$R$ with
$\mathcal{S}_{H_{f}} \cong \Aut \Gamma$.  Moreover, since $\langle Vf
\rangle^{\dagger} = \Gamma_{0}^{\dagger}$ is the disjoint union
of~$\Gamma$ and the~$M_{S_{n}}$, when $S_{1} \neq S'_{1}$ there can
exist no isomorphism from~$\langle Vf_{S_{1}} \rangle$ to~$\langle
Vf_{S'_{1}} \rangle$ since $M_{S_{1}} \not\cong M_{S'_{1}}$.  Hence
$f_{S_{1}}$~and~$f_{S'_{1}}$ belong to distinct $\scD$\nbd classes if
$S_{1} \neq S'_{1}$ by Lemma~\ref{lem:class-facts}(iii).

We have now established our final result about the endomorphism monoid
of~$R$:

\begin{thm}
  \label{thm:R-uncountSchutz}
  Let $\Gamma$~be any countable graph.  There are $2^{\aleph_{0}}$
  non-regular $\scD$\nbd classes of the countable universal graph~$R$
  such that the \Schutz\ groups of $\scH$\nbd classes within them are
  isomorphic to~$\Aut \Gamma$.\qed
\end{thm}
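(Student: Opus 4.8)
The plan is to reuse essentially all the machinery assembled above and to package it as follows: given the countable graph~$\Gamma$, first exhibit a single injective non\nbd regular endomorphism~$f$ of~$R$ whose $\scH$\nbd class has \Schutz\ group isomorphic to~$\Aut\Gamma$, and then vary one parameter of the construction to obtain $2^{\aleph_{0}}$ such endomorphisms lying in pairwise distinct $\scD$\nbd classes. The building blocks are the rigid graphs~$L_{S}$ (for $S \subseteq \N\setminus\{0,1\}$), which satisfy $\Aut L_{S} = \1$ and $L_{S} \cong L_{T} \iff S = T$ by Lemma~\ref{lem:L-props}, and the graphs~$M_{S}$ obtained from~$L_{S}$ by adjoining a countably infinite set of vertices each joined to every other vertex. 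Since these new vertices are precisely the vertices of full degree in~$M_{S}$, any automorphism of~$M_{S}$ restricts to an automorphism of~$L_{S}$ and permutes the new vertices freely; hence $\Aut M_{S} \cong \Sym\N$ and, using Lemma~\ref{lem:L-props}(iii), $M_{S} \cong M_{T} \iff S = T$.

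First I would choose countably many pairwise distinct subsets~$S_{n}$ of~$\N\setminus\{0,1\}$ such that no~$M_{S_{n}}$ is isomorphic to a connected component of~$\Gamma$ (possible because $\Gamma$~has only countably many components while there are $2^{\aleph_{0}}$~admissible~$S$). Starting from~$\Gamma^{\dagger}$, I run the standard "adjoin a vertex for every finite subset" construction of~$R$, but at stage~$n$ I also throw in a disjoint copy of~$L_{S_{n}}$ together with a countably infinite independent set $\set{x_{i}^{(n)}}{i\in\N}$, with $x_{i}^{(n)}$ joined to the $i$th enumerated finite subset of the current graph. The limit $\Gamma^{\ast} = (V^{\ast},E^{\ast})$ is existentially closed, hence a copy of~$R$. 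Let $\Gamma_{0} = (V^{\ast},E_{0})$ be the graph containing all edges except those of~$\Gamma$, those of each~$L_{S_{n}}$, the edges among each block $\set{x_{i}^{(n)}}{i\in\N}$, and the edges between~$L_{S_{n}}$ and that block; equivalently $\Gamma_{0}^{\dagger}$ is the disjoint union of~$\Gamma$ and the~$M_{S_{n}}$. One checks $E^{\ast} \subseteq E_{0}$, so $\Gamma_{0}$~is algebraically closed and, by Proposition~\ref{prop:characterise-ac}(i), $(V^{\ast},E^{\ast}) \cong R$. Taking $f\colon R\to R$ to be the injective endomorphism realising this isomorphism, $f$~is non\nbd regular by Proposition~\ref{prop:reg-image} because $\im f = (V^{\ast},E^{\ast}) \neq \langle Vf\rangle = (V^{\ast},E_{0})$.

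Next, Proposition~\ref{prop:R-Schutz} identifies $\mathcal{S}_{H_{f}} \cong \Aut(V^{\ast},E_{0}) \cap \Aut(V^{\ast},E^{\ast})$, so the task becomes computing this intersection. By Lemma~\ref{lem:complement}(i) and the observations about~$M_{S}$, we have $\Aut\Gamma_{0} \cong \Aut\Gamma_{0}^{\dagger} \cong \Aut\Gamma \times (\Sym\N)^{\aleph_{0}}$; hence any common automorphism of $(V^{\ast},E_{0})$ and $(V^{\ast},E^{\ast})$ induces an automorphism of~$\Gamma$, fixes every vertex of each~$L_{S_{n}}$ (by rigidity, Lemma~\ref{lem:L-props}(i)), and permutes each block $\set{x_{i}^{(n)}}{i\in\N}$. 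Conversely, a given automorphism of~$\Gamma$ forces a unique such family of permutations: at each stage, $x_{i}^{(n)}$ must go to the vertex attached to the image of the $i$th finite subset, for the map to respect~$E^{\ast}$. Thus restriction to the vertices of~$\Gamma$ yields an isomorphism $\Aut(V^{\ast},E_{0}) \cap \Aut(V^{\ast},E^{\ast}) \xrightarrow{\ \sim\ } \Aut\Gamma$, so $\mathcal{S}_{H_{f}} \cong \Aut\Gamma$. Finally, to get $2^{\aleph_{0}}$~many $\scD$\nbd classes I freeze the~$S_{n}$ for $n\geq 2$ and let~$S_{1}$ range over the $2^{\aleph_{0}}$ admissible subsets; each gives a non\nbd regular $f_{S_{1}}$ with $\mathcal{S}_{H_{f_{S_{1}}}} \cong \Aut\Gamma$ and with $\langle Vf_{S_{1}}\rangle^{\dagger}$ the disjoint union of~$\Gamma$ and the~$M_{S_{n}}$ (including $M_{S_{1}}$). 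If $S_{1} \neq S_{1}'$ then $M_{S_{1}} \not\cong M_{S_{1}'}$, so $\langle Vf_{S_{1}}\rangle \not\cong \langle Vf_{S_{1}'}\rangle$, whence $f_{S_{1}}$ and $f_{S_{1}'}$ lie in distinct $\scD$\nbd classes by Lemma~\ref{lem:class-facts}(iii).

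The hard part will be the automorphism-group computation in the third paragraph: one must verify carefully that the intersection collapses \emph{exactly} to~$\Aut\Gamma$, i.e.\ that choosing an automorphism of~$\Gamma$ leaves no residual freedom among the vertices~$x_{i}^{(n)}$ and that no extra automorphisms are hidden in the~$L_{S_{n}}$ layers. This is precisely where rigidity of the~$L_{S}$ and the bookkeeping of which vertex was attached to which finite subset during the construction of~$\Gamma^{\ast}$ do the essential work; everything else is an assembly of results already established.
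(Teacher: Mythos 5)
Your proposal is correct and follows essentially the same route as the paper: the same auxiliary graphs $M_{S}$, the same staged construction of $\Gamma^{\ast}=(V^{\ast},E^{\ast})$ inside the algebraically closed graph $\Gamma_{0}=(V^{\ast},E_{0})$ with $\Gamma_{0}^{\dagger}$ the disjoint union of $\Gamma$ and the $M_{S_{n}}$, the same identification of the \Schutz\ group via Proposition~\ref{prop:R-Schutz} and the computation $\Aut(V^{\ast},E_{0})\cap\Aut(V^{\ast},E^{\ast})\cong\Aut\Gamma$, and the same device of varying $S_{1}$ to separate $\scD$\nbd classes via Lemma~\ref{lem:class-facts}(iii). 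The one step you flag as delicate, the rigidity bookkeeping forcing each $x_{i}^{(n)}$ to the vertex attached to $A_{i}g$, is handled in the paper exactly as you describe.
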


\section{Directed graphs}
\label{sec:directed}

A \emph{directed graph} is a relational structure $\Gamma = (V,E)$
where $E$~is an irreflexive binary relation on~$V$.  This ensures that
every graph is, in particular, a directed graph.  We can therefore use
all the graphs constructed in Section~\ref{sec:graph} but now viewed
as directed graphs.  Consequently, our methods in this section are
almost identical to those for undirected graphs and we therefore omit
most of the details.  Furthermore, the class of groups arising as the
automorphism group of a graph are amongst those that arise as the
automorphism group of a directed graph.  Our first step is to show
that these two classes are actually the same.

Let $\Gamma = (V,E)$ be a countable directed graph.  Enumerate the
vertices of~$\Gamma$ as $V = \set{v_{i}}{i \in I}$ (where $I \subseteq
\N$).  We construct an (undirected) graph $\Gamma^{\dashv} = (V',E')$ as
follows.  Set
\[
V' = V \cup \set{ x_{jk}, y_{jk}, z_{jk} }{ (v_{j},v_{k}) \in E }
\]
and define the (undirected) edges in~$\Gamma^{\dashv}$ to be all
$(v_{j},x_{jk})$, $(x_{jk},y_{jk})$, $(y_{jk},z_{jk})$ and
$(y_{jk},v_{k})$ for all $(v_{j},v_{k}) \in E$.  This has the effect
of replacing the ``arrow'' from~$v_{j}$ to~$v_{k}$ in~$\Gamma$ by the
shape shown in Figure~\ref{fig:dashv}.

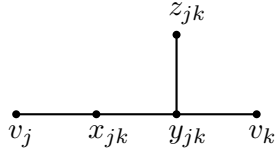
\begin{figure}
  \begin{center}
    \begin{pspicture}(0,.3)(3,1.2)
      \multiput(0,0)(1,0){4}{\qdisk(0,0){.05}}
      \qdisk(2,1){.05}
      \psline(0,0)(3,0)
      \psline(2,0)(2,1)
      \put(-.1,-.3){$v_{j}$}
      \put(.9,-.3){$x_{jk}$}
      \put(1.9,-.3){$y_{jk}$}
      \put(2.9,-.3){$v_{k}$}
      \put(1.9,1.25){$z_{jk}$}
    \end{pspicture}
  \end{center}
  \caption{Replacement edges in~$\Gamma^{\dashv}$}
  \label{fig:dashv}
\end{figure}

For our directed graph~$\Gamma = (V,E)$, let us also define, for $v
\in V$,
\[
\Gamma_{+}(v) = \set{x \in V}{(v,x) \in E}
\qquad \text{and} \qquad
\Gamma_{-}(v) = \set{x \in V}{(x,v) \in E}.
\]

\begin{lemma}
  \label{lem:dashv}
  Let $\Gamma = (V,E)$ be a directed graph.  Suppose that $\order{
    \Gamma_{+}(v) } + \order{ \Gamma_{-}(v) } > 3$ for all $v \in V$.
  Then $\Aut \Gamma^{\dashv} \cong \Aut \Gamma$.
\end{lemma}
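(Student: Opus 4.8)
The plan is to show that the construction $\Gamma \mapsto \Gamma^{\dashv}$ is ``automorphism‑faithful'' under the stated degree hypothesis. One direction is easy: any automorphism $\alpha$ of $\Gamma$ extends to an automorphism of $\Gamma^{\dashv}$ by sending $x_{jk} \mapsto x_{\alpha(j)\alpha(k)}$, and similarly for the $y$'s and $z$'s, where here I abbreviate the index of $v_{\alpha(j)}$ by $\alpha(j)$. This gives an injective homomorphism $\Aut \Gamma \to \Aut \Gamma^{\dashv}$. The substance is the reverse map: I must show every automorphism of $\Gamma^{\dashv}$ preserves the set $V$ of ``original'' vertices and permutes the gadget‑vertices in a way dictated by its action on $V$, so that restriction to $V$ gives a well‑defined automorphism of $\Gamma$ and the two maps are mutually inverse.

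**Identifying $V$ inside $\Gamma^{\dashv}$ combinatorially.**
First I would record the local structure: in $\Gamma^{\dashv}$ the vertex $z_{jk}$ has degree $1$, its unique neighbour $y_{jk}$ has degree $3$ (neighbours $x_{jk}$, $z_{jk}$, $v_k$), the vertex $x_{jk}$ has degree $2$ (neighbours $v_j$ and $y_{jk}$), and each $v \in V$ has degree equal to $\order{\Gamma_+(v)} + \order{\Gamma_-(v)}$, since $v = v_j$ is adjacent to one $x_{jk}$ for each out‑arrow and to one $y_{kj}$ for each in‑arrow. The hypothesis $\order{\Gamma_+(v)} + \order{\Gamma_-(v)} > 3$ is exactly what is needed to guarantee $\deg v \geq 4$ for all $v \in V$, so that the original vertices are distinguishable by degree from all three types of gadget‑vertex (which have degrees $1$, $2$, $3$). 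Thus any automorphism $\beta$ of $\Gamma^{\dashv}$ satisfies $\beta(V) = V$, and also permutes the degree‑$1$ vertices $\{z_{jk}\}$, the degree‑$2$ vertices $\{x_{jk}\}$, and the degree‑$3$ vertices $\{y_{jk}\}$ among themselves.

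**Reconstructing the arrows and checking mutual inverseness.**
Next, I would argue that $\beta$ respects each gadget as a block. Since $\beta$ permutes the degree‑$3$ vertices, say $\beta(y_{jk}) = y_{j'k'}$; then $\beta$ must send the unique degree‑$1$ neighbour of $y_{jk}$, namely $z_{jk}$, to the unique degree‑$1$ neighbour $z_{j'k'}$ of $y_{j'k'}$, and must send the two remaining neighbours $\{x_{jk}, v_k\}$ of $y_{jk}$ to $\{x_{j'k'}, v_{k'}\}$; as $x_{jk}$ has degree $2$ and $v_k$ has degree $\geq 4$, necessarily $\beta(x_{jk}) = x_{j'k'}$ and $\beta(v_k) = v_{k'}$. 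Finally the other neighbour of $x_{jk}$, namely $v_j$, must map to the other neighbour $v_{j'}$ of $x_{j'k'}$, so $\beta(v_j) = v_{j'}$. Hence whenever $(v_j, v_k) \in E$ the pair $(v_{j'}, v_{k'}) = (\beta(v_j), \beta(v_k))$ is again in $E$; running the argument with $\beta^{-1}$ shows the restriction $\beta|_V$ is an automorphism of $\Gamma$, and that $\beta$ is completely determined by $\beta|_V$ — indeed it must send each gadget on $(v_j,v_k)$ to the gadget on $(\beta(v_j),\beta(v_k))$. This shows the restriction map $\Aut \Gamma^{\dashv} \to \Aut \Gamma$ is a homomorphism inverse to the extension map above, giving $\Aut \Gamma^{\dashv} \cong \Aut \Gamma$.

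**Main obstacle.**
The only delicate point is the degree bookkeeping: one must be careful that no $v \in V$ accidentally acquires a degree equal to $1$, $2$ or $3$, and that the degrees $1,2,3$ of the gadget‑vertices are genuinely distinct from one another (they are, by inspection), so that $\beta$ cannot mix the four types. This is precisely where the hypothesis $\order{\Gamma_+(v)} + \order{\Gamma_-(v)} > 3$ enters and cannot be dropped; everything else is a routine local‑structure verification.
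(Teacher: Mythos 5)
Your proposal is correct and follows essentially the same route as the paper: both identify the four vertex classes $V$, $X$, $Y$, $Z$ by their degrees (which is exactly where the hypothesis $\order{\Gamma_{+}(v)}+\order{\Gamma_{-}(v)}>3$ is used), deduce that any automorphism of $\Gamma^{\dashv}$ preserves each class setwise, recover the directed edges of $\Gamma$ from the gadget paths, and conclude that restriction to $V$ and the gadget-wise extension are mutually inverse isomorphisms between $\Aut\Gamma^{\dashv}$ and $\Aut\Gamma$. Your block-by-block reconstruction of each gadget is a slightly more explicit version of the paper's observation that the images of $x_{jk}$, $y_{jk}$, $z_{jk}$ are determined by $v_{j}f$ and $v_{k}f$, but the argument is the same.
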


\begin{proof}
  We observe that, by construction, in~$\Gamma^{\dashv}$, every vertex
  in $X = \set{x_{jk}}{(v_{j},v_{k}) \in E}$ has degree~$2$, every
  vertex in $Y = \set{y_{jk}}{(v_{j},v_{k}) \in E}$ has degree~$3$,
  every vertex in $Z = \set{z_{jk}}{(v_{j},v_{k}) \in E}$ has
  degree~$1$ and (by assumption) every~$v_{i}$ has degree greater
  than~$3$.  Consequently, if $f \in \Aut \Gamma^{\dashv}$ then $Xf =
  X$, $Yf = Y$, $Zf = Z$ and $Vf = V$.

  Let us define a map $\phi \colon \Aut \Gamma^{\dashv} \to \Aut
  \Gamma$ by $f\phi = f|_{V}$ for all $f \in \Aut \Gamma^{\dashv}$.
  For such an automorphism~$f$, from the above observation,
  $f\phi$~defines a bijection $V \to V$.  If $(u,v)$~is a (directed)
  edge in~$\Gamma$, then there exists $x \in X$ and $y \in Y$ such
  that $(u,x)$,~$(x,y)$ and~$(y,v)$ are (undirected) edges
  in~$\Gamma^{\dashv}$.  Then $(uf,xf)$,~$(xf,yf)$ and~$(yf,vf)$ are
  edges in~$\Gamma^{\dashv}$ and necessarily $xf \in X$ and $yf \in
  Y$.  It follows that $(uf,vf)$~must be an edge in~$\Gamma$.
  Similarly, if $(u,v) \notin E$, then $(uf,vf) \notin E$.  Hence
  $f\phi$~is indeed a graph automorphism of~$\Gamma$.

  It is straightforward to see that $\phi$~is a homomorphism and,
  since the images of $x_{jk}$,~$y_{jk}$ and~$z_{jk}$ (for
  $(v_{j},v_{k}) \in E$) under~$f$ are completely determined by
  $v_{j}f$~and~$v_{k}f$, it follows that $\phi$~is injective.
  Finally, if $h \in \Aut \Gamma$, define $v_{i}f = v_{i}h$ for
  all~$i$ and if $(v_{j},v_{k}) \in E$ with $v_{j}h = v_{\ell}$ and
  $v_{k}h = v_{m}$ define $x_{jk}f = x_{\ell m}$, \ $y_{jk}f = y_{\ell
    m}$ and $z_{jk}f = z_{\ell m}$.  This defines $f \in \Aut
  \Gamma^{\dashv}$ with the property that $f\phi = f|_{V} = h$.
  Consequently, $\phi$~is an isomorphism as required.
\end{proof}

In addition to the above lemma, the other tools we require are the
constructions used in Section~\ref{sec:graph}.  If $S$~is a subset
of~$\N \setminus \{0,1\}$, we define the graph~$L_{S}$ as earlier, but
we now view it as a directed graph.  So, for any pair of vertices
$u$~and~$v$ joined in~$L_{S}$, there is both an edge from~$u$ to~$v$
and from~$v$ to~$u$ (see Figure~\ref{fig:L-dirgraph}).  The disjoint
union of two directed graphs and the complement~$\Gamma^{\dagger}$ of
a directed graph~$\Gamma$ are defined exactly as earlier.  In
particular, there is an edge~$(u,v)$ from~$u$ to~$v$
in~$\Gamma^{\dagger}$ if and only if there is no edge from~$u$ to~$v$
in~$\Gamma$.  Using these we now observe that the classes of groups
arising as the automorphism group of a directed graph and as the
automorphism group of an undirected graph coincide.

\begin{figure}
  \begin{center}
    \begin{pspicture}(0,.5)(7,1)
      \psset{arrowscale=1.8, linewidth=.6pt}
      \multips(0,0)(1,0){7}{\qdisk(0,0){.05}}
      \qdisk(2,1){.05}
      \qdisk(4,1){.05}
      \qdisk(5,1){.05}
      \put(6.3,0){\dots}
      \pscurve{->}(0,0)(.5,.2)(1,0)
      \pscurve{->}(1,0)(.5,-.2)(0,0)
      \pscurve{->}(1,0)(1.5,.2)(2,0)
      \pscurve{->}(2,0)(1.5,-.2)(1,0)
      \pscurve{->}(2,0)(2.5,.2)(3,0)
      \pscurve{->}(3,0)(2.5,-.2)(2,0)
      \pscurve{->}(3,0)(3.5,.2)(4,0)
      \pscurve{->}(4,0)(3.5,-.2)(3,0)
      \pscurve{->}(4,0)(4.5,.2)(5,0)
      \pscurve{->}(5,0)(4.5,-.2)(4,0)
      \pscurve{->}(5,0)(5.5,.2)(6,0)
      \pscurve{->}(6,0)(5.5,-.2)(5,0)
      \pscurve{->}(2,0)(1.8,.5)(2,1)
      \pscurve{->}(2,1)(2.2,.5)(2,0)
      \pscurve{->}(4,0)(3.8,.5)(4,1)
      \pscurve{->}(4,1)(4.2,.5)(4,0)
      \pscurve{->}(5,0)(4.8,.5)(5,1)
      \pscurve{->}(5,1)(5.2,.5)(5,0)
      \pscurve{-}(6,0)(6.15,.09)(6.3,.15)
      \pscurve{<-}(6,0)(6.15,-.09)(6.3,-.15)
    \end{pspicture}
  \end{center}
  \caption{$L_{\{2,4,5,\dots\}}$~as a directed graph}
  \label{fig:L-dirgraph}
\end{figure}
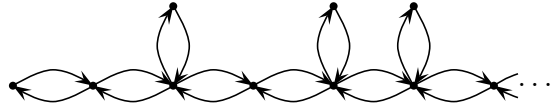

\begin{prop}
  \label{prop:directed-automs}
  Let\/ $\Gamma$~be a countable directed graph.  Then there exists an
  (undirected) countable graph~$\Lambda$ such that $\Aut \Lambda \cong
  \Aut \Gamma$.
\end{prop}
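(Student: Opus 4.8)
The plan is to reduce the statement to Lemma~\ref{lem:dashv} by first replacing $\Gamma$ with a directed graph $\Gamma'$ that has the same automorphism group and, in addition, satisfies the degree hypothesis $\order{\Gamma'_{+}(v)} + \order{\Gamma'_{-}(v)} > 3$ for every vertex~$v$. Once such a~$\Gamma'$ is in hand, I would simply set $\Lambda = (\Gamma')^{\dashv}$: this is a countable undirected graph and Lemma~\ref{lem:dashv} gives $\Aut \Lambda \cong \Aut \Gamma' \cong \Aut \Gamma$, as required. So all the work goes into manufacturing $\Gamma'$.

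To build $\Gamma'$, fix a set $S \subseteq \N \setminus \{0,1\}$ (any one will do) and, for each vertex $v$ of $\Gamma = (V,E)$, take a fresh copy $\Lambda_{v}$ of $L_{S}$, viewed as a directed graph as described just before the proposition. Let $\Gamma'$ be the directed graph with vertex set $V \disjunion \bigcup_{v \in V} V(\Lambda_{v})$ whose arcs are: all arcs of~$\Gamma$ among the vertices of~$V$; all arcs of each~$\Lambda_{v}$; and, for every $v \in V$, a pair of arcs $v \to u$, $u \to v$ for each vertex~$u$ of~$\Lambda_{v}$ (so $v$~becomes a ``universal'' vertex of its copy~$\Lambda_{v}$). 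Then $\Gamma'$~is countable, and it satisfies the degree hypothesis: each $v \in V$ is joined to every vertex of the infinite graph~$\Lambda_{v}$, so it has infinite in/out-degree; and a vertex~$u$ of some~$\Lambda_{v}$ has degree $d \geq 1$ in the connected graph~$L_{S}$, contributing $2d$ arcs, together with the two arcs joining it to~$v$, giving in/out-degree $2d + 2 \geq 4$.

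The heart of the argument is that $\Aut \Gamma' \cong \Aut \Gamma$. The vertices of~$V$ are exactly the vertices of~$\Gamma'$ of infinite degree (each vertex of a~$\Lambda_{v}$ is adjacent only to its finitely many $L_{S}$\nbd neighbours and to~$v$), so $V$~is invariant under every $\tau \in \Aut \Gamma'$; moreover the subgraph of~$\Gamma'$ induced on the complement of~$V$ is the disjoint union of the~$\Lambda_{v}$, and these are precisely its connected components. Hence $\tau$~permutes the~$\Lambda_{v}$, inducing a permutation~$\sigma$ of~$V$ with $\tau(v) = \sigma(v)$ and $\tau(\Lambda_{v}) = \Lambda_{\sigma(v)}$; since $L_{S}$~is rigid by Lemma~\ref{lem:L-props}(i) (the directed graph built from an undirected one in the stated way has the same automorphism group), $\tau$~restricts on each~$\Lambda_{v}$ to the unique isomorphism onto~$\Lambda_{\sigma(v)}$. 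As the arcs of~$\Gamma'$ among vertices of~$V$ are precisely those of~$\Gamma$, this forces $v \to w$ in~$\Gamma$ to be equivalent to $\sigma(v) \to \sigma(w)$, i.e.\ $\sigma \in \Aut \Gamma$; conversely each $\sigma \in \Aut \Gamma$ extends to an automorphism of~$\Gamma'$ by sending each~$\Lambda_{v}$ to~$\Lambda_{\sigma(v)}$ canonically. These are mutually inverse, so $\tau \mapsto \tau|_{V}$ is an isomorphism $\Aut \Gamma' \to \Aut \Gamma$.

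The one genuinely delicate point — and the place where the hypotheses of Lemma~\ref{lem:dashv} really bite — is this passage from~$\Gamma$ to~$\Gamma'$: the operation $\Gamma \mapsto \Gamma^{\dashv}$ introduces vertices of degrees $1$,~$2$ and~$3$, so if any original vertex of~$\Gamma$ has in/out-degree at most~$3$ one must modify it first, while being careful not to create any new automorphisms. The device above handles both requirements simultaneously — turning each original vertex into the universal vertex of a rigid locally finite gadget makes every degree exceed~$3$ and, at the same time, pins down~$V$ as the set of infinite-degree vertices — and in particular it copes gracefully with the awkward case in which $\Gamma$~has isolated or very-low-degree vertices. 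After that, everything is routine and one just invokes Lemma~\ref{lem:dashv}.
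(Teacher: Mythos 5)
Your argument is correct, and it shares the paper's overall skeleton---manufacture a countable directed graph with automorphism group $\Aut \Gamma$ that satisfies the degree hypothesis of Lemma~\ref{lem:dashv}, then pass to the undirected graph $\Lambda = (\,\cdot\,)^{\dashv}$---but the intermediate construction is genuinely different. The paper takes $\Delta_{S} = (\Gamma \disjunion L_{S})^{\dagger}$ for a \emph{single} copy of~$L_{S}$, with $S$ chosen so that $L_{S}$ is isomorphic to no component of~$\Gamma$: the complement makes every in/out-degree infinite at a stroke, and the automorphism group is controlled by Lemma~\ref{lem:complement}(i) together with $\Aut L_{S} = \1$, so the whole verification takes three lines. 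You instead cone a fresh rigid copy of~$L_{S}$ over each vertex of~$\Gamma$. This costs you a longer rigidity verification (identifying $V$ as the set of infinite-degree vertices, the gadgets as the finite-degree connected components, and checking that $\tau(\Lambda_{v}) = \Lambda_{\tau(v)}$ because $v$ is the unique $V$\nbd vertex adjacent to all of~$\Lambda_{v}$), all of which you carry out correctly, including the computation $2d+2 \geq 4 > 3$ for the gadget vertices. In exchange you need no hypothesis whatever on~$S$---the original vertices are distinguished by degree rather than by non-isomorphism of components, so the awkward case where $\Gamma$ happens to contain a copy of~$L_{S}$ never arises---and you avoid the complement operation entirely. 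Both routes are sound; the paper's is shorter mainly because it reuses Lemmas \ref{lem:complement} and~\ref{lem:L-props} exactly as they were already deployed in the proof of Theorem~\ref{thm:ac-graphs}.
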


\begin{proof}
  As $\Gamma$~is countable, we can choose~$S$ such that the directed
  graph~$L_{S}$ is not isomorphic to any component of~$\Gamma$.  Take
  $\Delta_{S}$~to be the directed graph $(\Gamma \disjunion
  L_{S})^{\dagger}$ and $\Lambda$~to be the undirected
  graph~$\Delta_{S}^{\dashv}$ as constructed above.  By construction,
  each vertex in~$\Delta_{S}$ has infinite degree and so, by
  Lemmas~\ref{lem:dashv}, \ref{lem:complement}(i)
  and~\ref{lem:L-props}(i), $\Aut \Lambda \cong \Aut \Delta_{S} \cong
  \Aut (\Gamma \disjunion L_{S}) \cong \Aut \Gamma$, as required.
\end{proof}

For the class of directed graphs, the condition to be algebraically
closed is easily seen to be equivalent to the following: a directed
graph $\Gamma = (V,E)$ is \emph{algebraically closed} if for any
finite subset~$A$ of its vertices, there exists some vertex~$v$ such
that $(u,v),(v,u) \in E$ for all $u \in A$.  If we start with a
directed graph~$\Gamma$ and perform the same constructions as in
Section~\ref{sec:graph}, then we observe that $\Delta_{S} = (\Gamma
\disjunion L_{S})^{\dagger}$ is algebraically closed (as a directed
graph) and, provided $L_{S}$~is not isomorphic to any (weakly)
connected component of~$\Gamma$, that $\Aut \Delta_{S} \cong \Aut
\Gamma$.  This establishes the analogue of Theorem~\ref{thm:ac-graphs}
for the class of directed graphs:

\begin{thm}
  \label{thm:ac-directed}
  Let\/ $\Gamma$~be a countable (undirected) graph.  Then there exist
  $2^{\aleph_{0}}$~pairwise non-isomorphic countable algebraically closed
  directed graphs whose automorphism group is isomorphic to that
  of\/~$\Gamma$. \qed
\end{thm}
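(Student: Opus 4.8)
The plan is to mimic directly the proof of Theorem~\ref{thm:ac-graphs}, since directed graphs subsume undirected graphs and all the required constructions have already been set up in this section. Fix a countable undirected graph~$\Gamma$. View~$\Gamma$ as a directed graph (replacing each undirected edge by the pair of opposite arrows), and for each subset $S \subseteq \N \setminus \{0,1\}$ form the directed graph $\Delta_{S} = (\Gamma \disjunion L_{S})^{\dagger}$, where $L_{S}$ is now regarded as a directed graph and the disjoint union and complement are taken in the directed-graph sense. By the discussion immediately preceding the statement, $\Delta_{S}$ is algebraically closed as a directed graph, and provided $L_{S}$ is not isomorphic to any (weakly) connected component of~$\Gamma$ one has $\Aut \Delta_{S} \cong \Aut(\Gamma \disjunion L_{S}) \cong \Aut \Gamma \times \Aut L_{S} \cong \Aut \Gamma$, using the directed analogues of Lemmas~\ref{lem:complement}(i) and~\ref{lem:L-props}(i).

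First I would note that $\Gamma$ has at most countably many weakly connected components, so by the directed analogue of Lemma~\ref{lem:L-props}(iii) (isomorphism of the $L_{S}$ as directed graphs is still equivalent to equality of~$S$, since the directed $L_{S}$ has the same underlying undirected graph and its automorphisms are the same) there remain $2^{\aleph_{0}}$ choices of~$S$ with $L_{S}$ isomorphic to no component of~$\Gamma$. Each such~$S$ yields a countable algebraically closed directed graph $\Delta_{S}$ with $\Aut \Delta_{S} \cong \Aut \Gamma$. Then I would check pairwise non-isomorphism: if $S \neq T$ are two such subsets, then $L_{S} \not\cong L_{T}$, hence $\Gamma \disjunion L_{S} \not\cong \Gamma \disjunion L_{T}$ as directed graphs (a weakly connected component isomorphic to $L_{T}$ appears on one side but not the other), and therefore $\Delta_{S} \not\cong \Delta_{T}$ by the directed analogue of Lemma~\ref{lem:complement}(ii). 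This gives the required $2^{\aleph_{0}}$ pairwise non-isomorphic examples.

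Since the theorem is already stated with \qed in the excerpt, the intended write-up is presumably just a remark that the argument is verbatim that of Theorem~\ref{thm:ac-graphs}, so I would keep it to a sentence or two. The only genuine point requiring a moment's care — and hence the main (mild) obstacle — is confirming that the three ingredients Lemma~\ref{lem:complement}, Lemma~\ref{lem:ac-construct}, and Lemma~\ref{lem:L-props} all transfer to directed graphs: the complement and disjoint union of directed graphs are defined exactly as before, $\Aut L_{S}$ and the isomorphism type of~$L_{S}$ are unchanged by the passage to the directed version (same underlying graph, same degree/valency argument for rigidity), and the algebraic-closure argument of Lemma~\ref{lem:ac-construct} works identically once ``adjacent'' is read as ``joined by arrows in both directions'', which matches the directed notion of algebraically closed recalled just above. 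None of this needs new ideas, so the proof reduces to the observation already made before the statement together with the counting argument above.
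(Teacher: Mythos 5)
Your proposal is correct and follows exactly the route the paper takes: the paper's ``proof'' of Theorem~\ref{thm:ac-directed} is precisely the paragraph preceding its statement, which views $\Gamma$ and $L_{S}$ as directed graphs, forms $\Delta_{S}=(\Gamma\disjunion L_{S})^{\dagger}$ in the directed sense, and imports the counting and non-isomorphism argument verbatim from Theorem~\ref{thm:ac-graphs}. The points you single out for care (the directed notion of algebraic closure requiring arrows in both directions, and the transfer of Lemmas~\ref{lem:complement}, \ref{lem:ac-construct} and~\ref{lem:L-props}) are exactly the ones the paper relies on implicitly, so nothing is missing.
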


As in the previous section, we shall make use of a standard method to
construct a copy of the countable universal directed graph.  If
$\Gamma = (V,E)$~is any countable directed graph, first construct a
new directed graph~$\mathcal{H}(\Gamma)$ as follows.  Enumerate the
set of all triples of finite and pairwise disjoint subsets of~$\Gamma$
as $(A_{i},B_{i},C_{i})_{i \in I}$ for $I \subseteq \N$.  Define
$\mathcal{H}(\Gamma)$~to be the directed graph with vertex set $V \cup
\set{v_{i}}{i \in I}$ (where each~$v_{i}$ is a new vertex) and edge
set
\[
E \cup \set{(v_{i},a),(b,v_{i}),(v_{i},c),(c,v_{i})}{a \in A_{i}, b
  \in B_{i}, c \in C_{i}, i \in I}.
\]
Thus each new vertex~$v_{i}$~has the property that there is an edge
from~$v_{i}$ to every vertex in~$A_{i}$, from every vertex in~$B_{i}$
to~$v_{i}$, and from~$v_{i}$ to every vertex in~$C_{i}$ and \emph{vice
  versa}.

Now construct a sequence of directed graphs~$\Gamma_{n}$ by defining
$\Gamma_{0} = \Gamma$ and $\Gamma_{n+1} = \mathcal{H}(\Gamma_{n})$ for
each~$n \geq 0$.  We define $\Gamma_{\infty}$~to be the limit of this
sequence of graphs, which is countable and, by construction,
existentially closed.  Therefore $\Gamma_{\infty}$~is isomorphic to
the countable universal directed graph~$D$.

The same arguments apply to constructing extensions of homomorphisms
and an analogue of Lemma~\ref{lem:map-extend} transfers straight
across to the setting of directed graphs.  We then establish, by
identical methods as used in Section~\ref{sec:graph}, first the
characterization of graphs that arise as images of idempotent
endomorphisms of the countable universal directed graph~$D$ and second
observations about $\scH$- and $\scD$\nbd classes of regular elements
of the endomorphism monoid~$D$:

\begin{thm}
  \label{thm:directed-idemp}
  Let $\Gamma$~be a countable directed graph.  Then
  \begin{enumerate}
  \item there exists an idempotent endomorphism~$f$ of the countable
    universal directed graph~$D$ such that $\im f \cong \Gamma$ if and
    only if\/ $\Gamma$~is algebraically closed;
  \item if\/ $\Gamma$~is algebraically closed, there are
    $2^{\aleph_{0}}$~idempotent endomorphisms~$f$ of~$D$ such that
    $\im f \cong \Gamma$. \qed
  \end{enumerate}
\end{thm}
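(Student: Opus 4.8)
The plan is to follow exactly the template of Theorem~\ref{thm:graph-idemp}, transferring each ingredient to the directed setting. First I would note that the ``only if'' direction is immediate: any existentially closed directed graph is algebraically closed, and algebraic closure is clearly inherited by the image of an endomorphism, since if $A'$ is a finite set of vertices in $\im f$, say $A' \subseteq Af$ for a finite set $A$ of vertices of $D$, then a vertex $v$ with $(u,v),(v,u)\in E$ for all $u \in A$ maps under $f$ to a vertex adjacent in both directions to every vertex of $A'$. (One must check the homomorphism sends bidirectional edges to bidirectional edges, which it does since a homomorphism preserves each directed edge separately.)

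For the ``if'' direction and part~(ii), I would invoke the directed-graph analogue of Lemma~\ref{lem:map-extend}, which the paper has already asserted ``transfers straight across'' using the construction via $\mathcal{H}(\Gamma)$ and $\Gamma_\infty \cong D$. Concretely: given the algebraically closed directed graph $\Gamma$, build $D$ as $\Gamma_\infty$ starting from $\Gamma_0 = \Gamma$, so that $\Gamma$ sits inside $D$ as an induced sub(directed)graph. Apply part~(ii) of the directed analogue of Lemma~\ref{lem:map-extend} to the identity map $\Gamma \to \Gamma$: since $\im(\mathrm{id}_\Gamma) = \Gamma$ is algebraically closed, there are $2^{\aleph_0}$ extensions $f \colon D \to D$ of $\mathrm{id}_\Gamma$ with $\im f = \Gamma$. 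Each such $f$ restricts to the identity on its image $\Gamma$, hence $f^2 = f$, so $f$ is an idempotent endomorphism of $D$ with $\im f \cong \Gamma$. This simultaneously gives part~(i) (existence of one such idempotent) and part~(ii) (there are $2^{\aleph_0}$ of them).

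The only genuine content beyond bookkeeping is verifying that the extension lemma really does go through for directed graphs, i.e.\ that in the directed version of $\mathcal{H}(\Gamma)$ one can always find the required new vertex to extend a partial homomorphism while keeping the image inside the fixed algebraically closed set. Here the key point is that the directed version of algebraic closure guarantees, for any finite set of already-placed image vertices, a vertex bidirectionally adjacent to all of them; and moreover there are infinitely many such vertices (algebraic closure of a countable structure, once it holds for one vertex, holds with infinitely many witnesses, since the structure is infinite and one can re-enlarge the finite set to exclude any finite collection of previous witnesses). This ``infinitely many choices at each stage'' is precisely what upgrades the single extension to $2^{\aleph_0}$ of them, exactly as in the undirected case. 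Since the paper has explicitly licensed us to assume this analogue, the write-up is essentially a one-paragraph repetition of the proof of Theorem~\ref{thm:graph-idemp}:

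\begin{proof}
  An existentially closed directed graph is certainly also
  algebraically closed and it is easy to see that this latter property
  is inherited by images of an endomorphism~$f$.

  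Conversely, if $\Gamma$~is algebraically closed, construct the
  countable universal directed graph as~$\Gamma_{\infty}$ by taking
  $\Gamma_{0} = \Gamma$, and let $f$~be one of the extensions
  to~$\Gamma_{\infty}$ of the identity map $\Gamma \to \Gamma$
  furnished by the directed analogue of
  Lemma~\ref{lem:map-extend}(ii).  We identify~$\Gamma_{\infty}$
  with~$D$.  Then the restriction of~$f$ to $\im f = \Gamma$ is the
  identity and so $f$~is an idempotent endomorphism of~$D$ with image
  isomorphic to~$\Gamma$.  Moreover, the freedom in choosing, at each
  stage, a vertex bidirectionally adjacent to a given finite set of
  image vertices (of which algebraic closure supplies infinitely many)
  ensures that there are in fact $2^{\aleph_{0}}$~many such idempotent
  endomorphisms.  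Consequently, part~(ii) also follows.
\end{proof}

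The main obstacle, such as it is, is purely one of confirming that nothing in the directed-graph amalgamation/closure setup breaks the extension argument --- but since the excerpt has already stated that Lemma~\ref{lem:map-extend} ``transfers straight across,'' there is no real obstacle, and the proof is a routine translation of the graph case.
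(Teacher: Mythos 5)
Your proposal is correct and takes exactly the route the paper does: the paper gives no separate proof of Theorem~\ref{thm:directed-idemp}, instead asserting that the directed analogue of Lemma~\ref{lem:map-extend} ``transfers straight across'' and that the argument of Theorem~\ref{thm:graph-idemp} then applies verbatim. Your added checks --- that homomorphisms preserve bidirectional edges and that algebraic closure in the directed sense yields infinitely many witnesses (by repeatedly enlarging the finite set to exclude earlier witnesses) --- supply precisely the details the paper leaves implicit.
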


\begin{thm}
  \label{thm:D-reg-classes}
  Let $D$~denote the countable universal directed graph.
  \begin{enumerate}
  \item Let\/ $\Gamma$~be a countable (undirected) graph.  Then there
    exist $2^{\aleph_{0}}$ distinct regular $\scD$\nbd classes
    of\/~$\End D$ whose group $\scH$\nbd classes are isomorphic
    to~$\Aut \Gamma$.
  \item Every regular $\scD$\nbd class of\/~$\End D$ contains
    $2^{\aleph_{0}}$~distinct group $\scH$\nbd classes. \qed
  \end{enumerate}
\end{thm}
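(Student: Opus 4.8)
The plan is to transcribe the proof of Theorem~\ref{thm:R-reg-classes} almost verbatim, replacing each graph-theoretic ingredient by its directed-graph analogue. All the structural machinery required is already in place: Lemma~\ref{lem:reg-classes}, Proposition~\ref{prop:reg-image} and Proposition~\ref{prop:Schutzgps} are stated for arbitrary relational structures and so apply directly to $\End D$, while the directed-graph counterparts of Lemma~\ref{lem:map-extend} and Theorem~\ref{thm:graph-idemp} are precisely the content of Theorem~\ref{thm:directed-idemp} together with the $\mathcal{H}(\Gamma)$-construction preceding it.

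For part~(i), I would begin with Theorem~\ref{thm:ac-directed}, which supplies $2^{\aleph_{0}}$~pairwise non-isomorphic countable algebraically closed directed graphs~$\Delta$ with $\Aut\Delta\cong\Aut\Gamma$. For each such~$\Delta$, Theorem~\ref{thm:directed-idemp}(i) produces an idempotent endomorphism~$f_{\Delta}$ of~$D$ with $\im f_{\Delta}\cong\Delta$. Since the images $\im f_{\Delta}$ are pairwise non-isomorphic, Lemma~\ref{lem:reg-classes}(iii) places the~$f_{\Delta}$ in $2^{\aleph_{0}}$~distinct $\scD$\nbd classes, each regular because it contains an idempotent. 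Proposition~\ref{prop:Schutzgps}(iii) then identifies the group $\scH$\nbd class of~$f_{\Delta}$: $H_{f_{\Delta}}\cong\Aut(\im f_{\Delta})\cong\Aut\Delta\cong\Aut\Gamma$, which gives part~(i).

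For part~(ii), I would fix a regular $\scD$\nbd class of~$\End D$ and an idempotent~$f$ lying in it, set $\Gamma=\im f$, and rebuild a copy of~$D$ as~$\Gamma_{\infty}$ by taking $\Gamma_{0}=\Gamma$ in the directed-graph construction. The directed analogue of Lemma~\ref{lem:map-extend}(ii) — which underlies Theorem~\ref{thm:directed-idemp}(ii) — yields $2^{\aleph_{0}}$~idempotent endomorphisms of~$D$ all having image equal to~$\Gamma$ but with pairwise distinct kernels. Each of these is $\scD$\nbd related to~$f$ by Lemma~\ref{lem:reg-classes}(iii), and by parts~(i) and~(ii) of that lemma they are $\scL$\nbd related to one another yet pairwise non-$\scR$\nbd related; hence they fall into $2^{\aleph_{0}}$~distinct group $\scH$\nbd classes of the chosen regular $\scD$\nbd class.

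Since every step is a mechanical translation, there is no genuine obstacle here; the only point that needs checking is that the auxiliary directed-graph results — above all the homomorphism-extension lemma behind Theorem~\ref{thm:directed-idemp} — really do transfer, and this has already been confirmed in the remarks preceding Theorem~\ref{thm:directed-idemp}. The argument is therefore routine bookkeeping.
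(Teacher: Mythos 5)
Your proposal is correct and matches the paper's intent exactly: the paper omits the proof of Theorem~\ref{thm:D-reg-classes}, stating only that it follows ``by identical methods'' from Section~\ref{sec:graph}, and your transcription is precisely that argument, with Theorem~\ref{thm:ac-directed}, Theorem~\ref{thm:directed-idemp}, Lemma~\ref{lem:reg-classes} and Proposition~\ref{prop:Schutzgps}(iii) playing the same roles as their undirected counterparts. The only cosmetic difference is that in part~(ii) you take idempotents with image \emph{equal} to $\Gamma$ and distinct kernels rather than image merely isomorphic to $\Gamma$; both variants yield the same conclusion via parts (i) and (ii) of Lemma~\ref{lem:reg-classes}.
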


It is a consequence of this theorem that the maximal subgroups of the
endomorphism monoid of the countable universal directed graph are, up
to isomorphism, the same as the maximal subgroups of the endomorphism
monoid of the countable universal (undirected) graph~$R$.

\spc

The formulae used in Section~\ref{sec:graph} to
define~$\Gamma^{\sharp}$ make perfect sense when $\Gamma$~is a
directed graph.  They provide us with a directed graph satisfying
analogous properties to those stated in Lemma~\ref{lem:sharp}.  This
is the primary tool in establishing the following results, which are
the directed graph analogues of~\ref{thm:graph-regLR},
\ref{thm:graph-nonregLR} and~\ref{cor:graph-Duncount}.  The proofs are
identical to those for undirected graphs, except for judicious
insertion of the word ``directed'', and so we omit them.

\begin{thm}
  \label{thm:directed-regLR}
  Every regular $\scD$\nbd class of the endomorphism monoid of the
  countable universal directed graph~$D$ contains
  $2^{\aleph_{0}}$~many $\scL$- and $\scR$\nbd classes.
\end{thm}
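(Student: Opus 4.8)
The plan is to mimic, essentially verbatim, the proof of Theorem~\ref{thm:graph-regLR} for the undirected case, inserting the word ``directed'' throughout and appealing to the directed-graph analogues of the supporting lemmas that have already been set up in this section. So I would begin by fixing an idempotent endomorphism~$f$ of~$D$ and letting $\Gamma = \im f$, which is algebraically closed by Theorem~\ref{thm:directed-idemp}. As the text explicitly notes, the formulae defining~$\Gamma^{\sharp}$ make sense for directed graphs and yield a directed graph with the analogue of Lemma~\ref{lem:sharp}; in particular $\Gamma^{\sharp}$ is again algebraically closed (as a directed graph) and for every $\mathbf{b} \in \{0,1\}^{\N}$ the induced sub-digraph on $\set{v_{i,b_i}}{i \in \N}$ is isomorphic to~$\Gamma$.

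For the $2^{\aleph_0}$~many $\scR$\nbd classes, I would construct $D \cong \Gamma_\infty$ with $\Gamma_0 = \Gamma$ using the $\mathcal{H}$\nbd construction recalled above, and then invoke the directed-graph analogue of Lemma~\ref{lem:map-extend}(ii) to obtain $2^{\aleph_0}$~extensions to~$D$ of the identity map on~$\Gamma$ all having image exactly~$\Gamma$. These are all idempotents with the same image, hence all $\scD$\nbd related to~$f$ by Lemma~\ref{lem:reg-classes}(iii) (which is stated for arbitrary relational structures and so applies directly), but they have pairwise distinct kernels and so are pairwise non-$\scR$\nbd related. For the $2^{\aleph_0}$~many $\scL$\nbd classes, I would instead take $\Gamma_0 = \Gamma^{\sharp}$ when building $\Gamma_\infty \cong D$, extend the identity on~$\Gamma^{\sharp}$ to an idempotent endomorphism~$g$ of~$D$ with image~$\Gamma^{\sharp}$, and for each $\mathbf{b}$ define $\phi_{\mathbf{b}}\colon \Gamma^{\sharp}\to\Gamma^{\sharp}$ by $v_{i,r}\phi_{\mathbf{b}} = v_{i,b_i}$. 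Since the edge relation of $\Gamma^{\sharp}$ only depends on the first subscripts, $\phi_{\mathbf{b}}$ is an endomorphism of~$\Gamma^{\sharp}$ (as a directed graph), so $g\phi_{\mathbf{b}}$ is an idempotent endomorphism of~$D$ whose image $\Lambda_{\mathbf{b}}$ is the induced sub-digraph on $\set{v_{i,b_i}}{i\in\N}$, isomorphic to~$\Gamma$; hence $g\phi_{\mathbf{b}} \mathrel{\scD} f$ by Lemma~\ref{lem:reg-classes}(iii). Distinct~$\mathbf{b}$ give literally distinct vertex sets $\Lambda_{\mathbf{b}}$, so by Lemma~\ref{lem:reg-classes}(i) these endomorphisms are pairwise non-$\scL$\nbd related.

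There is essentially no obstacle here beyond bookkeeping: the only thing one must check is that replacing ``symmetric irreflexive'' by ``irreflexive'' nowhere breaks an argument, and it does not, because every lemma being used (Lemma~\ref{lem:class-facts}, Lemma~\ref{lem:reg-classes}, Proposition~\ref{prop:reg-image}) is already stated for arbitrary relational structures, and the directed-graph analogues of Lemma~\ref{lem:map-extend} and Lemma~\ref{lem:sharp} have been asserted in the surrounding text. The one point deserving a sentence of care is that $\Gamma^{\sharp}$ really is algebraically closed \emph{as a directed graph} — i.e.\ for any finite set~$A$ of vertices there is a vertex~$w$ with $(w,a),(a,w)\in E^{\sharp}$ for all $a\in A$ — which follows from algebraic closure of~$\Gamma$ exactly as in Lemma~\ref{lem:sharp}(i), and that the identity-extension producing~$g$ preserves the image, which is the content of the directed analogue of Lemma~\ref{lem:map-extend}(ii). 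Concluding, the $\scD$\nbd class of~$f$ meets $2^{\aleph_0}$ distinct $\scR$\nbd classes and $2^{\aleph_0}$ distinct $\scL$\nbd classes, which is the claim.
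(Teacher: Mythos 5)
Your proposal is correct and is precisely what the paper does: the authors omit the proof of Theorem~\ref{thm:directed-regLR}, stating that it is identical to that of Theorem~\ref{thm:graph-regLR} with the word ``directed'' inserted, and your argument reproduces that proof faithfully, including the two separate constructions (extensions of the identity on~$\Gamma$ for the $\scR$\nbd classes and the $\Gamma^{\sharp}$ construction with the maps~$\phi_{\mathbf{b}}$ for the $\scL$\nbd classes).
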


\begin{thm}
  \label{thm:directed-nonregLR}
  Let $\Gamma$~be any algebraically closed directed graph that is not
  isomorphic to the countable universal directed graph~$D$.  Then
  there exists a non-regular injective endomorphism~$f$ of~$D$ such
  that the subgraph induced by the images of the vertices under~$f$ is
  isomorphic to~$\Gamma$ and such that the $\scD$\nbd class of~$f$
  contains $2^{\aleph_{0}}$ $\scR$- and $\scL$\nbd classes. \qed
\end{thm}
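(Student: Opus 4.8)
The plan is to carry over the proof of Theorem~\ref{thm:graph-nonregLR} essentially verbatim, reading every ``graph'' as ``directed graph'' and using the directed versions of the tools assembled above: the construction $\Gamma^{\sharp}$ (whose defining formulae make sense for digraphs and which satisfies the directed analogue of Lemma~\ref{lem:sharp}), the iterated construction $\Gamma_{\infty}\cong D$ via $\mathcal{H}$, the directed analogue of Lemma~\ref{lem:map-extend}, Corollary~\ref{cor:hom-from-random}, and the structure-free Lemma~\ref{lem:class-facts} and Proposition~\ref{prop:reg-image}. First I would build $\Gamma^{\sharp}$ from the given algebraically closed directed graph $\Gamma$, take $\Gamma_{0}=\Gamma^{\sharp}$ in the construction of $\Gamma_{\infty}\cong D$, so that the vertices $v_{i,r}$ may be regarded as lying in $D$, and let $\Lambda_{0}$ be the subgraph of $\Gamma^{\sharp}$ induced by $V_{0}=\{v_{i,0}:i\in\N\}$. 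The directed analogue of Lemma~\ref{lem:sharp}(ii) gives $\Lambda_{0}\cong\Gamma\not\cong D$, and Corollary~\ref{cor:hom-from-random} supplies a bijection $V\to V_{0}$ defining a homomorphism $f\colon D\to\Lambda_{0}$, which I view as an endomorphism of $D$ with $\langle Vf\rangle=\langle V_{0}\rangle\cong\Gamma$. Since $f$ is a vertex-bijection onto $\Lambda_{0}$ and $\Lambda_{0}\not\cong D$, the map $f$ is not an isomorphism, so some directed edge of $\langle V_{0}\rangle$ is not the image of any directed edge of $D$; hence $\im f\neq\langle Vf\rangle$ and $f$ is non-regular by Proposition~\ref{prop:reg-image}.

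Next I would show the $\scR$-class of $f$ meets $2^{\aleph_{0}}$~many $\scL$-classes inside the $\scD$-class of $f$ by the ``flip'' argument: for a sequence $\mathbf{b}=(b_{i})_{i\in\N}$ with $b_{i}\in\{0,1\}$, define $\psi_{\mathbf{b}}\colon\Gamma^{\sharp}\to\Gamma^{\sharp}$ by $v_{i,j}\psi_{\mathbf{b}}=v_{i,j+b_{i}}$, addition taken modulo $2$. Since $(v_{i,r},v_{j,s})\in E^{\sharp}$ precisely when $(v_{i},v_{j})\in E$, this $\psi_{\mathbf{b}}$ is an automorphism of $\Gamma^{\sharp}$; the directed analogue of Lemma~\ref{lem:map-extend}(iii) extends it to an automorphism $\hat{\psi}_{\mathbf{b}}$ of $D$. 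Then $f\hat{\psi}_{\mathbf{b}}$ is $\scR$-related to $f$, while $Vf\hat{\psi}_{\mathbf{b}}=V_{0}\psi_{\mathbf{b}}$ and distinct sequences give distinct vertex sets $V_{0}\psi_{\mathbf{b}}$, so by Lemma~\ref{lem:class-facts}(i) the endomorphisms $f\hat{\psi}_{\mathbf{b}}$ lie in $2^{\aleph_{0}}$~many distinct $\scL$-classes.

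For the $\scR$-classes I would mimic the second half of the graph proof. Let $\Delta$ be the disjoint union of a copy $D'$ of the countable universal directed graph and the edgeless directed graph $E$ on a countably infinite vertex set; taking $\Gamma_{0}=\Delta$ we may assume $\Delta=D'\disjunion E$ sits inside $D$. Fix an isomorphism $g\colon D'\to D$, and for each map $h\colon E\to D$ use the directed analogue of Lemma~\ref{lem:map-extend}(i) to obtain an endomorphism $\xi_{h}$ of $D$ extending both $g$ and $h$. Then $g^{-1}\xi_{h}f=f$ shows $\xi_{h}f$ is $\scL$-related to $f$ for every $h$, while $\ker\xi_{h}f=\ker\xi_{h}$ because $f$ is injective and $\ker\xi_{h}$ restricts on $E$ to $\ker h$; so picking $h$ from the $2^{\aleph_{0}}$ maps with pairwise distinct kernels and applying Lemma~\ref{lem:class-facts}(ii) yields $2^{\aleph_{0}}$~many endomorphisms in the $\scD$-class of $f$ that are pairwise non-$\scR$-related. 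Combining the two halves gives the theorem.

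I do not anticipate a real obstacle, since every ingredient has already been set up in the excerpt; the only points requiring a moment's attention are the directed analogues of Lemmas~\ref{lem:sharp} and~\ref{lem:map-extend}, and the verification that the particular maps $\psi_{\mathbf{b}}$ and $\xi_{h}$ respect the orientation of edges~--- which they do, by exactly the bookkeeping used in the undirected case.
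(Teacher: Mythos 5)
Your proposal is correct and is essentially the paper's own argument: the paper explicitly omits the proof of Theorem~\ref{thm:directed-nonregLR}, stating that it is identical to that of Theorem~\ref{thm:graph-nonregLR} ``except for judicious insertion of the word `directed','' which is precisely the transfer you carry out, including the $\Gamma^{\sharp}$ flip maps $\psi_{\mathbf{b}}$ for the $\scL$\nbd classes and the $D'\disjunion E$ construction with the maps $\xi_{h}$ for the $\scR$\nbd classes.
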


\begin{cor}
  There are $2^{\aleph_{0}}$ non-regular $\scD$\nbd classes in~$\End
  D$. \qed
\end{cor}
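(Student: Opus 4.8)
The plan is to follow the proof of Corollary~\ref{cor:graph-Duncount} essentially verbatim, replacing ``graph'' by ``directed graph'' and $R$~by~$D$ throughout. First I would invoke Theorem~\ref{thm:ac-directed} (say with $\Gamma$~the trivial graph, though any choice works) to obtain a family of $2^{\aleph_{0}}$~pairwise non-isomorphic countable algebraically closed directed graphs. At most one member of this family is isomorphic to the countable universal directed graph~$D$, so discarding that one (if present) leaves a family $\{\Gamma_{\lambda} : \lambda \in \Lambda\}$ of cardinality $2^{\aleph_{0}}$ consisting of countable algebraically closed directed graphs, none isomorphic to~$D$.

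Next, for each~$\lambda \in \Lambda$, Theorem~\ref{thm:directed-nonregLR} supplies a non-regular injective endomorphism $f_{\lambda}$ of~$D$ whose induced image substructure~$\langle Vf_{\lambda} \rangle$ is isomorphic to~$\Gamma_{\lambda}$. By Lemma~\ref{lem:class-facts}(iii), if $f_{\lambda}$~and~$f_{\mu}$ were $\scD$\nbd related then $\langle Vf_{\lambda} \rangle \cong \langle Vf_{\mu} \rangle$, whence $\Gamma_{\lambda} \cong \Gamma_{\mu}$ and so $\lambda = \mu$; thus distinct indices yield endomorphisms lying in distinct $\scD$\nbd classes. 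Each of these $\scD$\nbd classes is non-regular, since it contains the non-regular element~$f_{\lambda}$ and regularity is inherited by every element of a $\scD$\nbd class (\cite[Proposition~2.3.1]{Howie}): were the class regular, $f_{\lambda}$~would be too. This exhibits $2^{\aleph_{0}}$~distinct non-regular $\scD$\nbd classes in~$\End D$, which is all that is required; and since $D$~has a countable vertex set, $\End D$ has at most $2^{\aleph_{0}}$~$\scD$\nbd classes in total, so the count is sharp.

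I do not expect any real obstacle here: the corollary is the directed-graph shadow of Corollary~\ref{cor:graph-Duncount}, and all the substantive work has already been done in Theorems~\ref{thm:ac-directed} and~\ref{thm:directed-nonregLR}. The only point meriting an explicit word is that Theorem~\ref{thm:directed-nonregLR} is phrased for one algebraically closed directed graph at a time, so one should note that it applies uniformly across the whole $2^{\aleph_{0}}$-sized family (there are no compatibility conditions linking the various~$f_{\lambda}$), and that removing the single isomorphism type of~$D$ from a family of size $2^{\aleph_{0}}$ does not change its cardinality.
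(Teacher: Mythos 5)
Your proposal is correct and is exactly the argument the paper intends: the corollary is stated with its proof omitted because it is the verbatim directed-graph analogue of Corollary~\ref{cor:graph-Duncount}, combining Theorem~\ref{thm:ac-directed}, Theorem~\ref{thm:directed-nonregLR} and Lemma~\ref{lem:class-facts}(iii) in just the way you describe. Your extra remarks (discarding the at most one isomorphism type equal to~$D$, and that regularity is a property of a whole $\scD$\nbd class) are correct and only make explicit what the paper leaves implicit.
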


For the \Schutz\ groups of $\scH$\nbd classes of non-regular
endomorphisms of the countable universal directed graph $D = (V,E)$,
we proceed once more as in Section~\ref{sec:graph}.  If $\Gamma_{0} =
(V_{0},E_{0})$ is an algebraically closed directed graph, let $F_{0}
\subseteq E_{0}$ be such that $(V_{0},F_{0}) \cong D$ (as provided by
Proposition~\ref{prop:characterise-ac}(i)).  Assume that $D$~has been
constructed using $\Gamma_{0}$~in the initial step of our standard
method and let $f \colon D \to D$ be the endomorphism that realises
this isomorphism; that is, $f$~is given by a bijection $V \to V_{0}$
that induces a bijection from~$E$ to~$F_{0}$.  Then the same argument
as used to establish Proposition~\ref{prop:R-Schutz} gives:

\begin{prop}
  \label{prop:D-Schutz}
  Let $f$~be an injective endomorphism of the countable universal
  directed graph~$D$ of the form specified above and $H = H_{f}$.
  Then $\mathcal{S}_{H} \cong \Aut \langle Vf \rangle \cap \Aut( \im
  f)$. \qed
\end{prop}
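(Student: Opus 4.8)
The plan is to reproduce, essentially line for line, the argument that established Proposition~\ref{prop:R-Schutz}, replacing ``graph'' by ``directed graph'' and the construction~$\mathcal{G}$ by~$\mathcal{H}$ throughout. By Proposition~\ref{prop:Schutzgps}(ii) — which is stated for arbitrary relational structures — we already have an injective homomorphism $\phi \colon \mathcal{S}_{H} \to \Aut \langle Vf \rangle \cap \Aut(\im f)$ sending $\gamma_{t}$ to $t|_{Vf}$, so the whole proposition reduces to showing that $\phi$~is surjective. Concretely, with $D = (V,E)$ built around $\Gamma_{0} = (V_{0},E_{0})$ and $f$ realising the isomorphism $(V_{0},F_{0}) \cong D$ (with $F_{0} \subseteq E_{0}$ supplied by Proposition~\ref{prop:characterise-ac}(i)), we must show that every bijection $g \colon V_{0} \to V_{0}$ which is simultaneously an automorphism of $\langle Vf \rangle = (V_{0},E_{0})$ and of $\im f = (V_{0},F_{0})$ lies in the image of~$\phi$.

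The one step requiring care is lifting such a~$g$ into~$T_{H}$. First I would note that $g$, being an automorphism of the directed graph $\Gamma_{0}$ used in the initial step of the construction of $D \cong \Gamma_{\infty}$, extends to an automorphism~$\hat{g}$ of~$D$ by the directed-graph analogue of Lemma~\ref{lem:map-extend}(iii); as remarked in the text this transfers unchanged, the permutation of the adjoined vertices~$v_{i}$ now being forced by the induced permutation of the triples $(A_{i},B_{i},C_{i})$ under~$g$. Next I would check $\hat{g} \in T_{H}$. Since $f$~is injective and $g \in \Aut(\im f)$, Proposition~\ref{prop:Schutzgps}(iv) gives $fg \scL f$; and as $\hat{g}$~restricts to~$g$ on $Vf = V_{0}$ we have $fg = f\hat{g}$, while $f\hat{g} \scR f$ via $\hat{g}^{-1}$, so $f\hat{g} \scH f$. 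Applying the same computation to an arbitrary $h \in H$ (using $h \scL f$, hence $h\hat{g} \scL f\hat{g} = fg \scL f \scL h$, together with $h\hat{g} \scR h$) yields $H\hat{g} \subseteq H$, i.e.\ $\hat{g} \in T_{H}$. I expect no genuine obstacle here beyond confirming that the extension lemma and Proposition~\ref{prop:characterise-ac}(i) — both of which already cover directed graphs — do their job; the $\scH$-class bookkeeping is routine.

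Finally I would read off the image of~$\phi$: since $\gamma_{\hat{g}} \in \mathcal{S}_{H}$ and $\gamma_{\hat{g}}\phi = \hat{g}|_{V_{0}} = g$, the element~$g$ lies in the image, so combined with the reverse inclusion from Proposition~\ref{prop:Schutzgps}(ii) the image of~$\phi$ is exactly $\Aut(V_{0},E_{0}) \cap \Aut(V_{0},F_{0}) = \Aut \langle Vf \rangle \cap \Aut(\im f)$. As $\phi$~is an injective homomorphism onto this group, $\mathcal{S}_{H} \cong \Aut \langle Vf \rangle \cap \Aut(\im f)$, completing the proof.
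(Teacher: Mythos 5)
Your proposal is correct and follows essentially the same route as the paper, which itself proves Proposition~\ref{prop:D-Schutz} simply by asserting that ``the same argument as used to establish Proposition~\ref{prop:R-Schutz}'' carries over. Your reconstruction of that argument --- reducing to surjectivity of~$\phi$ via Proposition~\ref{prop:Schutzgps}(ii), lifting $g$ to $\hat{g}\in T_{H}$ via the directed-graph analogue of Lemma~\ref{lem:map-extend}(iii) and Proposition~\ref{prop:Schutzgps}(iv), and the $\scH$-class bookkeeping for arbitrary $h\in H$ --- matches the paper's argument for the undirected case step for step.
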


To apply this proposition, we make minor changes in the argument used to
establish Theorem~\ref{thm:R-uncountSchutz}.  Let $\Gamma$~be an
arbitrary countable (undirected) graph and let $S_{n}$, for $n \in
\N$, be a sequence of distinct subsets of~$\N \setminus \{0,1\}$ such
that the graph~$M_{S_{n}}$ (as defined towards the end of
Section~\ref{sec:graph}) is not isomorphic to any connected component
of~$\Gamma$.  Define $\Gamma_{0}^{\ast} = \Gamma^{\dagger}$ (the
complement of~$\Gamma$) and view this as a directed graph.  Then,
assuming that the directed graph~$\Gamma_{n}^{\ast}$ has been defined,
enumerate the triples of finite pairwise disjoint subsets
of~$\Gamma_{n}^{\ast}$ as $(A_{i},B_{i},C_{i})_{i \in \N}$.  Let the
vertices of~$\Gamma_{n+1}^{\ast}$ be the union of the vertices
of~$\Gamma_{n}^{\ast}$, the vertices of the graph~$L_{S_{n}}$ and new
vertices $\set{x_{i}^{(n)}, y_{i}^{(n)}, z_{i}^{(n)}}{i \in \N}$.
Define the edges of~$\Gamma_{n+1}^{\ast}$ to be the edges
of~$\Gamma_{n}^{\ast}$ together with, for all $i \in \N$,
\ $(x_{i}^{(n)},a)$ for $a \in A_{i}$, \ $(b,y_{i}^{(n)})$ for $b \in
B_{i}$, and both $(z_{i}^{(n)},c)$ and~$(c,z_{i}^{(n)})$ for $c \in
C_{i}$.  We define $\Gamma^{\ast} = (V^{\ast},E^{\ast})$ to be the
limit of this sequence of directed graphs, which is existentially
closed by construction and so isomorphic to the countable universal
directed graph~$D$.

Then let $\Gamma_{0} = (V^{\ast},E_{0})$ be the directed graph whose
edges are all possible edges between pairs of vertices except the
following are \emph{not} included:
\begin{enumerate}
\item the edges of~$\Gamma$;
\item for each $n \in \N$, all edges between distinct vertices of
  $\set{x_{i}^{(n)}, y_{i}^{(n)}, z_{i}^{(n)}}{i \in \N}$;
\item for each $n \in \N$, the edges in~$L_{S_{n}}$;
\item for each $n \in \N$, all edges between a vertex in~$L_{S_{n}}$
  and a vertex~$x_{i}^{(n)}$, $y_{i}^{(n)}$ or~$z_{i}^{(n)}$ and
  \emph{vice versa}.
\end{enumerate}
We have, of course, constructed an undirected graph but in this
context we shall view $\Gamma_{0}$~as a directed graph.  By
construction, $E^{\ast} \subseteq E_{0}$.  Therefore $\Gamma_{0}$~is
an algebraically closed directed graph and we use this graph when
applying Proposition~\ref{prop:D-Schutz}.  The endomorphism $f \colon
D \to D$ whose image is~$(V^{\ast},E^{\ast})$ is not regular since
$E^{\ast} \neq E_{0}$.

The complement~$\Gamma_{0}^{\dagger}$ is the disjoint union
of~$\Gamma$ and copies of~$M_{S_{n}}$ (namely the graph comprising the
vertices of~$L_{S_{n}}$ and the vertices $x_{i}^{(n)}$,~$y_{i}^{(n)}$
and~$z_{i}^{(n)}$ for $i \in \N$).  Hence
\[
\Aut \Gamma_{0} \cong \Aut \Gamma_{0}^{\dagger} \cong \Aut \Gamma
\times \prod_{n \in \N} \Aut M_{S_{n}} \cong \Aut \Gamma \times (\Sym
\N)^{\aleph_{0}}.
\]
The same argument as employed in Section~\ref{sec:graph} shows that
$\Aut(V^{\ast},E_{0}) \cap \Aut(V^{\ast},E^{\ast}) \cong \Aut
\Gamma$.  Equally, by varying the subset~$S_{1}$ we produce
$2^{\aleph_{0}}$~many $\scD$\nbd classes of the endomorphism~$f$.
Therefore, the analogue of Theorem~\ref{thm:R-uncountSchutz} holds for
the countable universal directed graph:

\begin{thm}
  \label{thm:D-uncountSchutz}
  Let\/ $\Gamma$~be any countable (undirected) graph.  There are
  $2^{\aleph_{0}}$ non-regular $\scD$-classes of the countable
  universal directed graph~$D$ such that the \Schutz\ groups of
  $\scH$\nbd classes within them are isomorphic to~$\Aut \Gamma$.\qed
\end{thm}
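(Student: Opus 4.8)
The plan is to transcribe the argument behind Theorem~\ref{thm:R-uncountSchutz} into the setting of directed graphs, using Proposition~\ref{prop:D-Schutz} in place of Proposition~\ref{prop:R-Schutz}. Recall that Proposition~\ref{prop:D-Schutz} says: if the countable universal directed graph~$D$ is built around an algebraically closed directed graph $\Gamma_{0} = (V_{0},E_{0})$, and $f \colon D \to D$ is the injective endomorphism realising an isomorphism $(V_{0},F_{0}) \cong D$ for some $F_{0} \subseteq E_{0}$, then $\mathcal{S}_{H_{f}} \cong \Aut \langle Vf \rangle \cap \Aut(\im f) = \Aut(V_{0},E_{0}) \cap \Aut(V_{0},F_{0})$. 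So it suffices, given a countable undirected graph~$\Gamma$, to produce a family of such pairs $\bigl((V_{0},E_{0}),(V_{0},F_{0})\bigr)$ for which (a)~$\Aut(V_{0},E_{0}) \cap \Aut(V_{0},F_{0}) \cong \Aut \Gamma$; (b)~$E_{0} \neq F_{0}$, so that $f$~is non-regular by Proposition~\ref{prop:reg-image}; and (c)~$2^{\aleph_{0}}$ of the resulting endomorphisms lie in pairwise distinct $\scD$\nbd classes.

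First I would build the directed analogue of the graph~$\Gamma^{\ast}$ from Section~\ref{sec:graph}. Choose a sequence $(S_{n})_{n \in \N}$ of distinct subsets of~$\N \setminus \{0,1\}$ such that no~$M_{S_{n}}$ is isomorphic to a connected component of~$\Gamma$, set $\Gamma_{0}^{\ast} = \Gamma^{\dagger}$ viewed as a directed graph, and at each stage adjoin the vertices of~$L_{S_{n}}$ together with new vertices $x_{i}^{(n)}, y_{i}^{(n)}, z_{i}^{(n)}$ for $i \in \N$, enumerating now the triples of finite pairwise disjoint subsets of the current graph and attaching the new vertices so that $x_{i}^{(n)}$ supplies the out\nbd edges to the first set of the triple, $y_{i}^{(n)}$ the in\nbd edges from the second, and $z_{i}^{(n)}$ edges in both directions with the third. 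This should be arranged so that the limit $\Gamma^{\ast} = (V^{\ast},E^{\ast})$ is existentially closed as a directed graph and hence isomorphic to~$D$. Then take $\Gamma_{0} = (V^{\ast},E_{0})$ to be the directed graph containing every possible edge except (i)~the edges of~$\Gamma$, (ii)~for each~$n$, all edges among the stage\nbd$n$ vertices $x_{i}^{(n)}, y_{i}^{(n)}, z_{i}^{(n)}$, (iii)~for each~$n$, the edges of~$L_{S_{n}}$, and (iv)~for each~$n$, all edges between~$L_{S_{n}}$ and the stage\nbd$n$ new vertices. By construction $E^{\ast} \subseteq E_{0}$, so $\Gamma_{0}$~is algebraically closed, and the endomorphism $f \colon D \to D$ with $\im f = (V^{\ast},E^{\ast})$ is non-regular because $E^{\ast} \neq E_{0}$.

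Next I would identify the automorphism groups. The complement~$\Gamma_{0}^{\dagger}$ is the disjoint union of~$\Gamma$ and one copy of~$M_{S_{n}}$ for each $n \in \N$ (built from~$L_{S_{n}}$ and the stage\nbd$n$ auxiliary vertices), so, using $\Aut L_{S_{n}} = \1$ from Lemma~\ref{lem:L-props}(i),
\[
\Aut \Gamma_{0} \cong \Aut \Gamma_{0}^{\dagger} \cong \Aut \Gamma \times \prod_{n \in \N} \Aut M_{S_{n}} \cong \Aut \Gamma \times (\Sym \N)^{\aleph_{0}}.
\]
A bijection of~$V^{\ast}$ that is simultaneously an automorphism of $(V^{\ast},E_{0})$ and of $(V^{\ast},E^{\ast})$ must therefore restrict to an automorphism of~$\Gamma$, fix each~$L_{S_{n}}$ pointwise, and permute each family of auxiliary vertices; but the structure of~$E^{\ast}$ leaves exactly one admissible choice of these permutations once the automorphism of~$\Gamma$ is fixed (at each stage the vertices $x_{i}^{(n)}, y_{i}^{(n)}, z_{i}^{(n)}$ are determined by the image of the $i$th triple). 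Hence restriction to the vertices of~$\Gamma$ is an isomorphism $\Aut(V^{\ast},E_{0}) \cap \Aut(V^{\ast},E^{\ast}) \to \Aut \Gamma$, and by Proposition~\ref{prop:D-Schutz} the \Schutz\ group of the $\scH$\nbd class of~$f$ is isomorphic to~$\Aut \Gamma$.

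Finally, to obtain $2^{\aleph_{0}}$ distinct $\scD$\nbd classes I would fix~$S_{n}$ for all $n \geq 2$ and let~$S_{1}$ range over the $2^{\aleph_{0}}$ admissible subsets, obtaining non-regular endomorphisms $f_{S_{1}}$ each with $\mathcal{S}_{H_{f_{S_{1}}}} \cong \Aut \Gamma$; since $\langle V f_{S_{1}} \rangle^{\dagger} = \Gamma_{0}^{\dagger}$ has $M_{S_{1}}$ among its connected components and $M_{S_{1}} \not\cong M_{S_{1}'}$ whenever $S_{1} \neq S_{1}'$ (Lemma~\ref{lem:L-props}(iii)), the induced substructures $\langle V f_{S_{1}} \rangle$ are pairwise non-isomorphic and so the $f_{S_{1}}$ lie in distinct $\scD$\nbd classes by Lemma~\ref{lem:class-facts}(iii). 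I expect the one genuinely delicate point to be step two: verifying that the stage\nbd wise attachment of the triples $x_{i}^{(n)}, y_{i}^{(n)}, z_{i}^{(n)}$ really does force the generic directed-graph extension property in the limit, so that $(V^{\ast},E^{\ast}) \cong D$, while at the same time keeping the complement a clean disjoint union of~$\Gamma$ with the copies of~$M_{S_{n}}$. Everything else is a transcription of the undirected argument with the word ``directed'' inserted.
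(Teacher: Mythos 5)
Your proposal follows the paper's argument essentially verbatim: the same iterated construction of~$\Gamma^{\ast}$ with auxiliary vertices $x_{i}^{(n)}$, $y_{i}^{(n)}$, $z_{i}^{(n)}$ attached separately to the three components of each triple, the same choice of $\Gamma_{0} = (V^{\ast},E_{0})$ with the four families of excluded edges, the same computation $\Aut \Gamma_{0} \cong \Aut \Gamma \times (\Sym \N)^{\aleph_{0}}$ leading to $\mathcal{S}_{H_{f}} \cong \Aut \Gamma$ via Proposition~\ref{prop:D-Schutz}, and the same variation of~$S_{1}$ to separate $2^{\aleph_{0}}$ $\scD$\nbd classes using Lemma~\ref{lem:class-facts}(iii). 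The one ``delicate point'' you flag --- that the limit of the stage-wise attachment is existentially closed --- is precisely the step the paper asserts ``by construction'' without further comment, so your proposal is a faithful transcription of the paper's own proof.
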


\section{Bipartite graphs}
\label{sec:bipartite}

One usually defines an (undirected) graph $\Gamma = (V,E)$ to be
bipartite if there exists a function $c \colon V \to \{0,1\}$ such
that $c(u) \neq c(v)$ whenever $(u,v) \in E$.  However, as noted by
Evans in~\cite[Section~2.2.2]{Evans}, it is easy to observe that the
class of finite graphs satisfying this condition does not have the
amalgamation property and so we cannot speak of the Fra\"{\i}ss\'{e}
limit of such graphs.  The solution is to encode the partition of the
vertex set~$V$ via an additional equivalence relation.  Accordingly,
we define a \emph{bipartite graph} to be a relational structure
$\Gamma = (V,E,P)$ such that $E$~is an irreflexive symmetric binary
relation on~$V$, \ $P = (V_{0} \times V_{0}) \cup (V_{1} \times
V_{1})$ for some partition $V = V_{0} \disjunion V_{1}$ of the vertex
set, and if $(u,v) \in E$ then $(u,v) \not\in P$ (which means one of
$u$~and~$v$ belongs to~$V_{0}$ and the other belongs to~$V_{1}$).

Let $\Gamma = (V,E,P)$ and $\Delta = (W,F,Q)$ be bipartite graphs in
this sense, where $V = V_{0} \disjunion V_{1}$ and $W = W_{0}
\disjunion W_{1}$ are the partitions of the vertex sets given by
$P$~and~$Q$, respectively.  It follows from the definition that if $f
\colon \Gamma \to \Delta$ is a homomorphism, then $V_{0}f$~is
contained in one of $W_{0}$~or~$W_{1}$ and similarly for~$V_{1}f$.
Moreover, if $f$~is an embedding then either $V_{0}f \subseteq W_{0}$
and $V_{1}f \subseteq W_{1}$, or $V_{0}f \subseteq W_{1}$ and $V_{1}f
\subseteq W_{0}$.  This enables one to show that the class of
bipartite graphs (according to our definition) satisfies the
amalgamation property and hence there is a unique Fra\"{\i}ss\'{e}
limit of the finite bipartite graphs, the \emph{countable universal
  bipartite graph}~$B$.

A particular observation from the previous paragraph is that if $f$~is
an automorphism of the bipartite graph $\Gamma = (V,E,P)$, where $P =
(V_{0} \times V_{0}) \cup (V_{1} \times V_{1})$, then either $V_{0}f =
V_{0}$ and $V_{1}f = V_{1}$, or $V_{0}f = V_{1}$ and $V_{1}f = V_{0}$.
We call~$f$ \emph{part-fixing} if $V_{0}f = V_{0}$ and $V_{1}f =
V_{1}$.  The following observation is straightforward.

\begin{lemma}
  \label{lem:autom-w/o-bp}
  Let $\Gamma = (V,E,P)$ be a bipartite graph such that the
  graph~$(V,E)$ is connected.  Then $\Aut \Gamma = \Aut (V,E)$.
\end{lemma}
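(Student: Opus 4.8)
The plan is to prove the two inclusions separately. One of them is essentially free: an automorphism of the relational structure $\Gamma = (V,E,P)$ is in particular a bijection of~$V$ preserving the edge relation~$E$ (in both directions), so $\Aut \Gamma \subseteq \Aut(V,E)$ with no hypothesis needed. All the work is in the reverse inclusion, namely showing that every graph automorphism of~$(V,E)$ automatically preserves the partition relation~$P$ when $(V,E)$ is connected.

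The key point is that, for a connected bipartite graph, the relation~$P$ is definable from~$E$ alone. Write $V = V_{0} \disjunion V_{1}$ for the partition with $P = (V_{0} \times V_{0}) \cup (V_{1} \times V_{1})$. Since every edge of~$\Gamma$ runs between $V_{0}$ and~$V_{1}$, I would verify that for $u,v \in V$ one has $(u,v) \in P$ if and only if there is a walk of even length from~$u$ to~$v$ in~$(V,E)$. Connectedness guarantees that some walk from~$u$ to~$v$ exists, and the bipartite condition forces all walks between a fixed ordered pair of vertices to have the same length-parity, so this characterization is well posed; one direction follows by tracking which part each successive vertex of a walk lies in, and the other is immediate.

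Granting this, the argument concludes quickly. Let $f \in \Aut(V,E)$ and suppose $u = w_{0}, w_{1}, \dots, w_{n} = v$ is a walk in~$(V,E)$; then, because $f$~preserves edges, $uf = w_{0}f, w_{1}f, \dots, w_{n}f = vf$ is a walk of the same length~$n$, and the same holds for~$f^{-1}$. Thus $f$~preserves the parity of walk-lengths between pairs of vertices, so by the previous paragraph $(u,v) \in P$ if and only if $(uf,vf) \in P$. Hence $f$~preserves~$P$ as well as~$E$, so $f \in \Aut \Gamma$, which gives $\Aut(V,E) \subseteq \Aut \Gamma$ and therefore equality.

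I do not expect a genuine obstacle here: the statement is elementary once one isolates the right fact, that the bipartition of a connected bipartite graph is recoverable (up to interchanging the two parts, which leaves~$P$ unchanged) from the edge relation. The only mild care required is the degenerate case $\order{V} = 1$, where $E = \emptyset$ and one part of the partition is empty, so that both automorphism groups are trivial and the claim holds vacuously; everything else reduces to the routine walk-parity bookkeeping sketched above.
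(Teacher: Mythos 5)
Your proof is correct. The paper offers no argument for this lemma at all---it is introduced with ``The following observation is straightforward'' and left unproved---so there is nothing to compare against; your walk-parity argument (that in a connected bipartite graph the relation~$P$ is recoverable from~$E$ as ``joined by a walk of even length,'' hence automatically preserved by any automorphism of~$(V,E)$) is a standard and complete way to supply the missing justification, and your handling of the one-vertex degenerate case is a sensible precaution.
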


To address which groups could arise as the group $\scH$\nbd classes
and \Schutz\ groups of the countable universal bipartite graph, we
shall first observe that any group arising as the automorphism group
of a graph also arises as that of a bipartite graph, and \emph{vice
  versa}.  To achieve the first of these, let $\Gamma = (V,E)$ be any
countable graph.  We now associate a countable bipartite graph
$\Gamma' = (V',E',P')$ to~$\Gamma$.  Enumerate the vertices
of~$\Gamma$ as $V = \set{v_{i}}{i \in I}$ (where $I \subseteq \N$).
Whenever there is an edge joining vertices $v_{i}$~and~$v_{j}$
in~$\Gamma$ (with $i < j$), choose a new vertex~$x_{ij}$ so that the
set $X = \set{x_{ij}}{(v_{i},v_{j}) \in E, \: i < j}$ is a set
disjoint from~$V$.  Set $V' = V \cup X$, \ $P' = (V \times V) \cup (X
\times X)$ and
\[
E' = \set{(v_{i},x_{ij}), (v_{j},x_{ij}), (x_{ij},v_{i}),
  (x_{ij},v_{j})}{ (v_{i},v_{j}) \in E, \: i < j}.
\]
(Intuitively, we have added a new vertex~$x_{ij}$ in the middle of
each original edge~$(v_{i},v_{j})$ and the partition of our new
graph~$\Gamma'$ is into ``old vertices'' and ``new middle-edge
vertices''.)

Define $\Gamma(v)$~to be the set of vertices in~$\Gamma$ that are
joined by an edge to~$v$.

\begin{lemma}
  \label{lem:Aut-Gamma'}
  Let\/ $\Gamma = (V,E)$~be a countable graph such that\/
  $\order{\Gamma(v)} \geq 3$ for all $v \in V$.  Then $\Aut \Gamma'
  \cong \Aut \Gamma$ and every automorphism of\/~$\Gamma'$ is
  part-fixing.
\end{lemma}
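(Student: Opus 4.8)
The plan is to analyse $\Gamma'$ through vertex degrees, exactly as in the proof of Lemma~\ref{lem:dashv}. By construction a vertex $x_{ij}$ is adjacent in $\Gamma'$ only to $v_i$ and $v_j$, so every vertex of the set $X$ has degree~$2$; on the other hand each $v \in V$ is adjacent in $\Gamma'$ to the middle-edge vertex of each of its $\order{\Gamma(v)}$ edges in $\Gamma$, so the hypothesis $\order{\Gamma(v)} \geq 3$ forces every vertex of $V$ to have degree at least~$3$. Hence for any $f \in \Aut\Gamma'$ we must have $Vf = V$ and $Xf = X$, and since $\{V, X\}$ is precisely the bipartition recorded by $P'$ this already shows that every automorphism of $\Gamma'$ is part-fixing.

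Next I would define $\phi \colon \Aut\Gamma' \to \Aut\Gamma$ by $f\phi = f|_V$; by the previous paragraph this is a bijection of $V$. A useful remark is that, because $\Gamma$ is a simple graph, two distinct vertices $v_i, v_j \in V$ have a common neighbour in $\Gamma'$ if and only if $(v_i,v_j) \in E$, in which case that common neighbour is the unique vertex $x_{ij}$. Applying $f$ to this statement (and to its negation) shows that $(v_i,v_j) \in E$ if and only if $(v_if, v_jf) \in E$, so $f|_V \in \Aut\Gamma$; it also shows that the action of $f$ on $X$ is completely determined by its action on $V$, so $\phi$ is injective. That $\phi$ is a homomorphism is immediate.

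Finally I would check that $\phi$ is surjective. Given $h \in \Aut\Gamma$, put $v_if = v_ih$ for all $i$, and for each edge $(v_i,v_j) \in E$ with $i<j$ let $k < \ell$ be the indices with $\{v_k, v_\ell\} = \{v_ih, v_jh\}$ and set $x_{ij}f = x_{k\ell}$; one checks directly that $f$ is a well-defined bijection of $V'$ preserving $P'$ and $E'$, so $f \in \Aut\Gamma'$ with $f\phi = h$. Therefore $\phi$ is an isomorphism. The argument is routine; the only place the hypothesis is used is the strict inequality $\order{\Gamma(v)} > 2$ that separates old vertices from middle-edge vertices by degree, and the only mild bookkeeping nuisance is keeping track of the ordered index pairs $i<j$ that label the vertices of $X$ when constructing the preimage of $h$.
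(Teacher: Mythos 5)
Your proof is correct and follows essentially the same route as the paper's: the degree argument ($x_{ij}$ of degree $2$ versus $v\in V$ of degree $\order{\Gamma(v)}\geq 3$) to get $Vf=V$ and $Xf=X$, hence part-fixing, followed by restriction to $V$ giving an injective homomorphism $\phi$ whose surjectivity is checked by the explicit extension of $h\in\Aut\Gamma$ to the middle-edge vertices. Your common-neighbour phrasing of why $f|_V$ preserves and reflects edges is just a mild repackaging of the paper's observation that each edge $(u,v)\in E$ corresponds to the unique $x\in X$ with $(u,x),(x,v)\in E'$.
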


\begin{proof}
  If $f \in \Aut \Gamma'$, then $f$~defines an automorphism of the
  graph~$(V',E')$.  Since each $x \in X$ has degree~$2$ and each
  vertex $v \in V$ has the same degree in~$\Gamma'$ as in~$\Gamma$, we
  conclude from the hypothesis that $Xf = X$ and $Vf = V$.  In other
  words, each automorphism of~$\Gamma'$ is part-fixing.

  Now define a map $\phi \colon \Aut \Gamma' \to \Aut \Gamma$ by
  defining~$f\phi$ to be the restriction of~$f$ to the elements
  of~$V$.  We have observed that $f$~restricts to a bijection $V \to
  V$.  If $(u,v) \in E$, then there is a unique~$x \in X$ such that
  $(u,x),(x,v) \in E'$.  Then $(uf,xf),(xf,vf) \in E'$, as $f$~is an
  automorphism of~$\Gamma'$, and the definition of our bipartite
  graph~$\Gamma'$ then tells us that $(uf,vf) \in E$.  Similarly, if
  $(uf,vf) \in E$, then we deduce $(u,v) \in E$ and we conclude that
  $f\phi$~does indeed define an automorphism of the graph~$\Gamma$.

  Then $\phi$~is a homomorphism from~$\Aut \Gamma'$ to~$\Aut \Gamma$.
  It is injective, since the effect of~$f$ on the vertices in~$V$
  completely determines the effect on the vertices in~$X$.  If $h \in
  \Aut \Gamma$, we may extend~$h$ to an automorphism~$\tilde{h}$
  of~$\Gamma'$ by mapping a vertex~$x \in X$ satisfying $(u,x),(x,v)
  \in E$ to the unique vertex~$y \in X$ satisfying $(uh,y),(y,vh) \in
  E$.  Then $\tilde{h} \in \Aut \Gamma'$ and $\tilde{h}\phi = h$,
  completing the proof.
\end{proof}

We shall also need analogues of the constructions in
Section~\ref{sec:graph} for bipartite graphs.  First, if $\Gamma =
(V,E,P)$ is a bipartite graph, with $P = (V_{0} \times V_{0}) \cup
(V_{1} \times V_{1})$, define the \emph{bipartite
  complement}~$\Gamma^{\ddagger}$ to be $\Gamma^{\ddagger} =
(V,E^{\ddagger},P)$ where
\[
E^{\ddagger} = (V \times V) \setminus ( E \cup P ).
\]
Then by construction, $\Gamma^{\ddagger}$~is a bipartite graph with
the same vertex partition as~$\Gamma$ such that, for $u \in V_{0}$ and
$v \in V_{1}$, \ $(u,v)$~is an edge in~$\Gamma^{\ddagger}$ if and only
if $(u,v)$~is not an edge in~$\Gamma$.

If $\Gamma = (V,E,P)$, where $P = (V_{0} \times V_{0}) \cup (V_{1}
\times V_{1})$, and $\Delta = (V',E',P')$, where $P' = (V'_{0} \times
V'_{0}) \cup (V'_{1} \times V'_{1})$, are bipartite graphs, then we
define the \emph{bipartite disjoint union} of $\Gamma$~and~$\Gamma'$
to be
\[
\Gamma \disjsqunion \Delta = (V \cup V', E \cup E', Q)
\]
where $Q = (V_{0} \cup V'_{0}) \times (V_{0} \cup V'_{0}) \cup (V_{1}
\cup V'_{1}) \times (V_{1} \cup V'_{1})$.  This definition depends
upon our choice for $V_{0}$,~$V_{1}$, $V'_{0}$ and~$V'_{1}$ (i.e.,
which way round we pair the parts of the vertex sets) so strictly
speaking we must refer to \emph{a} bipartite disjoint union of
$\Gamma$~and~$\Delta$.  This choice, however, does not affect the
results in this section.

Finally, we shall use the following adjustment to produce a bipartite
analogue of the graph~$L_{S}$ defined in Section~\ref{sec:graph}.  For
$S \subseteq \N \setminus \{ 0,1 \}$, take $\Lambda_{S} =
(V_{S},E_{S},P_{S})$, where $V_{S}$~and~$E_{S}$ are the vertices and
edges of~$L_{S}$ as defined in Section~\ref{sec:graph} and $P_{S} =
(V_{0} \times V_{0}) \cup (V_{1} \times V_{1})$ where
\begin{align*}
V_{0} &= \set{\ell_{n}}{\text{$n$~is even}} \cup \set{v_{n}}{\text{$n
    \in S$ is odd}} \\
V_{1} &= \set{\ell_{n}}{\text{$n$~is odd}} \cup \set{v_{n}}{\text{$n
      \in S$ is even}}.
\end{align*}

With these constructions, we easily establish analogues of
Lemmas~\ref{lem:complement} and~\ref{lem:L-props}:  If
$\Gamma$~and~$\Delta$ are bipartite graphs then $\Aut
\Gamma^{\ddagger} = \Aut \Gamma$ and $\Gamma \cong \Delta$ if and only
if $\Gamma^{\ddagger} \cong \Delta^{\ddagger}$.  Also if $S$~and~$T$
are subsets of~$\N \setminus \{0,1\}$, then $\Aut \Lambda_{S} = \1$
and $\Lambda_{S} \cong \Lambda_{T}$ if and only if $S = T$.

\begin{thm}
  \label{thm:bipartite-automs}
  The class of groups arising as automorphism groups of countable
  graphs is precisely the same as the class of groups arising as
  automorphism groups of countable bipartite graphs.
\end{thm}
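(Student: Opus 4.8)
The statement is an equality of two classes of groups, so the plan is to prove the two inclusions in turn; the second is the substantial one, in the spirit of Proposition~\ref{prop:directed-automs}.

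For the inclusion that every automorphism group of a countable graph is realised by a countable bipartite graph, I would first replace a given countable graph~$\Gamma$ by one of minimum degree at least~$3$ without changing the automorphism group: choose $S \subseteq \N \setminus \{0,1\}$ so that $L_{S}$ is isomorphic to no connected component of~$\Gamma$, and set $\Gamma_{1} = (\Gamma \disjunion L_{S})^{\dagger}$. Then every vertex of~$\Gamma_{1}$ is adjacent to all but at most three vertices of the infinite graph~$L_{S}$, so all degrees in~$\Gamma_{1}$ are infinite, while $\Aut \Gamma_{1} = \Aut \Gamma_{1}^{\dagger} = \Aut(\Gamma \disjunion L_{S}) \cong \Aut \Gamma \times \Aut L_{S} \cong \Aut \Gamma$ by Lemmas~\ref{lem:complement}(i) and~\ref{lem:L-props}(i), exactly as in the proof of Theorem~\ref{thm:ac-graphs}. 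Lemma~\ref{lem:Aut-Gamma'} now applies to~$\Gamma_{1}$ and yields a countable bipartite graph~$\Gamma_{1}'$ with $\Aut \Gamma_{1}' \cong \Aut \Gamma_{1} \cong \Aut \Gamma$.

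For the reverse inclusion, let $\Delta = (V,E,P)$ be a countable bipartite graph with parts $V_{0}, V_{1}$. One may assume both parts are non-empty, the contrary case being trivial: then $\Delta$ is edgeless and $\Aut \Delta = \Sym V$, which is already the automorphism group of the edgeless graph on~$V$. The naive reduction to the graph $(V,E)$ works only when $(V,E)$ is connected (Lemma~\ref{lem:autom-w/o-bp}), so I would instead build a connected bipartite graph~$\Lambda$ around~$\Delta$ with $\Aut \Lambda \cong \Aut \Delta$ and apply Lemma~\ref{lem:autom-w/o-bp} to it. The subtlety is that $\Aut \Delta$ may contain automorphisms interchanging $V_{0}$ and~$V_{1}$, so the connectifying device must be symmetric enough for these to survive yet rigid enough to add nothing new; attaching an $L_{S}$, whose automorphism group is trivial, would destroy the part-interchanging automorphisms. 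I would therefore fix a connected bipartite graph $R = (W,F,Q)$ with parts $W_{0}, W_{1}$ and with $\Aut R \cong \mathbb{Z}/2\mathbb{Z}$ generated by an involution interchanging $W_{0}$ and~$W_{1}$ --- such graphs are built in the style of the graphs $L_{S}$ and $\Lambda_{S}$ (e.g.\ two oppositely-oriented copies of a rigid $\Lambda_{S}$ joined by a short rigid bridge), and since there are $2^{\aleph_{0}}$~non-isomorphic ones, $R$ may be chosen isomorphic to no connected component of~$\Delta$ (as $R$ has a part-interchanging automorphism it is isomorphic to itself with the parts interchanged, so this single condition covers that variant too). Then I would let $\Lambda$ have vertex set $V \cup W \cup \{p,q\}$, retain the edges of~$\Delta$ and of~$R$, and make $p$ adjacent to everything in $V_{0} \cup W_{0} \cup \{q\}$ and $q$ adjacent to everything in $V_{1} \cup W_{1} \cup \{p\}$. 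This $\Lambda$ is a bipartite graph with parts $V_{0} \cup W_{0} \cup \{q\}$ and $V_{1} \cup W_{1} \cup \{p\}$, and its underlying graph is connected.

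The remaining step is the computation $\Aut \Lambda \cong \Aut \Delta$, which I would do by hand. Since $\Delta \disjsqunion R$ is a disjoint union of non-empty graphs and $R$ has no vertex adjacent to a whole part, no vertex of $\Delta \disjsqunion R$ dominates the opposite part, so in~$\Lambda$ the vertices adjacent to the whole of their opposite part are precisely $p$ and~$q$. Hence every automorphism of~$\Lambda$ permutes $\{p,q\}$ and so fixes or interchanges the two parts of~$\Lambda$; deleting $p$ and~$q$ leaves the components of~$\Delta$ together with~$R$, and the choice of~$R$ forces any automorphism to send $R$ to~$R$, hence $V$ to~$V$, with its action on~$R$ determined (identity, or the part-interchanging involution) by whether it fixes or interchanges the parts. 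Restriction to~$V$ therefore gives a homomorphism $\Aut \Lambda \to \Aut \Delta$; it is injective because an automorphism fixing~$V$ pointwise fixes the parts, hence fixes $p$ and~$q$ and acts trivially on~$R$, and it is surjective because a part-fixing automorphism of~$\Delta$ extends by the identity on $W \cup \{p,q\}$ and a part-interchanging one by the involution of~$R$ together with $p \leftrightarrow q$, both of which one checks to be automorphisms of~$\Lambda$. Then $\Aut \Lambda \cong \Aut \Delta$, and Lemma~\ref{lem:autom-w/o-bp} identifies this with the automorphism group of the graph underlying~$\Lambda$. The single non-routine ingredient is the existence of the ``$\mathbb{Z}/2$-rigid'' bipartite graphs~$R$ --- connected, with a unique nontrivial automorphism and that automorphism swapping the parts, available in enough isomorphism types to avoid the countably many components of~$\Delta$ --- and that is where I expect the real work to lie; everything else is bookkeeping.
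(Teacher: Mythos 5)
Your first inclusion is exactly the paper's argument: pass to $(\Gamma \disjunion L_S)^{\dagger}$ to force infinite degrees and then apply Lemma~\ref{lem:Aut-Gamma'}; nothing to add there. For the reverse inclusion you take a genuinely different route, and the difference is instructive. The paper simply sets $\Delta_S = (\Gamma \disjsqunion \Lambda_S)^{\ddagger}$, notes the underlying graph is connected, and concludes via Lemma~\ref{lem:autom-w/o-bp} that $\Aut(W,F) = \Aut\Delta_S \cong \Aut\Gamma$. Your objection to this style of construction --- that attaching the rigid $\Lambda_S$, which meets both parts, forces every automorphism of the union to be part-fixing and so discards any part-interchanging automorphisms of $\Gamma$ --- is well founded, and in fact it applies verbatim to the paper's own argument: $\Aut\bigl((\Gamma \disjsqunion \Lambda_S)^{\ddagger}\bigr)$ is only the part-fixing subgroup of $\Aut\Gamma$, which has index two whenever $\Gamma$ admits a part-swapping automorphism (already $\Gamma = K_{1,1}$, where the paper's construction yields the trivial group rather than $\mathbb{Z}/2\mathbb{Z}$). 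Your repair --- connectifying with a graph $R$ whose full automorphism group is a single part-swapping involution, attached symmetrically through the two apex vertices $p,q$ --- is sound: the identification of $p,q$ as the unique vertices dominating their opposite parts, the injectivity and surjectivity of restriction to $V$, and the extension of both part-fixing and part-swapping automorphisms all check out (given $V_0,V_1,W_0,W_1$ all non-empty, which you arrange). The one ingredient you leave as a sketch, the existence of $2^{\aleph_0}$ non-isomorphic connected bipartite graphs that are rigid except for a part-swapping involution, does go through: two mirror-image copies of $\Lambda_S$ bridged by an edge between the two copies of $\ell_0$ give a two-way infinite path with pendants attached along a set of positions symmetric about the bridge, and for suitable $S$ the only symmetry of that set is the central reflection. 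So your argument is longer than the paper's but, unlike the paper's two-line version of this inclusion, it actually realises the full group $\Aut\Gamma$ rather than its part-fixing subgroup.
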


\begin{proof}
  Let $\Gamma$~be a countable graph.  Choose $S \subseteq \N \setminus
  \{0,1\}$ such that the graph~$L_{S}$, as in Section~\ref{sec:graph}
  is not isomorphic to any connected component of~$\Gamma$.  Let
  $\Delta_{S} = (\Gamma \disjunion L_{S})^{\dagger}$, so that $\Aut
  \Delta_{S} \cong \Aut \Gamma$ (see Theorem~\ref{thm:ac-graphs}).
  Then each vertex~$v$ of~$\Delta_{S}$ has infinite degree and so,
  taking $\Lambda = (\Delta_{S})'$ as defined above, we conclude from
  Lemma~\ref{lem:Aut-Gamma'} that $\Aut \Lambda \cong \Aut \Delta_{S}
  \cong \Aut \Gamma$.

  Conversely, if $\Gamma = (V,E,P)$~is a bipartite graph, choose $S
  \subseteq \N \setminus \{0,1\}$ such that $L_{S}$~is not isomorphic
  to any connected component of the graph~$(V,E)$.  Take
  $\Delta_{S} = (W,F,Q)$~to be the bipartite graph $(\Gamma
  \disjsqunion \Lambda_{S})^{\ddagger}$.  Then $(W,F)$~is a connected
  graph and so, by Lemma~\ref{lem:autom-w/o-bp}, $\Aut (W,F) = \Aut
  \Delta_{S} \cong \Aut \Gamma$.
\end{proof}

The definition of algebraic closure for bipartite graphs is soon
established to be equivalent to the following.  A bipartite graph
$\Gamma = (V,E,P)$ is \emph{algebraically closed} if, given a finite
collection of vertices $\{ v_{1}$,~$v_{2}$, \dots,~$v_{k} \}$ such
that $(v_{i},v_{j}) \in P$ for all $i$~and~$j$, there exists some
vertex~$w$ that is connected by an edge to each of the~$v_{i}$.
Equivalently, if $V = V_{0} \disjunion V_{1}$ is the partition of the
vertices determined by the relation~$P$, then if given finite subsets
$A_{0} \subseteq V_{0}$ and $A_{1} \subseteq V_{1}$, there exist
vertices $w_{0} \in V_{0}$ and $w_{1} \in V_{1}$ such that $w_{0}$~is
joined by an edge for each vertex in~$A_{1}$ and $w_{1}$~is joined by
an edge to each vertex in~$A_{0}$.

There are some potentially unexpected consequences of this
observation.  The first is that, for $m,n \in \N$, the complete
bipartite graph~$K_{m,n}$ on two parts of cardinalities $m$~and~$n$,
respectively, is algebraically closed.  There are also examples of
algebraically closed infinite bipartite graphs but with only one
witness~$w$ of the algebraic closure condition.  This stands in
contrast to the observation that in both algebraically closed graphs
and algebraically closed directed graphs, there are always infinitely
many witnesses.  Accordingly, we define a bipartite graph~$\Gamma =
(V,E,P)$ to be \emph{strongly algebraically closed} if for every
finite collection~$A$ of vertices satisfying $(v,w) \in P$ for all
distinct $v,w \in V$, there exist infinitely many vertices~$x$
connected to every vertex of~$A$ by an edge.  Our analogue of
Theorem~\ref{thm:ac-graphs} for bipartite graphs makes use of this
stronger condition.

\begin{thm}
  \label{thm:ac-bipartite}
  Let\/ $\Gamma$~be a countable graph.  Then there exist
  $2^{\aleph_{0}}$ pairwise non-isomorphic strongly algebraically
  closed bipartite graphs whose automorphism group is isomorphic to
  that of\/~$\Gamma$.
\end{thm}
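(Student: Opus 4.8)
The plan is to carry out the proof of Theorem~\ref{thm:ac-graphs} inside the class of bipartite graphs, using the subdivision bipartite graph~$\Gamma'$, the bipartite disjoint union~$\disjsqunion$, the bipartite complement, the bipartite graphs~$\Lambda_{S}$, and the bipartite analogues of Lemmas~\ref{lem:complement} and~\ref{lem:L-props} recorded above. One preliminary adjustment is forced on us: Lemma~\ref{lem:Aut-Gamma'} requires every vertex degree to be at least~$3$, so first I would replace~$\Gamma$ by a graph with the same automorphism group in which every vertex has infinite degree. Fixing $T_{0} \subseteq \N \setminus \{0,1\}$ with $L_{T_{0}}$ isomorphic to no connected component of~$\Gamma$ and setting $\hat{\Gamma} = (\Gamma \disjunion L_{T_{0}})^{\dagger}$, the argument of Theorem~\ref{thm:ac-graphs} gives $\Aut \hat{\Gamma} \cong \Aut \Gamma$, while $\hat{\Gamma}$ is the complement of a graph whose $L_{T_{0}}$-part is locally finite and bears no edges to its $\Gamma$-part, so every vertex of~$\hat{\Gamma}$ has infinite degree. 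Passing to the bipartite graph $\hat{\Gamma}' = (W,F,P)$, Lemma~\ref{lem:Aut-Gamma'} then gives $\Aut \hat{\Gamma}' \cong \Aut \Gamma$ with every automorphism of~$\hat{\Gamma}'$ part-fixing.

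For each $S \subseteq \N \setminus \{0,1\}$ such that $L_{S}$ is isomorphic to no connected component of~$(W,F)$ --- of which there are $2^{\aleph_{0}}$, since $(W,F)$ has only countably many components and the $L_{S}$ are pairwise non-isomorphic by Lemma~\ref{lem:L-props}(iii) --- I would set
\[
\Delta_{S} = (\hat{\Gamma}' \disjsqunion \Lambda_{S})^{\ddagger}.
\]
Strong algebraic closure of~$\Delta_{S}$ is the bipartite counterpart of Lemma~\ref{lem:ac-construct}: given a finite set~$A$ of vertices all lying in the part containing one part of~$\Lambda_{S}$, only finitely many vertices of the opposite part of~$\Lambda_{S}$ are adjacent in~$\hat{\Gamma}' \disjsqunion \Lambda_{S}$ to a member of~$A$ (as $\Lambda_{S}$ is locally finite and disjoint from~$\hat{\Gamma}'$), and since each part of~$\Lambda_{S}$ is infinite, infinitely many such vertices are then adjacent in~$\Delta_{S}$ to every member of~$A$.

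For the automorphism group, the bipartite analogue of Lemma~\ref{lem:complement}(i) gives $\Aut \Delta_{S} = \Aut(\hat{\Gamma}' \disjsqunion \Lambda_{S})$, and I would argue this is $\cong \Aut \hat{\Gamma}'$. Since $\Lambda_{S}$ is connected and not isomorphic to any component of~$\hat{\Gamma}'$, any automorphism~$f$ of the disjoint union fixes~$\Lambda_{S}$ setwise and permutes the components of~$\hat{\Gamma}'$; as $\Aut \Lambda_{S} = \1$, $f$ restricts to the identity on~$\Lambda_{S}$, so it cannot interchange the two (nonempty) parts of~$\hat{\Gamma}' \disjsqunion \Lambda_{S}$, whence $f|_{\hat{\Gamma}'}$ is a part-fixing automorphism of~$\hat{\Gamma}'$; conversely, each automorphism of~$\hat{\Gamma}'$ is part-fixing by Lemma~\ref{lem:Aut-Gamma'} and so extends by the identity on~$\Lambda_{S}$. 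Thus $\Aut \Delta_{S} \cong \Aut \hat{\Gamma}' \cong \Aut \Gamma$. Finally, for distinct good~$S$ and~$T$, the bipartite analogue of Lemma~\ref{lem:complement}(ii) reduces $\Delta_{S} \cong \Delta_{T}$ to $\hat{\Gamma}' \disjsqunion \Lambda_{S} \cong \hat{\Gamma}' \disjsqunion \Lambda_{T}$; comparing connected components (none of those of~$\hat{\Gamma}'$ being isomorphic to~$\Lambda_{S}$) forces $\Lambda_{S} \cong \Lambda_{T}$, hence $S = T$, so the~$\Delta_{S}$ are pairwise non-isomorphic.

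I expect the main obstacle to be bipartite bookkeeping rather than a new idea: chiefly, verifying the \emph{strong} algebraic closure condition, which is exactly where one uses that each part of~$\Lambda_{S}$ is infinite and that~$\Lambda_{S}$ is locally finite, and ruling out automorphisms of~$\hat{\Gamma}' \disjsqunion \Lambda_{S}$ that swap its two parts, which relies on there being no part-swapping automorphism of~$\Lambda_{S}$ (that is, $\Aut \Lambda_{S} = \1$) together with the part-fixing property of~$\Aut \hat{\Gamma}'$ given by Lemma~\ref{lem:Aut-Gamma'}.
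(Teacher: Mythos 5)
Your proposal is correct and takes essentially the same route as the paper: the paper invokes Theorem~\ref{thm:bipartite-automs} to produce a bipartite graph $\Delta$ with $\Aut\Delta \cong \Aut\Gamma$ (which is precisely your $\hat{\Gamma}'$, built in exactly the way you describe) and then sets $\Pi_{S} = (\Delta \disjsqunion \Lambda_{S})^{\ddagger}$, verifying strong algebraic closure via the infinitely many witnesses in~$\Lambda_{S}$ and concluding $\Aut\Pi_{S} \cong \Aut\Delta \times \Aut\Lambda_{S} \cong \Aut\Gamma$ with $\Pi_{S} \cong \Pi_{T}$ iff $S = T$. Your write-up simply unfolds that citation and supplies more detail than the paper on ruling out part-swapping automorphisms of the disjoint union.
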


\begin{proof}
  By Theorem~\ref{thm:bipartite-automs}, there is a countable
  bipartite graph~$\Delta$ such that $\Aut \Delta \cong \Aut \Gamma$.
  Now there are $2^{\aleph_{0}}$~choices for subsets~$S$ of~$\N
  \setminus \{0,1\}$ such that $\Lambda_{S}$~is not isomorphic to any
  connected component of~$\Delta$.  Take $\Pi_{S} = (\Delta
  \disjsqunion \Lambda_{S})^{\ddagger}$.  Then, by construction, $\Aut
  \Pi_{S} \cong \Aut \Delta \times \Aut \Lambda_{S} \cong \Aut
  \Gamma$, while $\Pi_{S} \cong \Pi_{T}$ if and only if $S = T$.
  Moreover, each~$\Pi_{S}$ is strongly algebraically closed: given any
  finite subset~$A$ of vertices of~$\Pi_{S}$ that are related under
  the partition relation, there are in fact infinitely many vertices
  in~$\Lambda_{S}$ that are joined to every vertex in~$A$.
\end{proof}

The standard method to construct a copy of the countable universal
bipartite graph is as follows.  If $\Gamma = (V,E,P)$ is any countable
bipartite graph with corresponding vertex partition $V = V_{0}
\disjunion V_{1}$, enumerate the set of all finite subsets of~$V_{0}$
as~$(A_{i})_{i \in I}$ and all finite subsets of~$V_{1}$
as~$(B_{j})_{j \in J}$ for some $I,J \subseteq \N$.  Set $W_{0} =
V_{0} \cup \set{v_{j}}{j \in J}$ and $W_{1} = V_{1} \cup \set{w_{i}}{i
  \in I}$, where the $v_{j}$~and~$w_{i}$ are new vertices.  Define
$\mathcal{I}(\Gamma)$ to be the bipartite graph with vertex set~$W_{0}
\cup W_{1}$, edge set to consist of~$E$ and new edges joining
each~$v_{j}$ to every element of~$B_{j}$ (for $j \in J$) and
each~$w_{i}$ to every element of~$A_{i}$ (for $i \in I$), and
partition relation
\[
Q = (W_{0} \times W_{0}) \cup (W_{1} \times W_{1}).
\]
As in the previous sections, we construct a sequence of bipartite
graphs by setting $\Gamma_{0} = \Gamma$ and $\Gamma_{n+1} =
\mathcal{I}(\Gamma_{n})$.  Then the limit~$\Gamma_{\infty}$ of this
sequence is existentially closed and therefore isomorphic to the
countable universal bipartite graph~$B$.

Many of our arguments transfer from Section~\ref{sec:graph} but
throughout one needs to be careful when, for example, finite
algebraically closed bipartite graphs arise.  The first example of
this occurs in our analogue of Lemma~\ref{lem:map-extend} for
bipartite graphs.

\begin{lemma}
  \label{lem:bipartite-extend}
  Let $\Gamma$~be a countable bipartite graph, let
  $\Gamma_{\infty}$~be the copy of the countable universal bipartite
  graph constructed around~$\Gamma$ as described above, and let $f
  \colon \Gamma \to \Gamma_{\infty}$ be a homomorphism of bipartite
  graphs.
  \begin{enumerate}
  \item There exist $2^{\aleph_{0}}$~endomorphisms $\hat{f} \colon
    \Gamma_{\infty} \to \Gamma_{\infty}$ such that the restriction
    of~$\hat{f}$ to~$\Gamma$ equals~$f$.
  \item If\/ $\im f$~is an algebraically closed bipartite graph, then
    there is an extension~$\hat{f}$ of~$f$ with $\im \hat{f} = \im
    f$.  Moreover, if\/ $\im f$~is not isomorphic to~$K_{1,1}$, the
    countable universal bipartite graph~$\Gamma_{\infty}$ may be
    constructed so that there are $2^{\aleph_{0}}$ such
    extensions~$\hat{f}$.
  \item If $f$~is an automorphism of~$\Gamma$, then there is an
    automorphism~$\hat{f}$ of~$\Gamma_{\infty}$ such that the
    restriction of~$\hat{f}$ to~$\Gamma$ equals~$f$.
  \end{enumerate}
\end{lemma}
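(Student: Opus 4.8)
The plan is to transcribe the proof of Lemma~\ref{lem:map-extend}, replacing the operator $\mathcal{G}$ by $\mathcal{I}$ and the (graph) algebraic-closure arguments by their bipartite analogues, and to isolate the single place where the peculiarities of bipartite graphs force additional care.

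First I would describe the step-by-step extension. Suppose a homomorphism $f_{n}$ of bipartite graphs has been defined on the subgraph of $\mathcal{I}(\Gamma)$ consisting of the vertices of $\Gamma$ together with finitely many of the newly adjoined vertices, and that $f_{n}$ agrees with $f$ on $\Gamma$. A newly adjoined vertex of $\mathcal{I}(\Gamma)$ lies in exactly one of the two parts and is joined by an edge only to a finite set of vertices of $\Gamma$ lying in the opposite part. Since $f$ sends each part of $\Gamma$ into a single part of $\Gamma_{\infty}$ and $\Gamma_{\infty}$ is existentially closed, there is a vertex of $\Gamma_{\infty}$ in the required part that is joined to the $f_{n}$-image of that finite set, and indeed there are infinitely many such vertices. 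Extending over all the new vertices and then iterating through the stages $\Gamma_{n+1} = \mathcal{I}(\Gamma_{n})$ yields an endomorphism $\hat{f}$ of $\Gamma_{\infty}$ restricting to $f$ on $\Gamma$, and the infinitely many choices available at each stage produce $2^{\aleph_{0}}$ such $\hat{f}$; this gives~(i). For~(iii) I would use the variant construction: an automorphism $f$ of $\Gamma$ permutes the finite subsets of the two parts of $\Gamma$ (a part-fixing $f$ permutes the finite subsets within each part, while an $f$ interchanging the parts carries finite subsets of one part to finite subsets of the other), hence induces a permutation of the adjoined vertices; letting $\hat{f}$ act on them accordingly gives an automorphism of $\mathcal{I}(\Gamma)$, and iterating gives an automorphism of $\Gamma_{\infty}$ extending $f$.

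For the first assertion of~(ii) I would rerun the extension, but at each stage choose the witness \emph{inside} $\im f$: when $\im f$ is algebraically closed, the bipartite algebraic-closure condition supplies, for every finite subset of one part of $\im f$, a vertex of the opposite part of $\im f$ joined to all of it. Taking such witnesses keeps $\im f_{n} = \im f$ throughout, so $\im \hat{f} = \im f$; here only a single extension is guaranteed because algebraic closure need not provide more than one witness. The genuinely new point is the final clause of~(ii). I would argue that $\im f \not\cong K_{1,1}$ forces one part, say $Q$, of $\im f$ to contain at least two vertices $q$ and $q'$, and that the part of $\Gamma_{\infty}$ which $\hat{f}$ must map into $Q$ contains infinitely many adjoined vertices joined to the empty set (which the statement explicitly permits one to arrange when constructing $\Gamma_{\infty}$). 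Each such vertex may be sent by $\hat{f}$ to either $q$ or $q'$: the adjacency condition forced on it is vacuous at the moment it is processed, and because $\im f$ remains algebraically closed the choice is compatible with all later requirements. Letting these independent binary choices range over the infinitely many such vertices produces $2^{\aleph_{0}}$ distinct extensions, each still with image exactly $\im f$.

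I expect this last step to be the main obstacle: everything else is a direct transcription of Lemma~\ref{lem:map-extend}, but one must pinpoint a source of $2^{\aleph_{0}}$ extensions that survives the possible scarcity of witnesses in $\im f$, and check that $K_{1,1}$ is really the only case where no such source exists (the two-element part $Q$ being precisely what is absent there).
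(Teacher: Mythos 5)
Your proposal is correct, and for parts (i), (iii) and the first assertion of (ii) it follows the paper's proof essentially verbatim (iterate the extension through $\Gamma_{n+1}=\mathcal{I}(\Gamma_{n})$, pick witnesses inside $\im f$ when it is algebraically closed, and handle automorphisms by permuting the adjoined vertices). The one place where you genuinely diverge is the final clause of (ii), which is also the only genuinely new step of this lemma. The paper's device is to observe that if $\im f\not\cong K_{1,1}$ then algebraic closure yields a vertex $x=yf$ with at least two neighbours inside $\im f$, and to rig the enumeration at every stage so that $A_{1}=\{y\}$; the new vertex attached to $\{y\}$ then has at least two admissible images. Your device is instead to use the new vertices attached to the empty set, which carry no adjacency constraint and must land in a part of $\im f$ that (since $\im f\not\cong K_{1,1}$ and is algebraically closed) you can take to contain two vertices $q,q'$; the independent binary choices at each stage again give $2^{\aleph_{0}}$ extensions. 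Both devices are sound and both exploit the same freedom in constructing $\Gamma_{\infty}$ that the statement explicitly allows; yours has the mild advantage of not needing to single out a preimage $y$ of a two-valent vertex, while the paper's localises the branching at a vertex with a nonempty constraint set, which makes it slightly more transparent that the resulting maps still have image exactly $\im f$. Either way one must verify, as you do, that later witnesses remain available because $\im f$ stays algebraically closed, and that $K_{1,1}$ is precisely the case in which no two-element part (equivalently, no vertex with two neighbours in the image) exists.
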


\begin{proof}
  For (i)~and~(ii), the argument is essentially the same as in
  Lemma~\ref{lem:map-extend}: having defined an extension~$f_{n}$
  of~$f$ to~$\Gamma_{n}$, we extend to $f_{n+1} \colon \Gamma_{n+1}
  \to \Gamma_{\infty}$, where $\Gamma_{n+1} = \mathcal{I}(\Gamma_{n})$
  by mapping each new vertex~$w_{n}$ to a vertex joined to every
  vertex of~$B_{n}f_{n}$ and each~$v_{n}$ to a vertex joined to every
  vertex of~$A_{n}f_{n}$.  In~$\Gamma_{\infty}$, there are always
  infinitely many choices and hence in the end we obtain
  $2^{\aleph_{0}}$~extensions~$\hat{f}$ in~(i).

  The only place where extra care is required in the argument is in
  part~(ii) since, although we can at every stage arrange that $\im
  f_{n} = \im f$, we can no longer guarantee there are infinitely many
  such extensions.  If $\im f$~is not isomorphic to~$K_{1,1}$,  there
  is some vertex~$x$ in~$\im f$ that is joined to at least two
  vertices in this image.  Suppose $x = yf$.  At each stage, when
  constructing~$\mathcal{I}(\Gamma_{n})$ we enumerate the finite
  subsets as above by always choosing $A_{1} = \{y\}$.  This ensures
  that there are at least two choices for the image of the new
  vertex~$w_{1}$, namely any of the vertices joined to~$x$.  We then
  continue as before.  We conclude that there are at least two
  extensions of the endomorphism~$f_{n}$ of~$\Gamma_{n}$ to
  endomorphisms~$f_{n+1}$ of~$\Gamma_{n+1}$ preserving the image being
  equal to~$\im f$.  Hence there are
  $2^{\aleph_{0}}$~extensions~$\hat{f}$ of~$f$ to an endomorphism
  of~$\Gamma_{\infty}$ with $\im \hat{f} = \im f$.

  Part~(iii) is proved exactly as in the case for graphs.
\end{proof}

Relying on the above lemma, but otherwise proceeding in exactly the
same way as in Section~\ref{sec:graph}, we establish parts
(i)~and~(ii) of the result that classifies images of idempotent
endomorphisms of the countable universal bipartite graph.

\begin{thm}
  \label{thm:bipartite-idemp}
  Let $\Gamma$~be a countable bipartite graph.  Then
  \begin{enumerate}
  \item there exists an idempotent endomorphism~$f$ of the countable
    universal bipartite graph~$B$ such that $\im f \cong \Gamma$ if
    and only if\/ $\Gamma$~is algebraically closed;
  \item if\/ $\Gamma$~is algebraically closed and is not isomorphic
    to the finite complete bipartite graph~$K_{1,1}$, then there are
    $2^{\aleph_{0}}$~idempotent endomorphisms~$f$ of~$B$ such that
    $\im f \cong \Gamma$;
  \item if\/ $\Gamma \cong K_{1,1}$, there are $\aleph_{0}$~idempotent
    endomorphisms~$f$ of~$B$ such that $\im f \cong \Gamma$.
  \end{enumerate}
\end{thm}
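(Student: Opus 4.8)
The plan is to run the proof of Theorem~\ref{thm:graph-idemp} essentially verbatim, with Lemma~\ref{lem:bipartite-extend} playing the role of Lemma~\ref{lem:map-extend}, and to treat the complete bipartite graph~$K_{1,1}$ as a genuinely separate case since there the extension lemma produces only one extension rather than $2^{\aleph_0}$. First I would dispatch the forward implication in~(i): an existentially closed bipartite graph is in particular algebraically closed, and, exactly as for graphs, this property passes to the image of an endomorphism. Concretely, if $f$ is idempotent and $A$ is a finite set of pairwise $P$-related vertices of $\im f$, then algebraic closure of $B$ supplies a vertex $w$ adjacent to every vertex of $A$, and $wf$ then lies in $\im f$ and remains adjacent to all of $A$ (since $f$ fixes $\im f$ pointwise), so $\im f$ is algebraically closed.

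For the converse in~(i) together with~(ii), suppose $\Gamma$ is algebraically closed. I would build a copy $\Gamma_{\infty}$ of the countable universal bipartite graph around $\Gamma$, taking $\Gamma_{0} = \Gamma$ in the standard construction, and apply Lemma~\ref{lem:bipartite-extend}(ii) to the identity map $\Gamma \to \Gamma$. This yields an endomorphism $\hat{f}$ of $\Gamma_{\infty}$ that restricts to the identity on $\Gamma$ and has $\im \hat{f} = \Gamma$; since $\hat{f}$ fixes its image pointwise it is idempotent, so, identifying $\Gamma_{\infty}$ with $B$, we obtain an idempotent endomorphism of $B$ with image isomorphic to $\Gamma$, establishing~(i). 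When moreover $\Gamma \not\cong K_{1,1}$, the second assertion of Lemma~\ref{lem:bipartite-extend}(ii) gives a copy of $B$ carrying $2^{\aleph_{0}}$ such extensions, hence $2^{\aleph_{0}}$ distinct idempotents of $B$ with image isomorphic to $\Gamma$; there are at most $2^{\aleph_{0}}$ of these since $B$ is countable, so~(ii) follows. (As the number of idempotents with a prescribed image-isomorphism-type is an invariant of $B$, it does not matter that a particular copy was used.)

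For~(iii), I would analyse idempotents $e$ of $B$ with $\im e \cong K_{1,1}$ directly. Write $V = W_{0} \disjunion W_{1}$ for the vertex partition of $B$. Then $Ve$ consists of two adjacent vertices, necessarily one in each part, say $a \in W_{0}$ and $b \in W_{1}$ with $(a,b) \in E$; since $e$ fixes $Ve$ pointwise, $ea = a$, and because $e$ maps $W_{0}$ into a single part of $B$ (which then contains $a$) and into $Ve$, we get $e(W_{0}) = \{a\}$ and likewise $e(W_{1}) = \{b\}$. Conversely, every edge $(a,b)$ of $B$ with $a \in W_{0}$ and $b \in W_{1}$ determines such an idempotent, namely the map collapsing $W_{0}$ onto $a$ and $W_{1}$ onto $b$; a routine check shows this respects $E$ and $P$, is idempotent, and has image $K_{1,1}$. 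Thus these idempotents are in bijection with the edge set of $B$, and since $B$ is existentially closed every vertex has infinitely many neighbours, so $B$ has exactly $\aleph_{0}$ edges and there are exactly $\aleph_{0}$ such idempotents.

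I expect the only real work to lie in part~(iii): checking that a collapsing map is genuinely an endomorphism of the bipartite structure (and idempotent with image precisely $K_{1,1}$) and that no idempotent with image $\cong K_{1,1}$ escapes the description above. Parts~(i) and~(ii) should be essentially mechanical once Lemma~\ref{lem:bipartite-extend} is available; the one point that warrants a sentence is the transfer, noted above, of the ``$2^{\aleph_{0}}$ extensions'' conclusion from a specially constructed copy of $B$ to $B$ itself, and the parallel observation (needed for the implicit upper bound in~(ii)) that a countable structure has at most $2^{\aleph_{0}}$ endomorphisms.
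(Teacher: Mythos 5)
Your proposal is correct and follows the paper's own route: parts (i) and (ii) are obtained by running the proof of Theorem~\ref{thm:graph-idemp} with Lemma~\ref{lem:bipartite-extend} in place of Lemma~\ref{lem:map-extend}, and part (iii) by identifying the idempotents with image $K_{1,1}$ bijectively with the $\aleph_{0}$ edges of~$B$, exactly as the paper does. Your extra remarks (the transfer of the count between copies of $B$ and the upper bound $2^{\aleph_{0}}$) are sound points the paper leaves implicit.
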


\begin{proof}
  (iii)~If $V = V_{0} \disjunion V_{1}$ is the partition of the vertex
  set associated to the partition relation on~$\Gamma$, then
  idempotent endomorphisms of~$f$ with $\im f \cong K_{1,1}$ are
  determined by choosing any edge~$(u,v)$ present in~$E$ where $u \in
  V_{0}$ and $v \in V_{1}$, and then mapping vertices in~$V_{0}$
  to~$u$ and vertices in~$V_{1}$ to~$v$.  As $B$~has $\aleph_{0}$~many
  edges, there are $\aleph_{0}$~such idempotent endomorphisms.
\end{proof}

By following the same steps as used to establish
Theorem~\ref{thm:R-reg-classes}, we can then establish parts
(i)~and~(ii) of the following.  We rely on
Theorem~\ref{thm:ac-bipartite} and note that strongly algebraically
closed bipartite graphs are certainly not isomorphic to~$K_{1,1}$.
For the final part, two idempotent endomorphisms of~$B$ with image
isomorphic to~$K_{1,1}$ are $\scD$\nbd related by
Lemma~\ref{lem:reg-classes} and each of the $\aleph_{0}$~such
idempotent endomorphisms determines a group $\scH$\nbd class.

\begin{thm}
  \label{thm:B-reg-classes}
  Let $B$~denote the countable universal bipartite graph.
  \begin{enumerate}
  \item Let\/ $\Gamma$~be a countable graph.  Then there exist
    $2^{\aleph_{0}}$~distinct regular $\scD$\nbd classes of\/~$\End B$
    whose group $\scH$\nbd classes are isomorphic to~$\Aut \Gamma$.
  \item Let $f$~be an idempotent endomorphism of~$B$ whose image is
    not isomorphic to the finite complete bipartite graph~$K_{1,1}$.
    Then the $\scD$\nbd class of~$f$ in~$\End B$ contains
    $2^{\aleph_{0}}$~distinct group $\scH$\nbd classes.
  \item There is a single $\scD$\nbd class of~$B$ containing the
    idempotent endomorphisms with image isomorphic to~$K_{1,1}$.  This
    $\scD$\nbd class contains precisely $\aleph_{0}$~group $\scH$\nbd
    classes, each of which is isomorphic to the cyclic group of
    order~$2$. \qed
  \end{enumerate}
\end{thm}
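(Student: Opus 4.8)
The plan is to imitate the proof of Theorem~\ref{thm:R-reg-classes}, feeding in the bipartite versions of each ingredient, and then to analyse the exceptional $K_{1,1}$\nbd class by hand. For part~(i), I would invoke Theorem~\ref{thm:ac-bipartite} to obtain $2^{\aleph_{0}}$~pairwise non-isomorphic strongly algebraically closed bipartite graphs~$\Delta$ with $\Aut \Delta \cong \Aut \Gamma$. Each such~$\Delta$ is in particular algebraically closed and, having infinitely many witnesses, is not isomorphic to~$K_{1,1}$; so Theorem~\ref{thm:bipartite-idemp}(i) yields an idempotent endomorphism~$f_{\Delta}$ of~$B$ with $\im f_{\Delta} \cong \Delta$. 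By Proposition~\ref{prop:Schutzgps}(iii), $H_{f_{\Delta}} \cong \Aut(\im f_{\Delta}) \cong \Aut \Gamma$, and by Lemma~\ref{lem:reg-classes}(iii) the idempotents~$f_{\Delta}$ lie in pairwise distinct (regular) $\scD$\nbd classes, since their images are pairwise non-isomorphic. This produces the required $2^{\aleph_{0}}$~classes.

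For part~(ii), fix an idempotent~$f$ of~$B$ with $\Gamma = \im f \not\cong K_{1,1}$, noting that $\Gamma$~is algebraically closed by Theorem~\ref{thm:bipartite-idemp}(i). Theorem~\ref{thm:bipartite-idemp}(ii) then provides $2^{\aleph_{0}}$~idempotent endomorphisms of~$B$ with image isomorphic to~$\Gamma$; all of them are $\scD$\nbd related to~$f$ by Lemma~\ref{lem:reg-classes}(iii), and distinct idempotents necessarily lie in distinct $\scH$\nbd classes (just as in the proof of Theorem~\ref{thm:R-reg-classes}). Hence the $\scD$\nbd class of~$f$ contains $2^{\aleph_{0}}$~group $\scH$\nbd classes.

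For part~(iii), I would first pin down every idempotent endomorphism~$e$ of~$B$ with $\im e \cong K_{1,1}$. By Proposition~\ref{prop:reg-image}, $\langle Ve \rangle = \im e$, so $Ve$~is a single edge~$\{u,v\}$ with $u$~and~$v$ in opposite parts, $e$~restricts to the identity on~$\{u,v\}$, and the bipartite structure forces $e$~to collapse each part of the vertex set of~$B$ onto whichever of $u$~and~$v$ lies in that part; conversely, any edge of~$B$ determines such an idempotent in this way. As $B$~has $\aleph_{0}$~edges, there are precisely $\aleph_{0}$~such idempotents, and by Lemma~\ref{lem:reg-classes}(iii) they all lie in a single regular $\scD$\nbd class. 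Every idempotent of that $\scD$\nbd class has image isomorphic to~$K_{1,1}$, again by Lemma~\ref{lem:reg-classes}(iii), so these are all of its idempotents; since distinct idempotents lie in distinct $\scH$\nbd classes, this $\scD$\nbd class contains exactly $\aleph_{0}$~group $\scH$\nbd classes. Finally, each such $\scH$\nbd class~$H_{e}$ is isomorphic to~$\Aut(\im e) \cong \Aut K_{1,1}$ by Proposition~\ref{prop:Schutzgps}(iii), and a direct check shows that $\Aut K_{1,1}$~is the cyclic group of order~$2$, its non-trivial element being the map interchanging the two vertices (and hence the two parts), which is admissible precisely because we permit part-interchanging automorphisms.

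There is no serious obstacle, since the substantive content is already carried by Theorems~\ref{thm:ac-bipartite} and~\ref{thm:bipartite-idemp}. The one place demanding care is the book-keeping in part~(iii): one must be sure that the description of the idempotents with $K_{1,1}$\nbd image is exhaustive, so that no idempotent sitting inside this $\scD$\nbd class is overlooked, and that the non-trivial automorphism of~$K_{1,1}$ genuinely contributes a group of order~$2$ rather than the trivial group — which it does exactly because our automorphisms may swap the parts.
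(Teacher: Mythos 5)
Your proposal is correct and follows essentially the same route as the paper: parts (i) and (ii) are the bipartite translations of the proof of Theorem~\ref{thm:R-reg-classes} via Theorems~\ref{thm:ac-bipartite} and~\ref{thm:bipartite-idemp}, and part~(iii) rests on the count of $K_{1,1}$\nbd image idempotents from Theorem~\ref{thm:bipartite-idemp}(iii) together with Lemma~\ref{lem:reg-classes}. Your explicit classification of those idempotents and the check that $\Aut K_{1,1}$ has order~$2$ simply make explicit what the paper leaves implicit.
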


We now turn to establishing information about $\scL$- and $\scR$\nbd
classes in the endomorphism monoid of~$B$.  We deal first with the
bipartite graph version of Theorem~\ref{thm:graph-nonregLR}, since it
involves fewer changes.  We use essentially the same
construction~$\Gamma^{\sharp}$ as in Section~\ref{sec:graph}.  Let
$\Gamma = (V,E,P)$ be a bipartite graph with $P = (V_{0} \times V_{0})
\cup (V_{1} \times V_{1})$.  Assume that $V_{k} = \set{v_{i}^{(k)}}{i
  \in \N}$ for $k = 0$,~$1$.  Then define $\Gamma^{\sharp} =
(V^{\sharp}, E^{\sharp}, P^{\sharp})$, where $V^{\sharp} =
V_{0}^{\sharp} \cup V_{1}^{\sharp}$,
\begin{align*}
  V_{k}^{\sharp} &= \set{ v_{i,r}^{(k)} }{ i \in \N, \; r \in \{0,1\}
  }, \qquad \text{for $k = 0,1$}, \\
  E^{\sharp} &= \set{ (v_{i,r}^{(k)}, v_{j,s}^{(1-k)}) }{
    (v_{i}^{(k)},v_{j}^{(1-k)}) \in E, \; r,s \in \{0,1\} }, \\
  P^{\sharp} &= (V_{0}^{\sharp} \times V_{0}^{\sharp}) \cup
  (V_{1}^{\sharp} \times V_{1}^{\sharp}).
\end{align*}
The analogue of Lemma~\ref{lem:sharp} holds for this bipartite version
of~$\Gamma^{\sharp}$ and, indeed, if $\Gamma$~is strongly
algebraically closed, then so is~$\Gamma^{\sharp}$.

We now mimic the proof of Theorem~\ref{thm:graph-nonregLR}, using our
bipartite construction $\Gamma^{\sharp} = (V^{\sharp}, E^{\sharp},
P^{\sharp})$.  We need $\Gamma$~to be strongly algebraically closed in
order to be able to apply Corollary~\ref{cor:hom-from-random} to
construct the injective homomorphism $f \colon B \to \Lambda_{0}$ that
appears.  In addition, in the last half of the proof we must take $E =
(W_{0} \cup W_{1}, \emptyset, (W_{0} \times W_{0}) \cup (W_{1} \times
W_{1}) )$ as the empty bipartite graph where $W_{0}$~and~$W_{1}$ are
countably infinite disjoint sets.  This then allows us to establish:

\begin{thm}
  \label{thm:bipartite-nonregLR}
  Let $\Gamma$~be a countable strongly algebraically closed bipartite
  graph that is not isomorphic to the countable universal bipartite
  graph~$B$.  Then there exists a non-regular injective
  endomorphism~$f$ of~$R$ such that the subgraph induced by the images
  of the vertices under~$f$ is isomorphic to~$\Gamma$ and such that
  the $\scD$\nbd class of~$f$ contains $2^{\aleph_{0}}$~$\scR$- and
  $\scL$\nbd classes. \qed
\end{thm}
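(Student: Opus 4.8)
The plan is to transcribe the proof of Theorem~\ref{thm:graph-nonregLR} into the bipartite world, replacing $\Gamma^{\sharp}$, $\disjunion$ and $L_{S}$ by their bipartite counterparts and heeding the two warnings above. First I would form the bipartite $\Gamma^{\sharp} = (V^{\sharp},E^{\sharp},P^{\sharp})$; since $\Gamma$ is strongly algebraically closed so is $\Gamma^{\sharp}$. I would then build a copy $\Gamma_{\infty} \cong B$ of the countable universal bipartite graph starting from $\Gamma_{0} = \Gamma^{\sharp} \disjsqunion B' \disjsqunion E$, where $B'$ is a copy of~$B$ and $E = (W_{0}\disjunion W_{1},\emptyset,(W_{0}\times W_{0})\cup(W_{1}\times W_{1}))$ is the empty bipartite graph with both parts countably infinite, so that all the gadgets needed in the two halves of the argument occur as induced sub-bipartite-graphs of $B$ and can be referred to for the \emph{same} endomorphism $f$. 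By the bipartite analogue of Lemma~\ref{lem:sharp}(ii), the sub-bipartite-graph $\Lambda_{0}$ of $\Gamma^{\sharp}$ induced by $\set{v^{(k)}_{i,0}}{i\in\N,\,k\in\{0,1\}}$ is isomorphic to $\Gamma$, hence not to $B$.

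Because $\Lambda_{0}\cong\Gamma$ is strongly algebraically closed, Corollary~\ref{cor:hom-from-random} supplies a vertex bijection $V(B)\to V(\Lambda_{0})$ realising a homomorphism of bipartite graphs; I call it $f$ and regard it, via $\Lambda_{0}\subseteq B$, as an injective endomorphism of $B$, so that $\langle Vf\rangle = \langle V(\Lambda_{0})\rangle \cong \Gamma$. Since $\Gamma\not\cong B$, some non-edge $(u,v)$ of $B$ has $(uf,vf)\in E^{\sharp}$, whence $\im f \neq \langle Vf\rangle$ and $f$ is not regular by Proposition~\ref{prop:reg-image}. This is the injective non-regular endomorphism demanded by the theorem.

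To see that the $\scD$-class of $f$ meets $2^{\aleph_{0}}$ many $\scL$-classes, I would use the copy-swapping automorphisms: for $\mathbf{b}=(b_{i})_{i\in\N}\in\{0,1\}^{\N}$ set $v^{(k)}_{i,r}\psi_{\mathbf{b}} = v^{(k)}_{i,\,r\oplus b_{i}}$, a part-fixing automorphism of $\Gamma^{\sharp}$; extend it by the identity on $B'\disjsqunion E$ and then, by Lemma~\ref{lem:bipartite-extend}(iii), to an automorphism $\hat{\psi}_{\mathbf{b}}$ of $B$. Then $f\hat{\psi}_{\mathbf{b}}\mathrel{\scR}f$, while $Vf\hat{\psi}_{\mathbf{b}} = \set{v^{(k)}_{i,b_{i}}}{i\in\N,\,k\in\{0,1\}}$ varies injectively with $\mathbf{b}$, so Lemma~\ref{lem:class-facts}(i) separates these endomorphisms into $2^{\aleph_{0}}$ $\scL$-classes. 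For the $2^{\aleph_{0}}$ $\scR$-classes, fix an isomorphism $g\colon B'\to B$ and, for each bipartite homomorphism $h\colon E\to B$, use Lemma~\ref{lem:bipartite-extend}(i) to extend $g\cup h$ to $\xi_{h}\in\End B$; then $g^{-1}\xi_{h}=\mathrm{id}_{B}$ gives $\xi_{h}f\mathrel{\scL}f$, while $\ker\xi_{h}f = \ker\xi_{h}$ because $f$ is injective, so distinct kernels of $h$ give distinct $\scR$-classes by Lemma~\ref{lem:class-facts}(ii). Since $E$ has a countably infinite part, there are $2^{\aleph_{0}}$ possible kernels for $h$ (partition $W_{0}$ arbitrarily and send $W_{1}$ to a single vertex), finishing the proof.

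The routine verifications — the bipartite analogue of Lemma~\ref{lem:sharp}, and that all the maps and extensions above respect edges and the partition relation — go through just as in Sections~\ref{sec:graph} and~\ref{sec:bipartite}. I expect the only real care to be needed at the two points the discussion before the theorem isolates. One must insist on \emph{strong} algebraic closure: ordinary algebraic closure is what Corollary~\ref{cor:hom-from-random} would fail for, and it also admits the finite complete bipartite graphs $K_{m,n}$, which cannot be the induced image of an injective endomorphism of the infinite graph $B$. And the padding graph $E$ must have both parts infinite: a bipartite homomorphism out of $E$ is forced to send each part of $E$ into a single part of $B$, so if one part of $E$ were finite the kernels of such maps would not range over $2^{\aleph_{0}}$ possibilities. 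By contrast, the $K_{1,1}$-type degeneracies that complicate Theorems~\ref{thm:bipartite-idemp} and~\ref{thm:B-reg-classes} do not intrude here, since a strongly algebraically closed bipartite graph is necessarily infinite.
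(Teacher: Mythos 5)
Your proposal is correct and takes essentially the same route as the paper, which obtains this theorem by transplanting the proof of Theorem~\ref{thm:graph-nonregLR} with precisely the two modifications you isolate: strong algebraic closure of $\Gamma$ so that Corollary~\ref{cor:hom-from-random} applies to produce the injective non-regular $f$, and the empty bipartite graph with countably infinite parts for the kernel count. Your packaging of $\Gamma^{\sharp}$, $B'$ and $E$ into a single base graph $\Gamma_{0}$ is a harmless (indeed tidier) variant of the paper's implicit use of two constructions of $B$ for the same $f$; just note that $g\cup h$ must also be assigned values on the $\Gamma^{\sharp}$ component (say via the inclusion into $B$) before Lemma~\ref{lem:bipartite-extend}(i) can be invoked, and that a single infinite part of $E$ would already supply the $2^{\aleph_{0}}$ kernels.
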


The bipartite analogue of Theorem~\ref{thm:graph-regLR} involves some
surprising differences and reflects the fact that there can be finite
algebraically closed bipartite graphs.

\begin{thm}
  \label{thm:bipartite-regLR}
  Let $f$~be a regular endomorphism of the countable universal
  bipartite graph~$B$.
  \begin{enumerate}
  \item If the image of~$f$ is infinite, then the $\scD$\nbd class
    of~$f$ contains $2^{\aleph_{0}}$~many $\scL$- and $\scR$\nbd
    classes.
  \item If the image of~$f$ is finite but not isomorphic to~$K_{1,1}$,
    then the $\scD$\nbd class of~$f$ contains $\aleph_{0}$~many
    $\scL$\nbd classes and $2^{\aleph_{0}}$~many $\scR$\nbd classes.
  \item If\/ $\im f \cong K_{1,1}$, then the $\scD$\nbd class of~$f$
    contains $\aleph_{0}$~many $\scL$\nbd classes and one $\scR$\nbd
    class.
  \end{enumerate}
\end{thm}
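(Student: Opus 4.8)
The plan is to run the three cases in parallel with the proof of Theorem~\ref{thm:graph-regLR}, keeping track of exactly where the possible finiteness of the image changes a count. First I would record that a regular endomorphism $f$ of $B$ satisfies $\im f = \langle Vf\rangle$ (Proposition~\ref{prop:reg-image}) and that this structure is algebraically closed: if $fgf=f$ then $gf$ is idempotent with $\im(gf)=\im f$, and images of idempotents are algebraically closed by Theorem~\ref{thm:bipartite-idemp}(i). Write $\Gamma=\im f$. As in Theorem~\ref{thm:graph-regLR}, I would freely rebuild $B$ around whatever starting bipartite graph is convenient, since $B$ is unique up to isomorphism and, by Lemma~\ref{lem:reg-classes}(iii), $f$ lies in the $\scD$-class of every idempotent whose image is isomorphic to $\Gamma$.

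For the $\scR$-class count: in cases~(i) and~(ii) the graph $\Gamma$ is algebraically closed and not isomorphic to $K_{1,1}$ (being infinite, respectively finite but explicitly excluded), so after constructing $B$ around $\Gamma$, Lemma~\ref{lem:bipartite-extend}(ii) supplies $2^{\aleph_0}$ idempotent extensions of $\mathrm{id}_\Gamma$, all with image exactly $\Gamma$. Two distinct such idempotents have distinct kernels (each acts as the identity on $\Gamma$, so the kernel determines the map), hence lie in distinct $\scR$-classes inside the $\scD$-class of $f$ by Lemma~\ref{lem:reg-classes}(ii); as $\order{\End B}=2^{\aleph_0}$, this is exactly $2^{\aleph_0}$. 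In case~(iii), $\Gamma\cong K_{1,1}$ forces $Vf=\{u,v\}$ to be an edge of $B$ with $u,v$ in opposite parts; since a bipartite-graph homomorphism maps each part of $B$ into a single part, each part of $B$ collapses to a single vertex of $\{u,v\}$, so $\ker f=P$, the partition relation of $B$. Every regular element $\scD$-related to $f$ has image $\cong K_{1,1}$ (Lemma~\ref{lem:class-facts}(iii)) and is regular (\cite[Proposition~2.3.1]{Howie}), so the same computation gives it kernel $P$; thus there is a single $\scR$-class.

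For the $\scL$-class count: in case~(i), $\Gamma$ is infinite and algebraically closed, so I would form the bipartite $\Gamma^{\sharp}$ (which is again algebraically closed, by the bipartite analogue of Lemma~\ref{lem:sharp}), build $B$ around $\Gamma^{\sharp}$, extend $\mathrm{id}_{\Gamma^{\sharp}}$ to an idempotent $g$ with image $\Gamma^{\sharp}$, and for each $\mathbf{b}=(b_i)$ with $b_i\in\{0,1\}$ set $v_{i,r}^{(k)}\phi_{\mathbf{b}}=v_{i,b_i}^{(k)}$; then $g\phi_{\mathbf{b}}$ is an idempotent whose image $\Lambda_{\mathbf{b}}\cong\Gamma$ varies with $\mathbf{b}$, giving $2^{\aleph_0}$ distinct $\scL$-classes in the $\scD$-class of $f$ by Lemma~\ref{lem:reg-classes}(i), exactly as in Theorem~\ref{thm:graph-regLR}. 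In cases~(ii) and~(iii), $\Gamma$ is finite, so any $h$ with $h\mathrel{\scD}f$ has $Vh$ a finite subset of the countable set $V(B)$; there are only $\aleph_0$ such subsets, bounding the number of $\scL$-classes above by $\aleph_0$. For the matching lower bound I would take an idempotent $f$ with $\im f=\Gamma$ and use homogeneity of $B$: any automorphism $\alpha$ of $B$ carrying $\Gamma$ to another induced copy $\Gamma'$ yields the idempotent $\alpha^{-1}f\alpha\in\scD_f$ with image $\Gamma'$; since $B$ contains $\aleph_0$ pairwise disjoint induced copies of $\Gamma$ (when $\Gamma\cong K_{1,1}$, simply $\aleph_0$ edges), these produce $\aleph_0$ distinct $\scL$-classes, so the count is exactly $\aleph_0$.

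The main obstacle I expect is the finite-image bookkeeping rather than any new construction. One must check that the $\sharp$-construction really preserves algebraic closure (not merely the stronger property whose transfer is already noted in the text), that the crude ``$\le\aleph_0$'' bound is actually attained via a homogeneity argument exhibiting infinitely many realised images, and — the genuinely new point compared with the graph case — that in case~(iii) the bipartite structure pins the kernel of any regular endomorphism with image $K_{1,1}$ down to exactly $P$, which is precisely what collapses the $\scR$-class count to one and drives the discrepancy with Theorem~\ref{thm:graph-regLR}.
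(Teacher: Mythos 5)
Your proposal is correct, and for most of the theorem it tracks the paper's proof exactly: the $\scR$-class counts in all three cases (the $2^{\aleph_{0}}$ idempotent extensions of $\mathrm{id}_{\Gamma}$ from Lemma~\ref{lem:bipartite-extend}(ii) with distinct kernels in cases (i) and (ii), and the observation that image $K_{1,1}$ forces the kernel to equal the partition relation~$P$ in case (iii)), the $\Gamma^{\sharp}$ argument for the $\scL$-classes in case (i), and the crude $\aleph_{0}$ upper bound on $\scL$-classes when the image is finite are all exactly the steps the paper takes. The one place you genuinely diverge is the lower bound on the number of $\scL$-classes in the finite-image cases (ii) and (iii). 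The paper introduces a further construction $\Gamma^{\natural}$, replacing each edge of $\Gamma$ by a copy of $K_{\aleph_{0},\aleph_{0}}$, builds $B$ around $\Gamma^{\natural}$, and composes an idempotent with image $\Gamma^{\natural}$ with the projections $\phi_{\mathbf{b}}$ to realise infinitely many distinct induced copies of $\Gamma$ as images of idempotents. You instead fix one idempotent $f$ with induced image $\Gamma$ and conjugate by automorphisms of~$B$: homogeneity moves the finite induced substructure $\Gamma$ onto any of the $\aleph_{0}$ pairwise disjoint induced copies of $\Gamma$ that existential closure provides, and $\alpha^{-1}f\alpha$ is then an idempotent with that copy as image, $\scD$-related to $f$ by Lemma~\ref{lem:reg-classes}(iii) and not $\scL$-related to the others by Lemma~\ref{lem:reg-classes}(i). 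Your route is arguably more elementary (no new blow-up construction, just homogeneity, which is available precisely because the image is finite here), at the cost of needing the small auxiliary facts that the image of a regular endomorphism is an induced substructure (Proposition~\ref{prop:reg-image}) and that $B$ contains infinitely many disjoint induced copies of any finite bipartite graph; the paper's $\Gamma^{\natural}$ device has the advantage of being uniform with the $\Gamma^{\sharp}$ argument already used in case (i) and in Theorem~\ref{thm:graph-regLR}. Both yield exactly $\aleph_{0}$, matching the upper bound.
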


\begin{proof}
  Suppose that $B = (V,E,P)$, where $V = V_{0} \disjunion V_{1}$ is
  the partition of the vertices determined by the relation~$P$.  Let
  $\Gamma = \im f$.  For the $\scR$\nbd classes in the $\scD$\nbd
  class~$D_{f}$ of~$f$, we can, for (i)~and~(ii), argue exactly as in
  Theorem~\ref{thm:graph-regLR}: build~$B$ as~$\Gamma_{\infty}$
  around~$\Gamma$ as described above and use
  Lemma~\ref{lem:bipartite-extend}(ii) to extend the identity map
  on~$\Gamma$ to $2^{\aleph_{0}}$~idempotent endomorphisms of~$B$ with
  image~$\Gamma$.  Each such extension is $\scD$\nbd related to~$f$
  but they have distinct kernels and so are not $\scR$\nbd related to
  each other by Lemma~\ref{lem:reg-classes}.

  For the $\scR$\nbd classes in Case~(iii), note that when $\Gamma
  \cong K_{1,1}$, the endomorphisms in~$D_{f}$ map all the vertices
  in~$V_{0}$ to some fixed vertex~$v$ and all the vertices in~$V_{1}$
  to some fixed vertex~$w$ joined to~$v$ (and necessarily $v$~and~$w$
  lie in different parts of the partition).  Thus the kernel of such
  an endomorphism equals the partition relation~$P$ and we conclude
  that all endomorphisms in~$D_{f}$ are $\scR$\nbd related by
  Lemma~\ref{lem:reg-classes}(ii).

  When $\Gamma$~is infinite (i.e., Case~(i)), we build a copy of~$B$
  around the bipartite graph~$\Gamma^{\sharp}$ and use the same
  argument as in Theorem~\ref{thm:graph-regLR} to show that
  $D_{f}$~contains $2^{\aleph_{0}}$~many $\scL$\nbd classes.  When
  $\Gamma$~is finite (i.e., Cases~(ii) and~(iii)), write $n =
  \order{Vf}$.  If $g$~is $\scD$\nbd related to~$f$, then $\im g \cong
  \im f$ and so $\order{Vg} = n$.  There are $\aleph_{0}$~many subsets
  of~$V$ of cardinality~$n$ and so at most $\aleph_{0}$~many
  $\scL$\nbd classes in~$D_{f}$ by Lemma~\ref{lem:reg-classes}(i).
  However, we can construct infinitely many non-$\scL$\nbd related
  endomorphisms by adjusting the construction~$\Gamma^{\sharp}$ as
  follows.

  Assume $\Gamma = (W,F,Q)$ with associated partition $W = W_{0}
  \disjunion W_{1}$ of its vertices.  Write $W_{k} =
  \set{w_{i}^{(k)}}{i \in I_{k}}$ for finite subsets
  $I_{0}$~and~$I_{1}$ of~$\N$.  Define $\Gamma^{\natural} =
  (W^{\natural}, F^{\natural}, Q^{\natural})$, where $W^{\natural}
  = W_{0}^{\natural} \cup W_{1}^{\natural}$,
  \begin{align*}
    W_{k}^{\natural} &= \set{ w_{i,r}^{(k)} }{ i \in I_{k}, \; r \in
      \N }, \qquad \text{for $k = 0,1$,} \\
    F^{\natural} &= \set{ (w_{i,r}^{(k)},w_{j,s}^{(1-k)}) }{
      (v_{i}^{(k)},v_{j}^{(1-k)}) \in F, \; r,s \in \N }, \\
    Q^{\natural} &= (W_{0}^{\natural} \times W_{0}^{\natural}) \cup
    (W_{1}^{\natural} \times W_{1}^{\natural}).
  \end{align*}
  Thus we are now in effect replacing each edge in~$\Gamma$ by a copy
  of the infinite complete bipartite
  graph~$K_{\aleph_{0},\aleph_{0}}$.  The remainder of the argument is
  similar to Theorem~\ref{thm:graph-regLR}.  We build a copy of~$B$
  around~$\Gamma^{\natural}$ and extend the identity map
  on~$\Gamma^{\natural}$ to an idempotent endomorphism~$g$ of~$B$ with
  image~$\Gamma^{\natural}$.  For any~$\mathbf{b} = (b_{i}^{(k)})$
  with $b_{i}^{(k)} \in \N$ for each $i \in I_{k}$, we define an
  endomorphism~$\phi_{\mathbf{b}} \colon \Gamma^{\natural} \to
  \Gamma^{\natural}$ by $w^{(k)}_{i,r}\phi_{\mathbf{b}} =
  w_{i,b_{i}^{(k)}}^{(k)}$.  Then $g\phi_{\mathbf{b}}$~has image
  isomorphic to~$\Gamma$ and so is $\scD$\nbd related to~$f$, but as
  $\mathbf{b}$~varies we obtain infinitely many distinct images and so
  these endomorphisms are not $\scL$\nbd related.  This completes the
  proof.
\end{proof}

We can perform the same arguments for the \Schutz\ groups of
$\scH$\nbd classes of non-regular endomorphisms of the countable
universal bipartite graph $B = (V,E,P)$ as in
Section~\ref{sec:graph}.  If $\Gamma_{0} = (V_{0},E_{0},P_{0})$ is a
strongly algebraically closed bipartite graph, let $F_{0} \subseteq
E_{0}$ be such that $(V_{0},F_{0},P_{0}) \cong B$ (as provided by
Proposition~\ref{prop:characterise-ac}(ii)).  Assume that $B$~has been
constructed using~$\Gamma_{0}$ in the initial step of our method and
let $f \colon B \to B$ be the endomorphism that realises this
isomorphism.  Then we establish:

\begin{prop}
  \label{prop:B-Schutz}
  Let $f$~be an injective endomorphism of the countable universal
  bipartite graph~$B$ of the form specified above and $H = H_{f}$.
  Then $\mathcal{S}_{H} \cong \Aut \langle Vf \rangle \cap \Aut ( \im
  f )$. \qed
\end{prop}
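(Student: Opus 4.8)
The plan is to run, verbatim up to the bipartite dictionary, the argument that established Proposition~\ref{prop:R-Schutz}. Recall the set-up immediately preceding the proposition: $\Gamma_{0} = (V_{0},E_{0},P_{0})$ is a strongly algebraically closed bipartite graph, $F_{0} \subseteq E_{0}$ is chosen via Proposition~\ref{prop:characterise-ac}(ii) so that $(V_{0},F_{0},P_{0}) \cong B$, the copy~$B$ is built around~$\Gamma_{0}$ so that $\Gamma_{0}$ sits inside $B = (V,E,P)$ as the substructure induced by~$V_{0}$, and $f \colon B \to B$ is given by a bijection $V \to V_{0}$ inducing a bijection $E \to F_{0}$. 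Then $\langle Vf \rangle = (V_{0},E_{0},P_{0})$ and $\im f = (V_{0},F_{0},P_{0})$, and $f$~is injective. By Proposition~\ref{prop:Schutzgps}(ii) the map $\phi \colon \gamma_{t} \mapsto t|_{V_{0}}$ is an injective homomorphism from~$\mathcal{S}_{H}$ into $\Aut \langle Vf \rangle \cap \Aut(\im f)$, so the entire content of the proof is to show $\phi$~is onto.

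Accordingly, the main step is to fix an arbitrary bijection $g \colon V_{0} \to V_{0}$ lying in $\Aut \langle Vf \rangle \cap \Aut(\im f)$ and to produce $t \in T_{H}$ with $t|_{V_{0}} = g$. Since $g \in \Aut(\im f)$ and $f$~is injective, Proposition~\ref{prop:Schutzgps}(iv) gives $fg \mathrel{\scL} f$. Since $g$ is also an automorphism of the bipartite graph $\Gamma_{0} = \langle Vf \rangle$, Lemma~\ref{lem:bipartite-extend}(iii) lets us extend~$g$ to an automorphism~$\hat{g}$ of~$\Gamma_{\infty} \cong B$; here we invoke part~(iii), which carries no exceptional case, so the $K_{1,1}$ caveat plays no role. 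Because $\hat{g}$ restricts to~$g$ on $V_{0} = Vf$ we have $fg = f\hat{g}$, whence $(fg)\hat{g}^{-1} = f$, so $fg \mathrel{\scR} f$ and therefore $fg \mathrel{\scH} f$.

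It then remains to promote this to $H\hat{g} \subseteq H$, i.e.\ $\hat{g} \in T_{H}$. For $h \in H$ we have $h \mathrel{\scR} f$, and since $\hat{g}$~is a unit of~$\End B$ we get $h\hat{g}M = hM$, so $h\hat{g} \mathrel{\scR} h$; likewise $h \mathrel{\scL} f$ gives $Mh\hat{g} = Mf\hat{g} = Mfg = Mf = Mh$ using $fg \mathrel{\scL} f$, so $h\hat{g} \mathrel{\scL} h$. Hence $h\hat{g} \mathrel{\scH} h$ and $\hat{g} \in T_{H}$. Finally, as in the graph case, $\gamma_{\hat{g}}\phi = \hat{g}|_{V_{0}} = g$, so $g$ lies in the image of~$\phi$; thus $\phi$~is surjective and $\mathcal{S}_{H} \cong \Aut \langle Vf \rangle \cap \Aut(\im f)$.

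I do not expect a substantial obstacle: the proof is a verification that the earlier argument survives translation. The two places that genuinely need attention, rather than copying, are (a) the use of Proposition~\ref{prop:characterise-ac}(ii) in place of part~(i), which is exactly why the hypothesis is stated for a \emph{strongly} algebraically closed~$\Gamma_{0}$, and (b) the observation that $\langle Vf \rangle$ and $\im f$ both carry the partition relation~$P_{0}$, so that the automorphisms in question are automorphisms of bipartite graphs and Lemma~\ref{lem:bipartite-extend}(iii) indeed applies.
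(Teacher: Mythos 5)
Your proposal is correct and is essentially the paper's own proof: the paper establishes Proposition~\ref{prop:B-Schutz} precisely by rerunning the argument for Proposition~\ref{prop:R-Schutz} with Proposition~\ref{prop:characterise-ac}(ii) and Lemma~\ref{lem:bipartite-extend}(iii) in place of their graph analogues, exactly as you do. Your only addition is to spell out the step showing $\hat{g} \in T_{H}$ (which the paper leaves as ``similarly establish''), and that verification is sound.
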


To complete the work on the \Schutz\ group, we shall need a bipartite
analogue of the graphs~$M_{S}$ appearing in Section~\ref{sec:graph}.
For $S \subseteq \N \setminus \{0,1\}$, recall $L_{S}$~contains
vertices~$\ell_{n}$ (for $n \in \N$) and~$v_{n}$ (for~$n \in S$).  Let
$x_{n}$~and~$y_{n}$ (for $n \in \N$) be new vertices and set
\begin{align*}
  V_{0} &= \set{\ell_{n}}{\text{$n$~is even}} \cup
  \set{v_{n}}{\text{$n \in S$ is odd}} \cup \set{x_{n}}{n \in \N}, \\
  V_{1} &= \set{\ell_{n}}{\text{$n$~is odd}} \cup \set{v_{n}}{\text{$n
      \in S$ is even}} \cup \set{y_{n}}{n \in \N}.
\end{align*}
Let $N_{S}$~be the bipartite graph with vertex set~$V_{0} \cup V_{1}$,
partition relation $(V_{0} \times V_{0}) \cup (V_{1} \times V_{1})$
and edges consisting of all edges present in~$L_{S}$, together with an
edge between each~$x_{n}$ and every vertex in~$V_{1}$ and an edge
between each~$y_{n}$ and every vertex in~$V_{0}$.

Let $f$~be any automorphism of~$N_{S}$.  Since the $x_{n}$~and~$y_{n}$
are the only vertices adjacent to all vertices in the other part of
the partition, either $f$~fixes the parts and then must permute
the~$x_{n}$ and permute the~$y_{n}$, or $f$~interchanges the parts and
then it maps the~$x_{n}$ to the~$y_{n}$ and \emph{vice versa}.
Therefore $f$~induces an automorphism of the bipartite
graph~$L_{S}$.  Since $\Aut L_{S} = \1$, we conclude that $f$~actually
does fix the parts of the partition and simply permutes the~$x_{n}$
and permutes the~$y_{n}$.  Hence $\Aut N_{S} \cong (\Sym \N)^{2}$.
Similarly $N_{S} \cong N_{T}$ if and only if $S = T$.

Now let $\Gamma$~be an arbitrary countable (undirected and not
necessarily bipartite) graph.  Apply
Theorem~\ref{thm:bipartite-automs} to construct a countable bipartite
graph~$\Lambda$ satisfying $\Aut \Lambda \cong \Aut \Gamma$.  Let
$S_{n}$, for $n \in \N$, be a sequence of distinct subsets of~$\N
\setminus \{0,1\}$ such that the bipartite graph~$N_{S_{n}}$ is not
isomorphic to any connected component of~$\Lambda$.  (Indeed, note
that the~$\Lambda$ occurring in Theorem~\ref{thm:bipartite-automs} is
connected, so we simply require $N_{S_{n}} \not\cong \Lambda$.)
Define $\Gamma_{0}^{\ast} = \Lambda^{\ddagger}$ (the bipartite
complement of~$\Lambda$, as described above).  Then, assuming that the
bipartite graph $\Gamma_{n}^{\ast}$~has been defined with partition
relation~$(W_{0} \times W_{0}) \cup (W_{1} \times W_{1})$, enumerate
the finite subsets of~$W_{0}$ as~$(A_{i})_{i \in \N}$ and the finite
subsets of~$W_{1}$ as~$(B_{i})_{i \in \N}$.  Let the vertices
of~$\Gamma_{n+1}^{\ast}$ be the union of the vertices
of~$\Gamma_{n}^{\ast}$, the vertices of~$L_{S_{n}}$ and new
vertices~$\set{x_{i}^{(n)},y_{i}^{(n)}}{i \in \N}$.  Define the edges
of~$\Gamma_{n+1}^{\ast}$ to be the edges of~$\Gamma_{n}^{\ast}$
together with edges between $a$~and~$y_{i}^{(n)}$ for all $a \in
A_{i}$ and between $b$~and~$x_{i}^{(n)}$ for all $b \in B_{i}$.  The
partition relation on~$\Gamma_{n+1}^{\ast}$ is the one that groups
together all the vertices in~$W_{0}$ with all the~$x_{i}^{(n)}$ and
all the vertices in~$W_{1}$ with the~$y_{i}^{(n)}$.  Having
constructed the bipartite graphs~$\Gamma_{n}^{\ast}$, we let
$\Gamma^{\ast} = (V^{\ast},E^{\ast},P^{\ast})$ be the limit of this
sequence of graphs.  By construction, $\Gamma^{\ast}$~is existentially
closed and therefore isomorphic to the countable universal bipartite
graph~$B$.

Now let $\Gamma_{0} = (V^{\ast},E_{0},P^{\ast})$ be the bipartite
graph whose edges are all possible edges permitted by the bipartite
relation~$P^{\ast}$, except the following are \emph{not} included:
\begin{enumerate}
\item the edges in~$\Lambda$;
\item for each~$n \in \N$, all edges between an~$x_{i}^{(n)}$ and
  a~$y_{j}^{(n)}$;
\item for each~$n \in \N$, the edges in~$L_{S_{n}}$;
\item for each~$n \in \N$, all (permitted) edges between a vertex
  in~$L_{S_{n}}$ and a vertex $x_{i}^{(n)}$~or~$y_{i}^{(n)}$.
\end{enumerate}
As in previous sections, we have arranged that $E^{\ast} \subseteq
E_{0}$.  Therefore $\Gamma_{0}$~is algebraically closed and we use
this when applying Proposition~\ref{prop:B-Schutz}.  The endomorphism
$f \colon B \to B$ is not regular since $E_{0} \neq E^{\ast}$.

The bipartite complement~$\Gamma_{0}^{\ddagger}$ is the disjoint union
of the bipartite graphs $\Lambda$~and $N_{S_{n}}$ (for all~$n \in
\N$).  Hence
\[
\Aut \Gamma_{0} \cong \Aut \Lambda \times \prod_{n \in \N} \Aut
N_{S_{n}} \cong \Aut \Gamma \times (\Sym \N)^{\aleph_{0}}.
\]
As in the previous sections, we observe that
$\Aut(V^{\ast},E_{0},P^{\ast}) \cap \Aut(V^{\ast},E^{\ast},P^{\ast})$
is isomorphic to~$\Aut \Gamma$, and by varying~$S_{1}$ we construct
$2^{\aleph_{0}}$~many $\scD$\nbd classes of such endomorphisms~$f$.
This completes our final step in establishing the analogue of
Theorem~\ref{thm:R-uncountSchutz} for bipartite graphs.

\begin{thm}
  \label{thm:B-uncountSchutz}
  Let $\Gamma$~be any countable graph.  There are $2^{\aleph_{0}}$
  non-regular $\scD$\nbd classes of the countable universal bipartite
  graph~$B$ such that the \Schutz\ group of $\scH$\nbd classes within
  them are isomorphic to~$\Aut \Gamma$. \qed
\end{thm}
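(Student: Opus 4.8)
The plan is to assemble, in the manner of the proof of Theorem~\ref{thm:R-uncountSchutz}, the constructions laid out in the paragraphs immediately preceding the statement. First I would invoke Theorem~\ref{thm:bipartite-automs} to fix a countable connected bipartite graph~$\Lambda$ with $\Aut \Lambda \cong \Aut \Gamma$, and then choose a sequence $(S_{n})_{n \in \N}$ of pairwise distinct subsets of~$\N \setminus \{0,1\}$ such that $N_{S_{n}} \not\cong \Lambda$ for every~$n$; this is possible because $\Lambda$~has only countably many connected components while there are $2^{\aleph_{0}}$~available subsets and $N_{S} \cong N_{T}$ if and only if $S = T$. Running the stage-by-stage construction described above with $\Gamma_{0}^{\ast} = \Lambda^{\ddagger}$ yields an existentially closed bipartite graph $\Gamma^{\ast} = (V^{\ast},E^{\ast},P^{\ast}) \cong B$, and the saturated bipartite graph $\Gamma_{0} = (V^{\ast},E_{0},P^{\ast})$ satisfies $E^{\ast} \subseteq E_{0}$, so $\Gamma_{0}$~is algebraically closed. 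The endomorphism $f \colon B \to B$ realising the isomorphism $(V^{\ast},E^{\ast},P^{\ast}) \cong B$ then has $\langle Vf \rangle = \Gamma_{0} \neq \im f = (V^{\ast},E^{\ast},P^{\ast})$, hence is non-regular by Proposition~\ref{prop:reg-image}, and it is of exactly the shape required by Proposition~\ref{prop:B-Schutz}, which gives $\mathcal{S}_{H_{f}} \cong \Aut(V^{\ast},E_{0},P^{\ast}) \cap \Aut(V^{\ast},E^{\ast},P^{\ast})$.

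The substantive step is to identify this intersection with~$\Aut \Gamma$. Since $\Gamma_{0}^{\ddagger}$~is the bipartite disjoint union of~$\Lambda$ and the~$N_{S_{n}}$ ($n \in \N$), one has $\Aut \Gamma_{0} \cong \Aut \Lambda \times \prod_{n} \Aut N_{S_{n}} \cong \Aut \Gamma \times (\Sym \N)^{\aleph_{0}}$. Thus any~$g$ lying in both automorphism groups restricts to an automorphism of~$\Lambda$, fixes each vertex of every~$L_{S_{n}}$ (because $\Aut L_{S_{n}} = \1$), and permutes each set $\{x_{i}^{(n)}, y_{i}^{(n)} : i \in \N\}$ appropriately; I would then check, exactly as in the proof of Theorem~\ref{thm:R-uncountSchutz}, that compatibility with the edges of~$E^{\ast}$ — the edges joining the new vertices $x_{i}^{(n)}$,~$y_{i}^{(n)}$ to the enumerated finite sets $A_{i}$,~$B_{i}$ — forces these permutations to be uniquely determined by the induced automorphism of~$\Lambda$, and conversely that every automorphism of~$\Lambda$ does extend to such a~$g$. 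Hence $g \mapsto g|_{V(\Lambda)}$ is an isomorphism $\Aut(V^{\ast},E_{0},P^{\ast}) \cap \Aut(V^{\ast},E^{\ast},P^{\ast}) \to \Aut \Lambda \cong \Aut \Gamma$.

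Finally I would vary~$S_{1}$ over its $2^{\aleph_{0}}$~admissible values, keeping $S_{n}$ for $n \geq 2$ fixed. Each choice produces a non-regular injective endomorphism~$f_{S_{1}}$ of~$B$ with $\mathcal{S}_{H_{f_{S_{1}}}} \cong \Aut \Gamma$ and with $\langle Vf_{S_{1}} \rangle^{\ddagger} = \Gamma_{0}^{\ddagger}$ having $N_{S_{1}}$ as a connected summand; since $N_{S_{1}} \not\cong N_{S'_{1}}$ whenever $S_{1} \neq S'_{1}$, the images $\langle Vf_{S_{1}} \rangle$ are pairwise non-isomorphic, so $f_{S_{1}}$ and $f_{S'_{1}}$ lie in distinct $\scD$\nbd classes by Lemma~\ref{lem:class-facts}(iii). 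All of these $\scD$\nbd classes are non-regular, and every $\scH$\nbd class within each has \Schutz\ group isomorphic to~$\Aut \Gamma$, which is the assertion. I expect the main obstacle to be the verification in the second paragraph that the two automorphism groups meet in precisely~$\Aut \Gamma$, the delicate point being to keep the bipartite partition relation~$P^{\ast}$ under control and to ensure that the finite subsets enumerated at each stage sit in the intended part of the partition; everything else is a direct transcription of the undirected argument.
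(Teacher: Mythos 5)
Your proposal reproduces the paper's own argument essentially step for step: the same choice of $\Lambda$ via Theorem~\ref{thm:bipartite-automs}, the same construction of $\Gamma^{\ast}$ and the saturated graph $\Gamma_{0}$ from the graphs $N_{S_{n}}$, the same application of Proposition~\ref{prop:B-Schutz} together with the computation $\Aut\Gamma_{0}\cong\Aut\Gamma\times(\Sym\N)^{\aleph_{0}}$ and the rigidity of the $x_{i}^{(n)}$, $y_{i}^{(n)}$ forced by the $E^{\ast}$-edges, and the same variation of $S_{1}$ with Lemma~\ref{lem:class-facts}(iii) to separate the $\scD$\nbd classes. This is correct and is the paper's proof.
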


\paragraph{Acknowledgements:} Igor Dolinka acknowledges the support of
Grant No.\ 17409 of the Ministry of Education, Science, and
Technological Development of the Republic of Serbia, Grant
No.\ 1136/2014 of the Secretariat of Science and Technological
Development of the Autonomous Province of Vojvodina.  Robert Gray was
supported by an EPSRC Postdoctoral Fellowship EP/E043194/1 held at the
School of Mathematics and Statistics, University of St Andrews.
Jillian McPhee was funded by an EPSRC Doctoral Training Grant.  Martyn
Quick acknowledges support by EPSRC grant EP/H011978/1.  The authors
also thank the anonymous referee for their careful reading of the
paper and helpful comments.

\end{document}